\newtheorem{theorem}{Theorem}[section]
\newtheorem{cor}[theorem]{Corollary}
\newtheorem{lma}[theorem]{Lemma}
\newtheorem*{question}{Question}
\newtheorem{corollary}[theorem]{Corollary}
\newtheorem{lemma}[theorem]{Lemma}
\theoremstyle{definition}
\theoremstyle{remark}
\newtheorem{remark}[theorem]{Remark}
\newtheorem{rmk}[theorem]{Remark}
\numberwithin{equation}{section}
\newcommand{\R}{{\mathbb{R}}}
\newcommand{\C}{{\mathbb{C}}}
\newcommand{\Z}{{\mathbb{Z}}}
\newcommand{\bo}{\boldsymbol}
\newcommand{\CZ}{\operatorname{CZ}}
\newcommand{\Hom}{\operatorname{Hom}}
\newcommand{\ev}{\operatorname{ev}}
\newcommand{\pa}{\partial}
\newcommand{\p}{\partial}
\newcommand{\ind}{\operatorname{index}}
\newcommand{\coker}{\operatorname{coker}}
\begin{document}

\title{Symplectic and contact differential graded algebras}

\author{Tobias Ekholm}
\address{Uppsala University, Box 480, 751 06 Uppsala, Sweden, and Institute Mittag-Leffler, Aurav 17, 182 60 Djursholm, Sweden}
\email{tobias.ekholm@math.uu.se}
\thanks{T.E. partially funded by the Knut and Alice Wallenberg Foundation as a
Wallenberg scholar and by the Swedish Research Council, 2012-2365}
\author{Alexandru Oancea}
\address{Sorbonne Universit\'es, UPMC Univ Paris 06, UMR 7586, Institut de 
Math\'e\-ma\-tiques de Jussieu-Paris Rive Gauche, Case 247, 4 place Jussieu, F-75005, Paris, France}
\email{alexandru.oancea@imj-prg.fr}
\thanks{A.O. partially funded by the European Research Council, StG-259118-STEIN}

\date{June 2, 2015}

\begin{abstract}
We define Hamiltonian simplex differential graded algebras (DGA) with differentials that deform the high energy symplectic homology differential and wrapped Floer homology differential in the cases of closed and open strings in a Weinstein manifold, respectively. The order $m$ term in the differential is induced by varying natural degree $m$ co-products over an $(m-1)$-simplex, where the operations near the boundary of the simplex are trivial. We show that the Hamiltonian simplex DGA is quasi-isomorphic to the (non-equivariant) contact homology algebra and to the Legendrian homology algebra of the ideal boundary in the closed and open string cases, respectively.    
\end{abstract}

\maketitle


\section{Introduction}\label{S:intr}
Let $X$ be a Weinstein manifold and let $L\subset X$ be an exact Lagrangian submanifold. (We use the terminology of~\cite{Cieliebak-Eliashberg} for Weinstein manifolds, cobordisms etc.~throughout the paper.) Assume that $(X,L)$ is cylindrical at infinity meaning that outside a compact set $(X,L)$ looks like $([0,\infty)\times Y,[0,\infty)\times\Lambda)$, where $Y$ is a contact manifold, $\Lambda\subset Y$ a Legendrian submanifold, and the Liouville form on $[0,\infty)\times Y$ is the symplectization form $e^{t}\alpha$, for $\alpha$ a contact form on $Y$ and $t$ the standard coordinate in $[0,\infty)$.

There are a number of Floer homological theories associated to this geometric situation. For example there is \emph{symplectic homology} $SH(X)$ which can be defined~\cite{Viterbo99,Seidel_biased,BOauto} using a time-dependent Hamiltonian $H\colon X\times I\to\R$, $I=[0,1]$ which is a small perturbation of a time independent Hamiltonian that equals a small positive constant in the compact part of $X$ and is linearly increasing of certain slope in the coordinate $r=e^{t}$ in the cylindrical end at infinity, and then taking a certain limit over increasing slopes. The chain complex underlying $SH(X)$ is denoted $SC(X)$ and is generated by the $1$-periodic orbits of the Hamiltonian vector field $X_{H}$ of $H$, graded by their Conley-Zehnder indices. These fall into two classes: low energy orbits in the compact part of $X$ and (reparameterizations of) Reeb orbits of $\alpha$ in the region in the end where $H$ increases from a function that is close to zero to a function of linear growth. The differential counts Floer holomorphic cylinders interpolating between the orbits. These are solutions $u\colon \R\times S^{1}\to X$, $S^{1}=I/\partial I$ of the Floer equation
\begin{equation}\label{eq:Floer}
(du-X_{H}\otimes dt)^{0,1}=0,
\end{equation}  
where $s+it\in \R\times S^{1}$ is a standard complex coordinate and the complex anti-linear part is taken with respect to a chosen adapted almost complex structure $J$ on $X$. The $1$-periodic orbits of $H$ are closed loops that are critical points of an action functional, and cylinders solving \eqref{eq:Floer} are similar to instantons that capture the effect of tunneling between critical points. Because of this and analogies with (topological) string theory, we say that symplectic homology is a \emph{theory of closed strings}.

The open string analogue of $SH(X)$ is a corresponding theory for paths with endpoints in the Lagrangian submanifold $L\subset X$. It is called the \emph{wrapped Floer homology} of $L$ and here denoted $SH(L)$. Its underlying chain complex $SC(L)$ is generated by Hamiltonian time $1$ chords that begin and end on $L$, graded by a Maslov index. Again these fall into two classes: high energy chords that correspond to Reeb chords of the ideal Legendrian boundary $\Lambda$ of $L$ and low energy chords that correspond to critical points of $H$ restricted to $L$. The differential on $SC(L)$ counts Floer holomorphic strips with boundary on $L$ interpolating between Hamiltonian chords, i.e.~solutions 
\[  
u\colon\left(\R\times I,\partial(\R\times I)\right)\to (X,L) 
\]
of \eqref{eq:Floer}.

We will also consider a mixed version of open and closed strings. The graded vector space underlying the chain complex is simply $SC(X,L)=SC(X)\oplus SC(L)$, and the  differential $d_{1}\colon SC(X,L)\to SC(X,L)$ has the following matrix form with respect to this decomposition (subscripts ``$\mathrm{c}$'' and ``$\mathrm{o}$'' refer to \emph{closed} and \emph{open}, respectively):
\[
d_1=\left(\begin{matrix}d_{\mathrm{cc}} & d_{\mathrm{oc}}\\ 0 & d_{\mathrm{oo}} \end{matrix}\right).
\]
Here $d_{\mathrm{cc}}$ and $d_{\mathrm{oo}}$ are the differentials on $SC(X)$ and $SC(L)$, respectively, and $d_{\mathrm{oc}}\colon SC(L)\to SC(X)$ is a chain map of degree $-1$.  Each of these three maps counts solutions of \eqref{eq:Floer} on a Riemann surface with two punctures, one positive regarded as input, and one negative regarded as output. For $d_{\mathrm{cc}}$ the underlying Riemann surface is the cylinder, for $d_{\mathrm{oo}}$ the underlying Riemann surface is the strip, and for $d_{\mathrm{oc}}$ the underlying Riemann surface is the cylinder $\R\times S^{1}$ with a slit at $[0,\infty)\times \{1\}$ (or equivalently, a disk with two boundary punctures, a sphere with two interior punctures, and a disk with positive boundary puncture and negative interior puncture).  We will denote the corresponding homology $SH(X,L)$. 

In order to count the curves in the differential over integers we use index bundles to orient solution spaces and for that we assume that the pair $(X,L)$ is relatively spin, see \cite{FO3}. 
As the differential counts Floer-holomorphic curves, it respects the energy filtration and the subspace generated by the low energy chords and orbits is a subcomplex. We denote the corresponding high energy quotient $SC^{+}(X,L)$ and its homology $SH^{+}(X,L)$. We define similarly $SC^{+}(X)$, $SC^{+}(L)$, $SH^{+}(X)$, and $SH^{+}(L)$.

In the context of Floer homology, the cylinders and strips above are the most basic Riemann surfaces, and it is well-known that more complicated Riemann surfaces $\Sigma$ can be included in the theory as follows, see~\cite{Seidel_biased, Ritter}. Pick a family of 1-forms $B$ with values in Hamiltonian vector fields on $X$ over the appropriate Deligne-Mumford space of domains and count rigid solutions of the Floer equation
\begin{equation}\label{eq:floereqintr}
(du-B)^{0,1}=0,
\end{equation} 
where $B(s+it)= X_{H_t}\otimes dt$ in cylindrical coordinates $s+it$ near the punctures of $\Sigma$.
The resulting operation descends to homology as a consequence of gluing and Gromov-Floer compactness. A key condition for solutions of \eqref{eq:floereqintr} to have relevant compactness properties is that $B$ is required to be non-positive in the following sense.
For each $x\in X$ we get a 1-form on $\Sigma$ with values in $T_{x}X$, $B(x)=X_{H_{t}^{z}}(x)\otimes\beta$, where $H_{t}^{z}\colon X\to\R$, $t\in I$ is a family of time dependent Hamiltonian functions parameterized by $z\in\Sigma$ and $\beta$ is a 1-form on $\Sigma$. The non-positivity condition is then that the 2-form associated to $B$, $d(H_{t}^{z}(x)\,\beta)$, is a non-positive multiple of the area form on $\Sigma$ for each $x\in X$. 

The most important such operations on $SH(X)$ are the BV-operator and the pair-of-pants product. The BV-operator corresponds to solutions of a parameterized Floer equation analogous to \eqref{eq:Floer} which twists the cylinder one full turn. The pair-of-pants product corresponds to a sphere with two positive and one negative puncture and restricts to the cup product on the ordinary cohomology of $X$, which here appears as the low energy part of $SH(X)$. Analogously, on $SH(L)$ the product corresponding to the disk with two positive and one negative boundary puncture restricts to the cup product on the cohomology of $L$, and the disk with one positive interior puncture and two boundary punctures of opposite signs expresses $SH(L)$ as a module over $SH(X)$. 

Seidel~\cite{Seidel_biased} showed that such operations are often trivial on $SH^{+}(X)$. Basic examples of this phenomenon are the operations $D_m$ given by disks and spheres with one positive and $m\ge 2$ negative punctures. By pinching the $1$-form $B$ in \eqref{eq:floereqintr} in the cylindrical end at one of the $m$ negative punctures, it follows that up to homotopy $D_m$ factors through the low energy part of the complex $SC(X,L)$. In particular, on the high energy quotient $SC^{+}(X,L)$ the operation is trivial if the $1$-form is pinched near at least one negative puncture. 

The starting point for this paper is to study operations $d_m$ that are associated to natural families of forms $B$ that interpolate between all ways of pinching near negative punctures. More precisely, for disks and spheres with one positive and $m$ negative punctures, we take $B$ in \eqref{eq:floereqintr} to have the form $B=X_{H}\otimes w_j\,dt$ in the cylindrical end, with coordinate $s+it$ in $[0,\infty)\times I$ for open strings and in $[0,\infty)\times S^{1}$ for closed strings, near the $j^{\rm th}$ puncture. Here $w_j$ is a positive function with a minimal value called \emph{weight}. By Stokes' theorem, in order for $B$ to satisfy the non-positivity condition, the sum of weights at the negative ends must be greater than the weight at the positive end. Thus the choice of $1$-form is effectively parameterized by an $(m-1)$-simplex and the equation~\eqref{eq:floereqintr} associated to a form which lies in a small neighborhood of the boundary of the simplex, where at least one weight is very small, has no solutions with all negative punctures at high energy chords or orbits. The operation $d_m$ is then defined by counting rigid solutions of \eqref{eq:floereqintr} where $B$ varies over the simplex bundle. Equivalently, we count solutions with only high energy asymptotes in the class dual to the fundamental class of the sphere bundle over Deligne-Mumford space obtained as the quotient space after fiberwise identification of the boundary of the simplex to a point. In particular, curves contributing to $d_m$ have formal dimension $-(m-1)$.  

Our first result says that the operations $d_m$ combine to give a DGA differential. The \emph{Hamiltonian simplex DGA}  $\mathcal{SC}^{+}(X,L)$ is the unital algebra generated by the generators of $SC^{+}(X,L)$ with grading shifted down by $1$, where orbits sign commute with orbits and chords but where chords do not commute. Let $d\colon \mathcal{SC}^{+}(X,L)\to\mathcal{SC}^{+}(X,L)$ be the map defined on generators $b$ by
\[ 
d\, b=d_1b+d_2b+\dots+d_mb+\cdots,
\]            
and extend it by the Leibniz rule.

\begin{theorem}\label{t:2ndary}
The map $d$ is a differential, $d\circ d=0$, and the homotopy type of the Hamiltonian simplex DGA $\mathcal{SC}^{+}(X,L)$ depends only on $(X,L)$. Furthermore, $\mathcal{SC}^{+}(X,L)$ is functorial in the following sense. 
If $(X_0,L_0)=(X,L)$, if $(X_{10},L_{10})$ is a Weinstein cobordism with negative end $(\partial X_0,\partial L_0)$, and if $(X_1,L_1)$ denotes the Weinstein manifold obtained by gluing $(X_{10},L_{10})$ to $(X_{0},L_{0})$, then there is a DGA map
\[
\Phi_{X_{10}}\colon \mathcal{SC}^{+}(X_1,L_1)\to\mathcal{SC}^{+}(X_0,L_0),
\]
and the homotopy class of this map is an invariant of $(X_{10},L_{10})$ up to Weinstein homotopy.  
\end{theorem}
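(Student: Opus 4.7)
The plan is to deduce all three statements from boundary analyses of simplex-parametrized moduli spaces of solutions of \eqref{eq:floereqintr}, with the crucial input that the operations are trivial near the boundary of the weight simplex.

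First I would establish $d\circ d=0$. Fix $m\geq 1$ and consider the compactified moduli space $\overline{\M}_m$ of solutions of \eqref{eq:floereqintr} with one positive puncture, $m$ high-energy negative asymptotes, and weights running over the $(m-1)$-simplex. The operation $d_m$ counts the zero-dimensional part of $\overline{\M}_m$, and so $d\circ d=0$ is obtained by identifying the codimension-one boundary of its one-dimensional component. This boundary has two sources. The first is the part lying over the boundary of the weight simplex: by the non-positivity construction, pinching the $B$-form near any negative puncture forces that asymptote to be at a low-energy orbit or chord, and these strata contribute nothing in the high-energy quotient $SC^+$. The second is SFT-type breaking at a Reeb orbit or chord, splitting a curve into two levels; grouping these breakings by how the $m$ negative punctures are distributed between the two levels produces the master equation
\[
\sum_{k+l=m+1} d_k\bullet d_l = 0
\]
on each generator, where $\bullet$ denotes the composition extended by the Leibniz rule on $\mathcal{SC}^+(X,L)$. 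The product decomposition of weight simplices at a broken configuration is consistent because the intermediate Reeb weight is a free parameter, so that the $(k-1)$- and $(l-1)$-subsimplices fit into a codimension-one face of an $(m-1)$-simplex. Summing over $m$ then gives exactly $d\circ d=0$.

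Second, I would handle homotopy invariance. Given two sets of defining data, I would construct a continuation DGA morphism by counting solutions of an $s$-dependent simplex-parametrized Floer equation with the same asymptotic set-up, for every arity $m\geq 1$. The same boundary analysis applies: the simplex-boundary stratum is trivial, and SFT breaking yields the DGA chain map identity. A two-parameter family then produces a DGA chain homotopy between any two such continuation maps, and composition with a reverse continuation shows that each is a DGA quasi-isomorphism. Functoriality for a Weinstein cobordism $(X_{10},L_{10})$ is obtained by applying the same mechanism to its completion: count solutions with positive asymptote in the top end, high-energy negative asymptotes in the bottom end, and non-positive $B$-data over the weight simplex; define $\Phi_{X_{10}}$ on generators and extend by the Leibniz rule. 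The intertwining relation $\Phi_{X_{10}}\circ d=d\circ\Phi_{X_{10}}$ and Weinstein-homotopy invariance of the homotopy class of $\Phi_{X_{10}}$ are the corresponding parametric boundary identities.

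The hard part will be the inductive construction of coherent families of $B$-forms, parametrized by fiber products of Deligne--Mumford spaces with weight simplices (and their continuation and cobordism analogues), so that non-positivity is preserved everywhere and the gluing identities match along every codimension-one stratum. This is the same bookkeeping problem that arises in SFT and in the wrapped Fukaya category formalism, and it can be resolved by induction on arity and on moduli strata. Signs in the master equation are controlled by the coherent orientations from the relative spin assumption on $(X,L)$, following \cite{FO3}. Once these ingredients are in place, the three statements of the theorem reduce to the boundary counts sketched above.
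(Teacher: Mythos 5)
Your account of $d\circ d = 0$ and of the cobordism chain-map identity matches the paper's approach: Lemma~\ref{l:nohigh} (via Lemma~\ref{l:lowenergy}) shows the simplex-boundary strata contribute nothing, and Theorem~\ref{thm:mdli} identifies the remaining codimension-one boundary with the broken configurations that produce the master equation. That part is fine, modulo a small slip: when a curve breaks into a $k$-ended and an $l$-ended piece with $k+l = m+1$, the product $\Delta^{k-1}\times\Delta^{l-1}$ has dimension $m-1$, the \emph{same} as $\Delta^{m-1}$, not codimension one inside it; the codimension-one-ness comes from the gluing parameter of the moduli space, not the simplex, and the consistency is encoded by the component restriction maps of Section~\ref{sec:simplexbundle} (which are in effect diffeomorphisms $\Delta^{m-1}\cong\Delta^{k-1}\times\Delta^{l-1}$ over the broken stratum).

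The genuine gap is in your claim that ``a two-parameter family then produces a DGA chain homotopy between any two such continuation maps.'' This is precisely the naive argument that the paper explains \emph{does not work} in the DGA setting, and the bulk of Section~\ref{s:hmtpyofcobmaps} is devoted to repairing it. The obstruction is a transversality problem: the chain homotopy is supposed to be generated by isolated $(-2)$-curves (formal dimension $-2$, rigid once the interpolation parameter $\tau$ and the one-parameter deformation $s$ are both free), but because a disk or sphere can have several negative punctures mapping to the \emph{same} Hamiltonian chord or orbit, a single $(-2)$-curve can be glued simultaneously at $j\ge 2$ negative ends of a $(d)$-curve, producing a multi-level configuration of formal dimension $d + j(-2+1) = d-j$, which is $-1$ when $j = d+1$ and therefore contaminates the count that was supposed to give the chain maps. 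To control this, the paper introduces an abstract perturbation scheme that time-orders the negative ends of curves with asymptotics in $C_{X_1}$, energy level by energy level (Lemma~\ref{lem:chainhomotopy_chord}), and then, because interior punctures carry no natural cyclic order, adds a homotopy-of-homotopies argument (Lemmas~\ref{l:nocross} and~\ref{l:Kchange}) to show the resulting $(-2)$-curve count $K$ is independent of the ordering. The upshot (Lemma~\ref{lem:chainhomotopy_orbit}) is that the DGA homotopy has the exponential form $\Phi_1 = \Phi_0\, e^{(K\circ d_1 - d_0\circ K)}$, not the linear $\Phi_1 - \Phi_0 = Kd_1 + d_0K$ one might naively expect for chain complexes: the operator $Kd_1 - d_0K$ acts via the Leibniz extension, and iterating it is precisely what accounts for the multiple attachments of the $(-2)$-curve. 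Your proposal assumes the standard continuation-homotopy picture carries over unchanged; it does not, and this is the hard part of the theorem.
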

If $L=\varnothing$ in Theorem \ref{t:2ndary} then we get a Hamiltonian simplex DGA $\mathcal{SC}^+(X)$ generated by high energy Hamiltonian orbits. This DGA is (graded) commutative. Also, the quotient $\mathcal{SC}^+(L)$ of $\mathcal{SC}^+(X,L)$ by the ideal generated by orbits is a Hamiltonian simplex DGA generated by high energy chords of $L$. We write $\mathcal{SH}^+(X,L)$ for the homology DGA of $\mathcal{SC}^+(X,L)$, and use the notation $\mathcal{SH}^{+}(X)$ and $\mathcal{SH}^+(L)$ with a similar meaning. If $X$ is the cotangent bundle of a manifold $X=T^{\ast}M$ then $SH(X)$ is isomorphic to the homology of the free loop space of $M$, see \cite{Viterbo-loop,SW,AS,Abouzaid-cotangent}, and the counterpart of $d_2$ in string topology is non-trivial, see~\cite{Goresky_Hingston}. 

Our second result expresses $\mathcal{SC}^{+}(X,L)$ in terms of the ideal boundary $(Y,\Lambda)=(\partial X,\partial L)$. Recall that the usual contact homology DGA $\widetilde{\mathcal{A}}(Y,\Lambda)$ is generated by closed Reeb orbits in $Y$ and by Reeb chords with endpoints on $\Lambda$, see \cite{EGH}. Here we use the differential that is naturally augmented by rigid once-punctured spheres in $X$ and by rigid once-boundary punctured disks in $X$ with boundary in $L$. (In the terminology of \cite{BEE} the differential counts anchored spheres and disks). 
In~\cite{Bourgeois_Oancea_sequence} a non-equivariant version of linearized orbit contact homology was introduced. In Section~\ref{sec:CHSH} we extend this construction and define a non-equivariant DGA that we call $\mathcal{A}(Y,\Lambda)$, which is generated by decorated Reeb orbits and by Reeb chords. We give two definitions of the differential on $\mathcal{A}(Y,\Lambda)$, one using Morse-Bott curves and one using curves holomorphic with respect to a domain dependent almost complex structure. In analogy with the algebras considered above we write $\mathcal{A}(Y)$ for the subalgebra generated by decorated orbits and $\mathcal{A}(\Lambda)$ for the quotient by the ideal generated by decorated orbits. 
 
In Sections \ref{sec:hamilt1form} and \ref{sec:nonequivDGA} we introduce a continuous $1$-parameter deformation of the simplex family of $1$-forms $B$ that turns off the Hamiltonian term in \eqref{eq:floereqintr} by sliding its support to the negative end in the domains of the curves and that leads to the following result. 

\begin{theorem} \label{thm:q-iso}
The deformation that turns the Hamiltonian term off gives rise to a DGA map
\[
\Phi\colon \mathcal{A}(Y,\Lambda)\to\mathcal{SC}^{+}(X,L).
\]
The map $\Phi$ is a quasi-isomorphism that takes the orbit subalgebra $\mathcal{A}(Y)$ quasi-isomorphically to the orbit subalgebra $\mathcal{SC}^{+}(X)$. Furthermore, it descends to the quotient $\mathcal{A}(\Lambda)$ and maps it to $\mathcal{SC}^{+}(L)$ as a quasi-isomorphism.
\end{theorem}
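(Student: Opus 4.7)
My plan is to build $\Phi$ by a parameterized moduli-space argument combining the simplex-family technology of Theorem~\ref{t:2ndary} with SFT-style neck stretching. Introduce a two-parameter family of 1-forms $B_{\tau,\mathbf{w}}$ on the punctured domains, with $\tau\in[0,1]$ and $\mathbf{w}$ in the weight simplex, such that at $\tau=0$ one recovers the family defining $d_m$, while at $\tau=1$ the Hamiltonian term is supported only in arbitrarily small cylindrical-end neighborhoods of the negative punctures. For a contact generator $c$ of $\mathcal{A}(Y,\Lambda)$ and a word $b=b_1\cdots b_m$ of high-energy Hamiltonian generators, the coefficient of $b$ in $\Phi(c)$ counts rigid solutions of~\eqref{eq:floereqintr} at $\tau=1$ with positive Reeb asymptote $c$ (the curve being purely holomorphic near that puncture) and negative Hamiltonian asymptotes $b_1,\ldots,b_m$. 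Such a solution is a two-level building: an upper-level holomorphic curve in the symplectization of $(Y,\Lambda)$, anchored to $X$ at each negative Reeb end by a rigid Floer half-cylinder or half-strip joining the Reeb asymptote to a high-energy Hamiltonian generator. This extends the non-equivariant comparison of~\cite{Bourgeois_Oancea_sequence} from the linear setting to the DGA setting. Extend $\Phi$ from generators to all of $\mathcal{A}(Y,\Lambda)$ as an algebra map.

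The chain-map identity $\Phi\circ d^{\mathcal{A}}=d\circ\Phi$ follows from analyzing codimension-one boundary strata of one-dimensional components of the parameterized moduli space over $\tau\in[0,1]$ together with the weight simplex. Four stratum types appear: SFT-breaking in the upper symplectization level produces $\Phi\circ d^{\mathcal{A}}$; Floer-breaking at a negative puncture in the lower Hamiltonian part produces $d\circ\Phi$ via the Leibniz rule; weight-simplex boundary strata vanish because they pinch the 1-form in the cylindrical end of at least one negative puncture, forcing that asymptote into the low-energy regime and hence to zero in $SC^+$; and the $\tau=0$ endpoint contributes no rigid solution compatible with a pure holomorphic positive asymptote. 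Coherent orientations give the signs, and the multi-output nature of the count gives the DGA-map property formally.

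For quasi-isomorphism, filter both DGAs by a common action filtration: total Reeb period on $\mathcal{A}(Y,\Lambda)$ and Hamiltonian action on $\mathcal{SC}^+(X,L)$, noting that on high-energy generators these agree up to controlled error in the slope of $H$. The standard energy inequality for~\eqref{eq:floereqintr} shows $\Phi$ respects this filtration. On the associated graded, $\Phi$ reduces on each single Reeb generator to the anchored continuation map of~\cite{Bourgeois_Oancea_sequence}, matching a Reeb orbit/chord with its reparameterization as a high-energy Hamiltonian orbit/chord; a spectral sequence comparison then promotes this to a quasi-isomorphism of the full DGAs. The orbit subalgebra and Legendrian quotient statements are topological: a curve whose domain is a punctured sphere cannot carry chord asymptotes at all, so $\Phi$ restricts to $\mathcal{A}(Y)\to\mathcal{SC}^+(X)$, and it descends to the quotient by the orbit ideal, giving $\mathcal{A}(\Lambda)\to\mathcal{SC}^+(L)$; the filtered comparison applies verbatim to both.

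The principal technical obstacle is combined Gromov--Floer--SFT compactness and transversality for the parameterized moduli spaces as $\tau\to 1$. One must show that the only limiting configurations are precisely the anchored two-level buildings described above, handle multiply covered orbits in the symplectization level using the decoration and the Morse--Bott or domain-dependent almost complex structure of Section~\ref{sec:CHSH}, and verify that no unexpected codimension-one degenerations occur at weight-simplex or $\tau$-endpoint limits. Once these analytic points are settled, together with coherent orientations in the non-equivariant framework, the remainder of the argument is structurally clean and parallels the proof of Theorem~\ref{t:2ndary}.
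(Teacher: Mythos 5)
Your overall strategy (interpolation family, parameterized boundary analysis, filtration argument) is the correct one and parallels the paper's Section~\ref{sec:CHSH}, but there is a definitional inconsistency that needs to be fixed, and the filtration argument can be made simpler and sharper.

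The definitional problem: you say the coefficient of $b$ in $\Phi(c)$ ``counts rigid solutions of \eqref{eq:floereqintr} at $\tau=1$'' and that ``such a solution is a two-level building.'' Neither of these is quite right and they are mutually inconsistent with your chain-map analysis. The paper's map (there denoted $\Psi$, see Section~\ref{sec:nonequivDGA}) counts isolated elements of $\mathcal{F}_{\R}(a;\mathbf{b},\bo{\eta})$, i.e.~solutions of the interpolated Floer equation for the \emph{full} family $\tau\in(0,1)$ with $H_1=0$; the degree condition is $|a|=|\mathbf{b}|+|\bo{\eta}|$ precisely because the extra $\tau$-parameter is included. Counting at a single fixed $\tau$ would give rigid solutions one index lower ($\dim\mathcal{F}_\tau=0$ forces $|a|-|\mathbf{b}|-|\bo{\eta}|=1$), so the resulting map would have the wrong degree. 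And the anchored two-level buildings you describe do not constitute the solution set at any $\tau$: they appear only in the Gromov--Floer--SFT compactification as codimension-one boundary strata (this is the content of Theorem~\ref{thm:mdli2}). If you want a map defined directly by counting such buildings you need a separate gluing/cobordism argument to identify it with the interpolation count, plus the full SFT perturbation machinery for the symplectization level. The paper avoids all of this by working with the interpolating curves themselves; the chain-map identity $\Psi d^{\mathcal{A}}=d\Psi$ then reads off the two ends of the $1$-dimensional $\mathcal{F}_\R$ moduli spaces, which is exactly your four-stratum analysis restated cleanly.

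On the isomorphism: your spectral sequence argument is workable but is heavier than necessary and yields only quasi-isomorphism. The paper proves (Theorem~\ref{t:iso}) that $\Psi$ is a chain \emph{isomorphism}. The key input is not merely that $\Psi$ preserves the action filtration, but that the leading term is literally $\pm 1$ on each generator: the trivial Reeb chord strip or Reeb orbit cylinder, cut off in the $r$-slice where the corresponding Hamiltonian chord/orbit lives, is the unique contributing curve with no action drop. Hence $\Psi$, restricted to generators up to any fixed action, is upper triangular with $\pm 1$'s on the diagonal, so it is invertible as a map of algebras (and in particular a quasi-isomorphism). No spectral sequence comparison is needed, and the statement about $\mathcal{A}(Y)\to\mathcal{SC}^+(X)$ and $\mathcal{A}(\Lambda)\to\mathcal{SC}^+(L)$ follows for free, since the chain isomorphism respects the decomposition into closed/open generators -- your observation about punctured spheres carrying no chord asymptotes is correct and is exactly why $\Psi$ restricts and descends.
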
   

The usual (equivariant) contact homology DGA $\widetilde{\mathcal{A}}(Y,\Lambda)$ is also quasi-isomorphic to a Hamiltonian simplex DGA that corresponds to a version of symplectic homology defined by a time independent Hamiltonian, see Theorem \ref{t:isoequiv}. For the corresponding result on the linear level, see~\cite{BOcontact-equivariant}.

\begin{rmk} \label{rmk:perturbations}
As is well-known, the constructions of the DGAs $\widetilde{\mathcal{A}}(Y,\Lambda)$ and ${\mathcal{A}}(Y,\Lambda)$, of the orbit augmentation induced by $X$, and of symplectic homology for time independent Hamiltonians require the use of abstract perturbations for the pseudo-holomorphic curve equation in a manifold with cylindrical end. This is an area where much current research is being done and there are several approaches, some of an analytical character, see e.g.~\cite{HWZ,HWZ-book1}, others of more algebraic topological flavor, see e.g.~\cite{Pardon}, and others of more geometric flavor, see e.g.~\cite{FO3}. Here we will not enter into the details of this problem but merely assume such a perturbation scheme has been fixed. Our results are independent of the nature of the perturbation scheme and use only the weakest form of it that allows us to count rigid curves over the rationals. In fact, in this spirit, Theorem~\ref{thm:q-iso} can be interpreted as an alternative definition of the (non-equivariant) contact homology DGA $\mathcal{A}(Y,\Lambda)$ that does not involve abstract perturbations. 
\end{rmk}

Theorem \ref{thm:q-iso} relates Symplectic Field Theory (SFT) and Hamiltonian Floer Theory. On the linear level the relation is rather direct, see \cite{Bourgeois_Oancea_sequence}, but not for the SFT DGA. The first candidate for a counterpart on the Hamiltonian Floer side collects the standard co-products to a DGA-differential, but that DGA is trivial by pinching. To see that recall the sphere bundle over Deligne-Mumford space obtained by identifying the boundary points in each fiber of the simplex bundle. The co-product DGA then corresponds to counting curves lying over the homology class of a point in each fiber, but that point can be chosen as the base point where all operations are trivial. The object that is actually isomorphic to the SFT DGA is the Hamiltonian simplex DGA related to the fundamental class of the spherization of the simplex bundle. 

In light of this, the following picture of the relation between Hamiltonian Floer Theory and SFT emerges. The Hamiltonian Floer Theory holomorphic curves solve a Cauchy-Riemann equation with Hamiltonian $0$-order term chosen consistently over Deligne-Mumford space. These curves are less symmetric than their counterparts in SFT, which are defined without additional $0$-order term. Accordingly, the moduli spaces of Hamiltonian Floer Theory have more structure and carry natural actions, e.g.~of scaling simplices and the framed little disk operad, see Section~\ref{sec:examples}. The SFT moduli spaces are in a sense homotopic to certain essential strata inside the Hamiltonian Floer Theory moduli spaces, and the structure and operations that they carry are intimately related to the natural actions mentioned. From this perspective, this paper studies the action given by scaling simplices in the most basic case of higher co-products. 

We end the introduction by a comparison between our constructions and other well-known constructions in Floer theory. In the case of open strings, the differential $d=\sum_{j=1}^{\infty}d_{j}$ can be thought of as a sequence of operations $(d_1,d_2,\dots,d_m,\dots)$ on the vector space $SC^{+}(L)$. These operations define the structure of an $\infty$-coalgebra on $SC^{+}(L)$ (with grading shifted down by one) and $\mathcal{SC}(L)^+$ is the cobar construction for this $\infty$-coalgebra. This point of view is dual to that of the Fukaya category, in which the primary objects of interest are $\infty$-algebras. In the Fukaya category setting algebraic invariants are obtained by applying (variants of) the Hochschild homology functor. In the DGA setting invariants are obtained more directly, as the homology of the Hamiltonian simplex DGA.

\subsubsection*{Acknowledgements} Both authors would like to thank the organizers of the G\"okova 20th Geometry and Topology Conference held in May 2013 for an inspiring meeting, during which the first ideas related to this paper crystallized. An early January 2013 discussion between the second author and Mohammed Abouzaid about the symplectic homology coproduct was also important. Part of this work was carried out while A.O. visited the Simons Center for Geometry and Physics at Stony Brook in the summer 2014.

\section{Simplex bundles over Deligne-Mumford space, splitting compatibility, and $1$-forms}
\label{sec:simplex_bundles} 
The Floer theories we study use holomorphic maps of disks and spheres with one positive and several negative punctures. Configuration spaces for such maps naturally fiber over the corresponding Deligne-Mumford space that parameterizes their domains. In this section we endow the Deligne-Mumford space with additional structure needed to define the relevant solution spaces. More precisely, we parameterize $1$-forms with non-positive exterior derivative by a simplex bundle over Deligne-Mumford space that respects certain restriction maps at several level curves in the boundary. We then combine these forms with a certain type of Hamiltonians to get non-positive forms with values in Hamiltonian vector fields, suitable as 0-order perturbations in the Floer equation.

\subsection{Asymptotic markers and cylindrical ends}\label{sec:cylends} 
We will use punctured disks and spheres with a fixed choice of cylindrical end at each puncture. Here, a \emph{cylindrical end} at a puncture is defined to be a biholomorphic identification of a neighborhood of that puncture with one of the following punctured model Riemann surfaces: 
\begin{itemize}
\item Negative interior puncture:
$$
Z^-=(-\infty,0)\times S^1\approx D^2\setminus\{0\}, 
$$
where $D^{2}\subset \C$ is the unit disk in the complex plane.
\item Positive interior puncture:
$$
Z^+=(0,\infty)\times S^1\approx \mathbb{C}\setminus \bar D^2.
$$
\item Negative boundary puncture:
$$ 
\Sigma^-=(-\infty,0)\times[0,1]\approx (D^2\setminus\{0\})\cap H,
$$ 
where $H\subset \C$ denotes the closed upper half plane.
\item Positive boundary puncture:
$$
\Sigma^+=(0,\infty)\times [0,1]\approx(\mathbb{C}\setminus \bar D^2)\cap H.
$$
\end{itemize}
Each of the above model surfaces has a canonical complex coordinate of the form $z=s+it$. Here $s\in\R$ at all punctures, with $s>0$ or $s<0$ according to whether the puncture is positive or negative. At interior punctures, $t\in S^1$ and at boundary punctures, $t\in[0,1]$.

The automorphism group of the cylindrical end at a boundary puncture is $\R$ and the end is thus well-defined up to a contractible choice of automorphisms. For a positive or negative interior puncture, the corresponding automorphism group is $\R\times S^{1}$. Thus the cylindrical end is well-defined up to a choice of automorphism in a space homotopy equivalent to $S^{1}$. To remove the $S^{1}$-ambiguity, we fix an \emph{asymptotic marker} at the puncture, i.e.~a tangent half-line at the puncture, and require that it corresponds to $(0,\infty)\times \{1\}$ or to $(-\infty,0)\times \{1\}$, $1\in S^{1}$, at positive or negative punctures, respectively. The cylindrical end at an interior puncture \emph{with asymptotic marker} is then well-defined up to contractible choice.

We next consider various ways to induce asymptotic markers at interior punctures that we will eventually assemble into a coherent choice of asymptotic markers over the space of punctured spheres and disks. Consider first a disk $D$ with interior punctures and with a distinguished boundary puncture $p$. Then $p$ determines an asymptotic marker at any interior puncture $q$ as follows. There is a unique holomorphic diffeomorphism $\psi\colon D\to D^{2}\subset\C$ with $\psi(q)=0$ and $\psi(p)=1$. Define the asymptotic marker at $q$ in $D$ to correspond to the direction of the real line at $0\in D^{2}$, i.e.~the direction given by the vector $d\psi^{-1}(0)\cdot 1$. See Figure~\ref{fig:markers}.

Similarly, on a sphere $S$, a distinguished interior puncture $p$ with asymptotic marker determines an asymptotic marker at any other interior puncture $q$ as follows. There is a holomorphic map $\psi\colon S\to \R\times S^{1}$ taking $p$ to $\infty$, $q$ to $-\infty$, and the asymptotic marker to the tangent vector of $\R\times\{1\}$. We take the asymptotic marker at $q$ to correspond to the tangent vector of $\R\times\{1\}$ at $-\infty$ under $\psi$. See Figure~\ref{fig:markers}.

\begin{figure}
         \begin{center}
\input{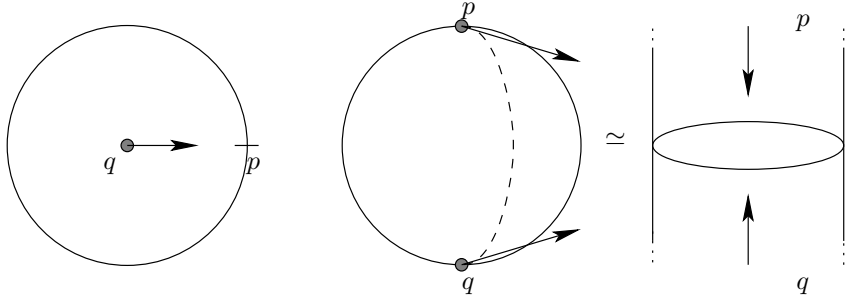}
         \end{center}
\caption{Inducing markers at negative interior punctures.  \label{fig:markers}}
\end{figure}

For a more unified notation below we use the following somewhat involved convention for our spaces of disks and spheres. Let $h\in\{0,1\}$.
For $h=1$ and $m,k\ge 0$, let $\mathcal{D}_{h;hm,k}'=\mathcal{D}_{1;m,k}'$ denote the moduli space of disks with one positive boundary puncture, $m\ge 0$ negative boundary punctures, 
and $k$ negative interior punctures. For $h=0$ and $k\ge 0$, let $\mathcal{D}_{h;hm,k}'=\mathcal{D}_{0;0,k}'$ denote the moduli space of spheres with one positive interior puncture with asymptotic marker and $k$ negative interior punctures. 

As explained above there are then, for both $h=0$ and $h=1$, induced asymptotic markers at all the interior negative punctures of any element in $\mathcal{D}_{h;hm,k}'$. The space $\mathcal{D}_{h;hm,k}'$ admits a natural compactification that consists of several level disks and spheres, see~\cite[\S4]{BEHWZ} and also~\cite{KSV}. We introduce the following notation to describe the boundary. Consider a several level curve. We associate to it a downwards oriented rooted tree $\Gamma$ with one vertex for the positive puncture of each component of the several level curve and one edge for each one of the negative punctures of the components of the several level curves. See Figure~\ref{fig:trees} for examples. Here the root of the tree is the positive puncture of the top level curve and the edges attached to it are the edges of the negative punctures in the top level oriented away from the root. The definition of $\Gamma$ is inductive: the vertex of the positive puncture of a curve $C$ in the $j^{\rm th}$ level is attached to the edge of the negative puncture of a curve in the $(j-1)^{\rm th}$ level where it is attached. All edges of negative punctures of $C$ are attached to the vertex of the positive puncture of $C$ and oriented away from it. Then the boundary strata of $\mathcal{D}_{h;hm,k}'$ are in one to one correspondence with such graphs $\Gamma$ and the components of the several level curve are in one-to-one correspondence with downwards oriented sub-trees consisting of one vertex and all edges emanating from it. For example the graph of a curve lying in the interior of $\mathcal{D}_{h;hm;k}'$ is simply a vertex with $hm+k$ edges attached and oriented away from the vertex. To distinguish the edges of such graphs $\Gamma$, we call an edge a \emph{gluing edge} if it is attached to two vertices and \emph{free} if it is attached only to one vertex. 

\begin{figure}
         \begin{center}
\input{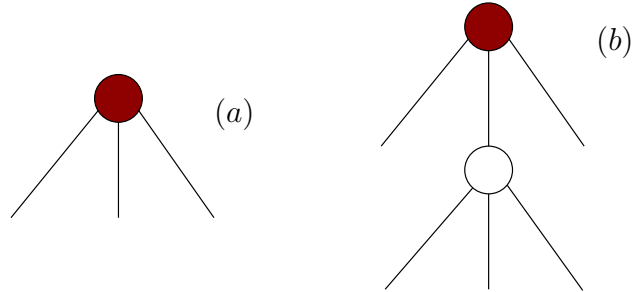}
         \end{center}
\caption{(a) A curve in the main stratum of $\mathcal{D}'_{h;hm,k}$ with $hm+k=3$; (b) A $2$-level curve in the boundary of $\mathcal{D}'_{h;hm,k}$ with $hm+k=5$.  \label{fig:trees}}
\end{figure}

Note next that the induced asymptotic markers are compatible with the level structure in the boundary of $\mathcal{D}_{h;hm,k}'$ in the sense that they vary continuously with the domain inside the compactification. To see this, note that in a boundary stratum corresponding to a graph $\Gamma$, it is sufficient to study neck stretching for cylinders corresponding to linear subgraphs of $\Gamma$, and here the compatibility of asymptotic markers with the level structure is obvious.  

Consider the bundle $\mathcal{C}_{h;hm,k}'\to \mathcal{D}_{h;hm,k}'$, with $h\in\{0,1\}$, $m,k\ge 0$, of disks or spheres with punctures with cylindrical ends compatible with the markers. The fiber of this bundle is contractible so there exists a section. We next show that there is also a section over the compactification of $\mathcal{D}_{h;hm,k}'$. The proof is inductive. We first choose cylindrical ends for disks and spheres with 3 punctures. Gluing these we get cylindrical ends in a neighborhood of the boundary of the moduli space of disks and spheres with 4 punctures. Since the fiber of $\mathcal{C}_{h;hm;k}'$ is contractible this choice can be extended continuously over the whole space of disks and spheres with 4 punctures. The proof then continues inductively in the obvious way: a choice of cylindrical ends for disks and spheres whose number of negative punctures is strictly smaller than $m+k$ determines a section of the bundle $\mathcal{C}_{h;hm,k}'\to\mathcal{D}_{h;hm,k}'$ near its boundary, and this section extends because the fiber of the bundle is contractible. 

Let $\{\mathcal{D}_{h;hm,k}\}_{h\in\{0,1\},\,k,m\ge 0,}$, $\mathcal{D}_{h;hm,k}\colon \mathcal{D}_{h;hm,k}'\to\mathcal{C}_{h;hm,k}'$ denote a system of sections as in the inductive construction above, with $\mathcal{D}_{h;hm,k}$ defined over the compactification of $\mathcal{D}_{h;hm,k}'$. We say that 
$$
\mathcal{D} \quad =\bigcup_{h\in\{0,1\};\,m,k\ge 0}\mathcal{D}_{h;hm,k}
$$ 
is a \emph{system of cylindrical ends that is compatible with breaking}. 

We identify $\mathcal{D}_{h;hm,k}$ with its graph and think of it as a subset of $\mathcal{C}_{h;hm,k}'$. The projection of $\mathcal{D}_{h;hm,k}$ onto $\mathcal{D}_{h;hm,k}'$ is a homeomorphism and, after using smooth approximation, a diffeomorphism with respect to the natural stratification of the space determined by several level curves. Via this projection we endow $\mathcal{D}_{h;hm,k}$ with the structure of a set consisting of (several level) curves with additional data corresponding to a choice of a cylindrical end neighborhood at each puncture. 

A neighborhood of a several level curve $S\in\mathcal{D}_{h;hm,k}$ can then be described as follows. Consider the graph $\Gamma$ determined by $S$. Let $v_0,v_1,\dots,v_{r}$ denote the vertices of $\Gamma$ with $v_0$ the top vertex and let $e_1,\dots,e_s$ denote the gluing edges of $\Gamma$. Let $U_j$ be neighborhoods in $\mathcal{D}_{h_j;h_jm_j,k_j}$ of the component corresponding to $v_j$. Then
a neighborhood $U$ of $S$ is given by 
\begin{equation}\label{eq:bdrycoord}
U=\left(\prod_{\begin{smallmatrix}
v_j \text{ vertex} \\ \text{of }\Gamma \end{smallmatrix}} 
U_{j}\right)
\times \left(\prod_{\begin{smallmatrix}
e_l \text{ gluing} \\ \text{edge of }\Gamma \end{smallmatrix}}
(\rho_{0;l},\infty)\right),
\end{equation}   
$\rho_{0;l}\ge 0$ for $1\le j\le s$.
Here the \emph{gluing parameters} $\rho_l\in (\rho_{0;l},\infty)$ measure the length of the breaking cylinder or strip corresponding to the gluing edge $e_l$. More precisely, assume that $e_l$ connects $v_i$ and $v_j$ and corresponds to the curve $S_{j}$ of $v_j$ attached at its positive puncture $p_j$ to a negative puncture $q_i$ of the curve $S_i$ of $v_i$. Then, given the cylindrical ends $(-\infty,0]\times S^1$ (interior case) or $(-\infty,0]\times [0,1]$ (boundary case) for $q_i$, respectively $[0,\infty)\times S^1$ (interior case) or $[0,\infty)\times [0,1]$ (boundary case) for $p_j$, the glued curve corresponding to the parameter $\rho_l\in(0,\infty)$ is obtained via the gluing operation on these cylindrical ends defined by cutting out $(-\infty,-\rho_l/2)\times S^1$ or $(-\infty,-\rho_l/2)\times [0,1]$ from the cylindrical end of $q_i$, cutting out $(\rho_l/2,\infty)\times S^1$ or $(\rho_l/2,\infty)\times [0,1]$ from the cylindrical end of $p_j$, and gluing the remaining compact domains in the cylindrical ends by identifying $\{-\rho_l/2\}\times S^1$ with $\{\rho_l/2\}\times S^1$, respectively $\{-\rho_l/2\}\times [0,1]$ with $\{\rho_l/2\}\times [0,1]$. We refer to the resulting compact domain as the \emph{breaking cylinder or strip}, and we refer to $\{-\rho_l/2\}\times S^1\equiv \{\rho_l/2\}\times S^1$ or $\{-\rho_l/2\}\times [0,1]\equiv \{\rho_l/2\}\times [0,1]$ as its \emph{middle circle or segment}. Given a several level curve $S$ in this neighborhood we write ${\overline S}_{j}$ for the closures of the components that remain if the middle circle or segment in each breaking cylinder or strip is removed, and that correspond to subsets of the levels $S_{j}$ of the broken curve.
See Figure~\ref{fig:gluing}.

\begin{figure}
         \begin{center}
\input{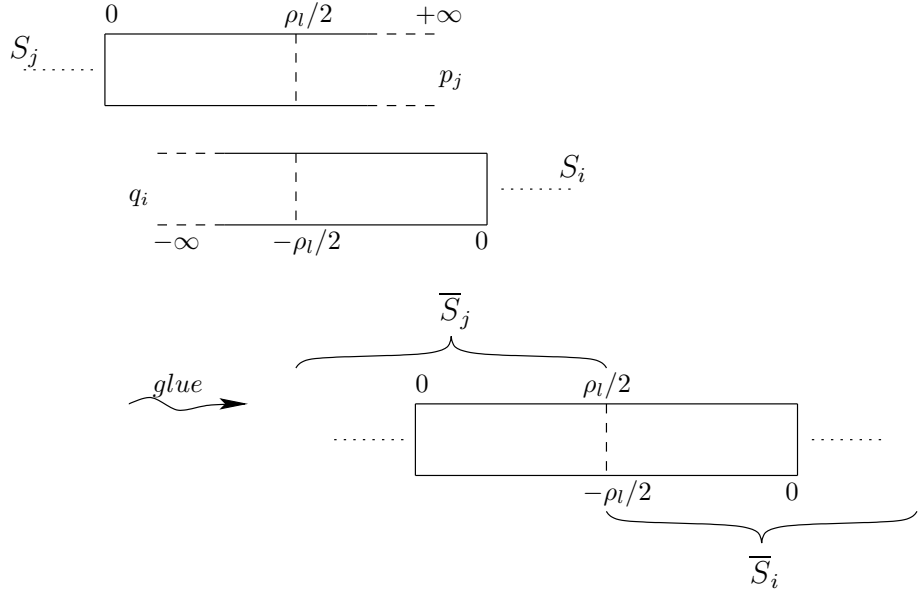}
         \end{center}
\caption{Gluing of a nodal curve in cylindrical coordinates.  \label{fig:gluing}}
\end{figure}

\subsection{Almost complex structures}\label{sec:acs}
We next introduce splitting compatible families of almost complex structures over $\mathcal{D}$. Let $\mathcal{J}(X)$ denote the space of almost complex structures on $X$ compatible with $\omega$ and adapted to the contact form $\alpha$ in the cylindrical end, i.e.~if $J\in\mathcal{J}$ then in the cylindrical end $J$ preserves the contact planes and takes the vertical direction to the Reeb direction. Our construction of a family of almost complex structures is inductive. We start with strips, cylinders and cylinders with slits with coordinates $s+it$. Here we require that $J=J_{t}$ depends only on the $I$ or $I/\partial{I}$ coordinate. Assume that we have defined a family of almost complex structures $J_{z}$ for all curves $D_{h;hm,k}$, $hm+k\le p$ which have the form above in every cylindrical end and which commute with restriction to components for several level curves. By gluing we then have a field of almost complex structures in a neighborhood of the boundary of $\mathcal{D}_{h;hm,k}$ for $hm+k=p+1$. Since $\mathcal{J}$ is contractible it is easy to see that we can extend this family to all of $\mathcal{D}_{h;hm,k}$. We call the resulting family of almost complex structures over the universal curve corresponding to $\mathcal{D}$, \emph{splitting compatible}.

\subsection{A simplex bundle}\label{sec:simplexbundle}
Consider the trivial bundle 
\[
\mathcal{E}^{hm+k-1}=\mathcal{D}_{h;hm,k}\times\Delta^{hm+k-1} \to\mathcal{D}_{h;hm,k}
\] 
over $\mathcal{D}_{h;hm,k}$, with fiber the open $(hm+k-1)$-simplex 
$$
\Delta^{hm+k-1}=\left\{(s_1,\dots,s_{hm+k})\colon \sum_i s_i = 1,\ s_i>0\right\}.
$$
Since the bundle is trivial, it extends as such over the compactification of $\mathcal{D}_{h;hm,k}$. We think of the coordinates of a point $(s_1,\dots,s_{hm+k})\in \Delta^{hm+k-1}$ over a disk or sphere $D_{h;hm,k}\in \mathcal{D}_{h;hm,k}$ as representing weights at its negative punctures, and we think of the positive puncture as carrying the weight $1$. 

We next define restriction maps for $\mathcal{E}^{hm+k-1}$ over the boundary of $\mathcal{D}_{h;hm,k}$. Let $s=(s_1,\dots,s_{hm+k})\in\Delta^{hm+k-1}$ denote the weights of a several level curve $S$ in the boundary of $\mathcal{D}_{h;hm,k}$ with graph $\Gamma$.  Let $S_j$ be a component of this building corresponding to the vertex $v_j$ of $\Gamma$, with positive puncture $q_0$ and negative punctures $q_1,\dots,q_n$. Define the weight $w(q_l)$ at $q_l$, $l=0,\dots,n$ as follows. For $l=0$, $w(q_0)$ equals the sum of all weights at negative punctures $q$ of the total several level curve for which there exists a level-increasing path in $\Gamma$ from $v_j$ to $q$. For $l\ge 1$, if the edge of the negative puncture $q_l$ is free then $w(q_l)$ equals the weight of the puncture $q_l$ as a puncture of the total several level curve, and if the edge is a gluing edge connecting $v_j$ and $v_t$, then $w(q_l)$ equals the sum of all weights at negative punctures $q$ of the total several level curve for which there exists a level-increasing path in $\Gamma$ from $v_t$ to $q$. Note that $w(q_0)=w(q_1)+\dots+w(q_n)$ by construction.

The \emph{component restriction map} $r_{j}$ then takes the point $s\in\Delta^{hm+k-1}$ over $S$ to the point 
\[ 
r_{j}(s)=\tfrac{1}{w(q_0)}(w(q_1),\dots,w(q_n))\in\Delta^{n-1}
\]
over $S_j$ in $\mathcal{E}^{n-1}$. The component restriction map $r_{j}$ is defined on the restriction of $\mathcal{E}^{hm+k-1}$ to the stratum that corresponds to $\Gamma$ in the boundary of $\mathcal{D}_{h;hm,k}$.

\begin{figure}
         \begin{center}
\input{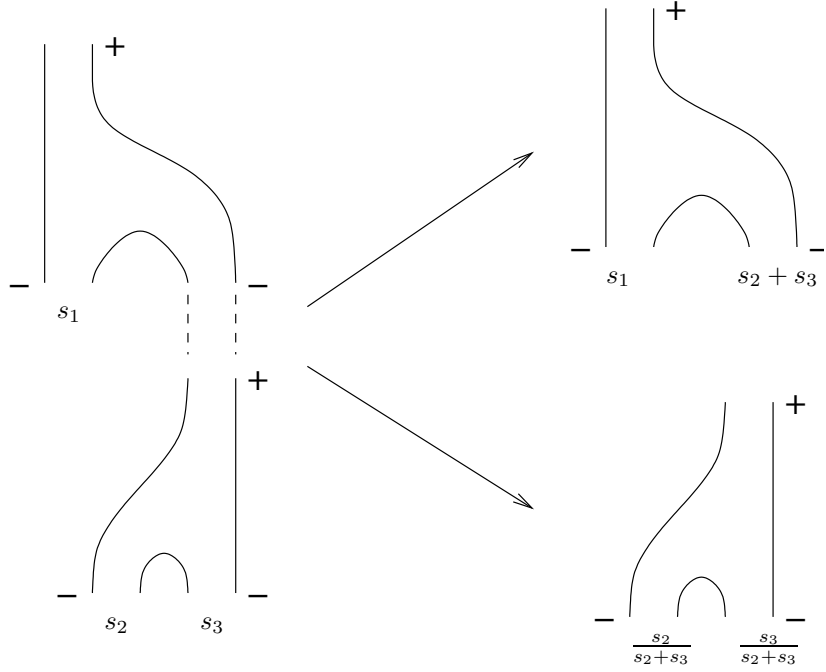}
         \end{center}
\caption{Component restriction maps.  \label{fig:weights}}
\end{figure}

\subsection{Superharmonic functions and non-positive 1 forms}\label{sec:superharmonic}
Our main Floer homological constructions involve studying Floer holomorphic curves parameterized by finite dimensional families of 1-forms with values in Hamiltonian vector fields. As discussed in Section \ref{S:intr}, it is important that the $1$-forms are non-positive, i.e.~the associated 2-forms are non-positive multiples of the area form. Furthermore, in order to derive basic homological algebra equations, the 1-forms must be gluing/breaking compatible on the boundary of Deligne-Mumford space. In this section we construct a family of superharmonic functions parameterized by $\mathcal{E}$ that is compatible with the component restriction maps at several level curves. The differentials of these functions multiplied by the complex unit $i$ then give a family of 1-forms with non-positive exterior derivative that constitutes the basis for our construction of 0-order term in the Floer equation. 

Fix a smooth decreasing function $\kappa\colon(0,1]\to [0,\infty)$ such that $\kappa(1)=0$ and 
\begin{equation}\label{eq:defrho}
\lim_{s\to 0+}\kappa(s)=+\infty.
\end{equation}
We will refer to $\kappa$ as a \emph{stretching profile}.

We will construct a family of functions over curves in $\mathcal{D}$ parameterized by the bundle $\mathcal{E}$ in the following sense. If $e\in\mathcal{E}$ belongs to the fiber over a one level curve $D_{h;hm,k}\in\mathcal{D}_{h;hm,k}$ then $g_{e}\colon D_{h;hm,k}\to\R$. If $D_{h;hm,k}$ is a several level curve with graph $\Gamma$ and components $S_j$ corresponding to its vertices $v_j$, $j=0,\dots,s$, then $g_{e}$ is the collection of functions $g_{r_0(e)},\dots,g_{r_s(e)}$ on $S_0,\dots, S_s$, where $r_{j}$ denotes the component restriction map to $S_{j}$. Our construction uses induction on the number of negative punctures and on the number of levels.

In the first case $hm+k=1$ and the domain is the strip $\R\times[0,1]$, the cylinder $\mathbb{R}\times S^{1}$, or the cylinder with a slit (which we view as a subset of $\R\times S^{1}$). Over these domains the fiber of $\mathcal{E}$ is a point $e$ and we take the function $g_{e}$ to be the projection to the $\R$-factor.  

For $hm+k>1$, we specify properties of the functions separately for one level curves in the interior of $\mathcal{D}_{h;hm,k}$ and for a neighborhood of several level curves near the boundary.  We start with one level curves. Let $e$ be a section of $\mathcal{E}$ over one level curves in the interior $\mathring{\mathcal{D}}_{h;hm,k}$. Let $D_{h;hm,k}\in\mathring{\mathcal{D}}_{h;hm,k}$ and write $e=(w_1,\dots,w_{hm+k})\in{\Delta}^{hm+k-1}$.

We say that a smooth family of functions $g_{e}$ over the interior satisfies the  
\emph{one level conditions} if the following hold. (We write $\pi\colon \mathcal{E}\to\mathcal{D}$ for the projection.)
\begin{itemize}
\item[$(\mathbf{I})$]
There is a constant $c_0=c_0(\pi(e))$ such that in a neighborhood of infinity in the cylindrical end at the positive puncture 
\begin{equation}\label{eq:gcylend+}
g_{e}(s+it)=c_0+s,
\end{equation}
where $s+it$ is the complex coordinate in the cylindrical end, i.e.~in $[0,\infty)\times S^{1}$ for an interior puncture and in $[0,\infty)\times[0,1]$ for a boundary puncture, see Section \ref{sec:cylends}.
\item[$(\mathbf{II}_{1})$]
There are constants $\sigma=\sigma(\pi(e))\in [1,2)$, $R=R(\pi(e))>0$, $c_j=c_j(e)$, and $c_j'=c_j'(e)$ for $j=1,\dots,hm+k$, such that in a neighborhood of infinity in the cylindrical end of the $j^{{\rm th}}$ negative puncture of the form $(-\infty,0]\times S^1$ for interior punctures or $(-\infty,0]\times [0,1]$ for boundary punctures, we have $g_e(s+it)=g_e(s)$, where
\begin{equation}
g_{e}(s)=
\begin{cases}\label{eq:gcylend-1}
c'_j+\sigma w_js &\text{ for } -R\ge s \ge -R-\kappa(w_j),\\
c_j+s &\text{ for }  -R-\kappa(w_j)-1 \ge s > -\infty,
\end{cases}
\end{equation}
is a concave function, $g''_e(s)\le 0$, and where $\kappa$ is the stretching profile \eqref{eq:defrho}. In particular, for each weight $w_j$ at a negative puncture there is a cylinder or strip region of length at least $\kappa(w_j)$ along which $g_{e}(s+it)=\epsilon s + C$, with $0<\epsilon\le 2w_j$.
\item[$(\mathbf{III})$] The function is superharmonic, $\Delta g_{e}\le 0$ everywhere. 
\item[$(\mathbf{IV})$] The derivative of $g_e$ in the direction of the normal $\nu$ of the boundary $\partial D_{h;hm,k}$ vanishes everywhere: 
\[
\frac{\partial g_{e}}{\partial\nu}=0\quad\text{ along }\quad\partial D_{h;hm,k}.
\]
\end{itemize}

\begin{rmk} For the boundary condition $(\mathbf{IV})$, note that, for the cylinder with a slit, in local coordinates $u+iv$, $v\ge 0$ at the end of the slit the standard function looks like $g_{e}(u+iv)=u^{2}-v^{2}$, and $\frac{\partial g_{e}}{\partial v}=0$.
\end{rmk}

\begin{rmk}
The appearance of the ``extra factor'' $\sigma$ in~\eqref{eq:gcylend-1} is to allow for a certain interpolation below. As we shall see, we can take $\sigma$ arbitrarily close to $1$ on compact sets of $\mathring{\mathcal{D}}_{h;hm,k}$. As mentioned in Section \ref{S:intr}, one of the main uses of weights is to force solutions to degenerate for small weights, and for desired degenerations it is enough that $\sigma$ be uniformly bounded. At the opposite end we find the following restriction on $\sigma$: superharmonicity in the cylindrical end near a negative puncture where the weight is $w_j$ implies that $\sigma w_j\le 1$, and in particular $\sigma\to 1$ if $w_j\to 1$. In general, 
superharmonicity of the function $g_e$ is equivalent to the differential $d(-i^*dg_e)$ being non-positive with respect to the conformal area form on the domain $D_{h;hm,k}$. This is compatible with Stokes' theorem, which gives
$$
\int_{D_{h;hm,k}}-d(i^*dg_e)= 1-(hm+k)\le 0.
$$
\end{rmk}

We will next construct families of functions satisfying the one-level condition over any compact subset of the interior of $\mathcal{D}_{h;hm,k}$. Later we will cover all of $\mathcal{D}_{h;hm,k}$ with a system of neighborhoods of the boundary where condition $\mathbf{II}_{1}$ above is somewhat weakened but still strong enough to ensure degeneration for small weights. 

\begin{lemma}\label{lem:1-supharm}
If $e\colon \mathring{\mathcal{D}}_{h;hm,k}\to \mathcal{E}$ is a constant section, then, over any compact subset $\mathcal{K}\subset\mathring{\mathcal{D}}_{h;hm,k}$, there is a family of functions $g_{e}$ that satisfies the one level conditions. Moreover, we can take $\sigma$ in $(\mathbf{II}_{1})$ arbitrarily close to $1$.
\end{lemma}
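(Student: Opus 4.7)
The plan is to construct $g_e$ by gluing a harmonic model on the main body of the domain to explicit $t$-independent concave profiles in the cylindrical ends. The key observation is that the simplex constraint $\sum_{j=1}^{hm+k}w_j=1$ is exactly the flux balance needed to solve a Neumann Laplace problem on the punctured Riemann surface $D_{h;hm,k}$ with the prescribed asymptotic slopes; for $\sigma=1$ the resulting function is harmonic in the main body, and $\sigma$ slightly larger than $1$ will be absorbed by a small concave correction in the ends.

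First I would invoke standard potential theory on punctured Riemann surfaces with cylindrical ends to produce, for each $D\in\mathcal{K}$, a harmonic function $h\colon D\to\R$, unique up to an additive constant, satisfying $\partial h/\partial\nu=0$ on $\partial D$ and with asymptotics $h(s+it)=c_0+s+O(e^{-s})$ at the positive puncture and $h(s+it)=\tilde c_j+w_j s+O(e^{s})$ at the $j$th negative puncture. Existence reduces to the single flux identity $1=\sum_j w_j$, i.e.\ the simplex constraint; Fredholm theory for the Laplacian in weighted Sobolev spaces on manifolds with cylindrical ends gives smooth, continuous dependence of $h$ on $D$ over the compact set $\mathcal{K}$, together with uniform exponential decay of the higher Fourier modes in the ends.

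Next, fix $R$ large (uniformly on $\mathcal{K}$). I reshape the ends as follows. In the positive end, replace $h$ by $c_0+s$ exactly for $s\ge R+1$, killing the exponentially small remainder via a cutoff on $[R,R+1]$. In each negative end, first analogously make $h=w_j s+\tilde c_j$ exactly for $s\le -R-1$, and then further out in $s$ build the prescribed one-variable concave profile of $(\mathbf{II}_1)$: a short concave affine transition from slope $w_j$ to slope $\sigma w_j$, then $g_e=c'_j+\sigma w_j s$ on an interval of length $\kappa(w_j)$, then a concave smoothing to slope $1$, then the tail $g_e=c_j+s$. Each far-end piece depends only on $s$ and is concave, so $\Delta g_e=g_e''(s)\le 0$ there, and the Neumann condition on $\partial D$ is automatic. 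In the narrow cutoff bands the Laplacian introduced by the cutoff is bounded by $Ce^{-R}$; subtracting a $t$-invariant bump $\delta\varphi(s)$ with $\varphi''\ge 1$ supported in those bands and $\delta\gg Ce^{-R}$ restores $\Delta g_e\le 0$ everywhere while preserving all prescribed profiles and the Neumann condition.

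The value $\sigma=1$ requires no affine transition in the reshaping step, and $\sigma$ slightly greater than $1$ is permissible whenever $\sigma w_j<1$, which holds for all $\sigma$ sufficiently close to $1$ because $w_j<1$ whenever $hm+k\ge 2$ (the case $hm+k=1$ is trivial and reduces to the model cylinder/strip profile with $\sigma=1$). The main obstacle is the analytic setup of the harmonic model: existence of $h$ with the correct linear-plus-exponentially-decaying asymptotics and continuous dependence on $D\in\mathcal{K}$. This is standard for the Laplacian on Riemann surfaces with cylindrical ends once the flux identity holds, and the remaining surgery in the ends is essentially one-dimensional and elementary.
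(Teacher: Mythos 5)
The paper proves this lemma quite differently: it avoids potential theory altogether by taking $g=\sum_j g_j$, where each $g_j$ is an explicit concave function of the pull-back of the $\R$-coordinate under a conformal map $D\to\R\times S^1$ (sending $p\mapsto+\infty$, $q_j\mapsto-\infty$). Each $g_j$ is then \emph{globally} superharmonic ($\Delta g_j=g_j''(s_j)|\nabla s_j|^2\le 0$), and the exponentially small corrections needed to achieve the exact model profiles $(\mathbf{I})$, $(\mathbf{II}_1)$ are cut off precisely in the regions where some $g_j''$ is strictly negative, so the cut-off errors are dominated by the existing strict concavity. Your strategy is genuinely different: a harmonic Neumann model in the bulk glued to explicit concave end profiles.

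The gap is in the error-absorption step, and I do not see how to repair it as stated. The cutoff from $h$ to the exact affine profile $c_0+s$ (resp.\ $w_js+\tilde c_j$) occurs in a band where $h$ is \emph{harmonic} and the target is \emph{affine}, so before correction $\Delta g_e$ is $O(e^{-R})$ there with no fixed sign, and $\Delta g_e=0$ on both sides of the band. Your proposed fix is to subtract $\delta\varphi(s)$ with ``$\varphi''\ge 1$ supported in those bands''. But this cannot produce a concave correction without spoiling the required slopes: a compactly supported $C^2$ function cannot have $\varphi''\ge 1$ on its support; and if instead $\varphi''$ is supported in the band (so that $\varphi$ is affine outside), then $\varphi$ is globally convex with $\varphi'(+\infty)-\varphi'(-\infty)=\int\varphi''\ge 1$. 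Subtracting $\delta\varphi$ therefore necessarily changes the slope by at least $\delta$ on one side of the band. At the positive end this clashes with $(\mathbf{I})$, which requires slope exactly $1$ at $+\infty$, while on the bulk side the slope must match $h'\to 1$; both cannot survive. The underlying obstruction is structural: the Neumann flux identity $1=\sum_jw_j$ leaves zero slack, so the bulk function is forced to be exactly harmonic, and there is no strict superharmonicity available in the bulk or at the glue bands to swallow a positive perturbation. Any successful repair has to make the cutoff happen where $\Delta g_e\le -c<0$ already holds, which is precisely the role the strict concavity of the $g_j$ plays in the paper's construction. The rest of your argument (the Neumann existence theory, the explicit one-variable concave profiles further out in the ends, and the remark on $\sigma w_j<1$ and $\sigma\to 1$) is sound, but the patching between the harmonic region and the ends needs a different mechanism.
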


\begin{proof}
For simpler notation, let $D=D_{h;hm,k}$.
Consider first the case when the positive puncture $p$ and all the negative punctures $q_1,\dots,q_{k}$ are interior. Fix an additional marked point in the domain. For each $q_{j}$, fix a conformal map to $\R\times S^{1}$ which takes the positive puncture to $\infty$, the marked point to some point in $\{0\}\times S^{1}$, and the negative  puncture to $-\infty$. Fix $\sigma\in (1,2)$ and let $g_{j}'\colon D\to\R$ be the function $g_j'=\frac {1+\sigma} 2 w_js_j+c_j$ with $s_j$ the $\mathbb{R}$-coordinate on $\mathbb{R}\times S^1$. Let $g_j$ be a concave approximation of this function 
with second derivative non-zero only on two intervals of finite length located near $\pm\infty$, linear of slope $w_j$ near $+\infty$ and linear of slope $\sigma w_j$ near $-\infty$, see Figure~\ref{fig:g}. 
Consider the function
\[ 
g=\sum_{j=1}^{k} g_j.
\]
Then $g$ is superharmonic but it does not quite have the right behavior at the punctures. 
Here however, the leading terms are correct and the errors are exponentially small. We turn off the exponential error in a neighborhood of $q_j$ in the region of support of the second derivative of $g_{j}$. We can arrange the parameters so that the resulting function satisfies \eqref{eq:gcylend+} near the positive puncture, and it satisfies the top equation in the right hand side of \eqref{eq:gcylend-1} in some neighborhood of $q_{j}$. In order to achieve the bottom equation in a neighborhood of $q_j$ we simply replace the linear function of slope $\sigma w_j$ by a concave function that interpolates between it and the linear function of slope $1$. The fact that we can take $\sigma$ arbitrarily close to $1$ follows from the construction.

The case of boundary punctures can be treated in exactly the same way. In case of a positive boundary puncture and a negative interior puncture we replace the cylinder above with the cylinder with a slit along $[0,\infty)\times \{1\}$ and in case of both positive and negative boundary punctures we use the cylinder with a slit all along $\R\times\{1\}$. 
\end{proof}

\begin{remark}
For future reference we call the regions in the cylindrical ends where $\Delta g_{e}<0$ \emph{regions of concavity}.
\end{remark}

\begin{figure}
         \begin{center}
\input{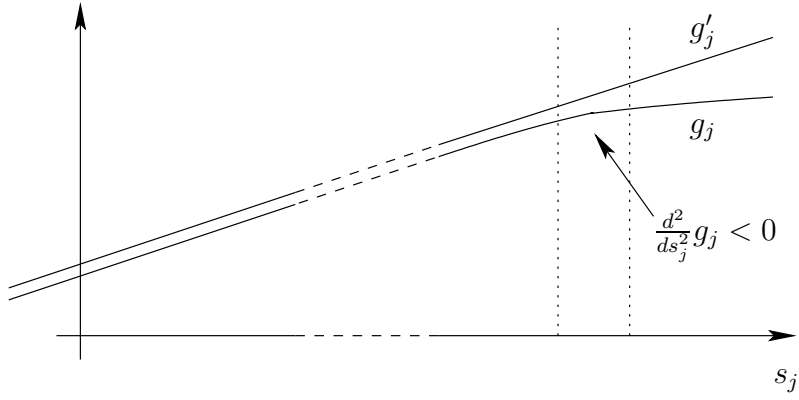}
         \end{center}
\caption{A function $g_j$ that is strictly concave on the region of concavity near $+\infty$.}
\label{fig:g}
\end{figure} 

We next want to define a corresponding notion for several level curves. To this end we consider nested neighborhoods
\[ 
\dots\subset\mathcal{N}^{\ell}\subset \mathcal{N}^{\ell-1}\subset \mathcal{N}^{\ell-2} \subset \dots \subset \mathcal{N}^{2},
\]
were $\mathcal{N}^{j}$ is a neighborhood of the subset $\mathcal{D}^{j}\subset\mathcal{D}$ of $j$-level curves. Consider constant sections $e$ of $\mathcal{E}^{hm+k-1}$ over $\mathring{\mathcal{D}}_{h;hm,k}$ and let $g_{e}$ be a family of functions. The \emph{$\ell$-level conditions} are the same as the one level conditions $\mathbf{I}$, $\mathbf{III}$, and $\mathbf{IV}$, and also the following new condition: 
\begin{itemize}
\item[$(\mathbf{II}_{\ell})$] 
For curves in $\mathcal{N}^{\ell}-\mathcal{N}^{\ell-1}$ with $e=(w_1,\dots,w_{hm+k})$ and any $j$, there is a strip or cylinder region of length at least $\kappa((w_{j})^{\frac{1}{\ell}})$ where $g_{e}(s+it)=\epsilon s + C$ for $0<\epsilon\le 2(w_{j})^{\frac{1}{\ell}}$.  
\end{itemize}

Our next lemma shows that there is a family of functions $g_{e}$ that satisfies the $\ell$-level condition and that is also compatible with splittings into several level curves in the following sense. 

We say that a family of functions $g_e$ as above is \emph{splitting compatible} if the following holds. 
If $S_{\nu}\in\mathring{\mathcal{D}}_{h;hm,k}$, $\nu=1,2,3,\dots$ is a family of curves that converges as $\nu\to\infty$ to an $\ell$-level curve with components $S_{0},\dots, S_{m}$ and if $K_{\nu}\subset S_{\nu}$ is any compact subset that converges to a compact subset $K_j$ of $S_{j}$, then there is a sequence of constants $c_{\nu}$ such that the restriction $g_{e}|_{K_{\nu}}+c_{\nu}$ converges to $g_{r_j(e_j)}|_{K_j}$, where $r_j(e)$ is the component restriction of $e$ to $S_{j}$. 

\begin{lemma}\label{lemma:ell-levelsupharm}
There exists a system of neighborhoods 
\[ 
\dots\subset\mathcal{N}^{\ell}\subset \mathcal{N}^{\ell-1}\subset \mathcal{N}^{\ell-2} \subset \dots \subset \mathcal{N}^{2},
\]
and a splitting compatible family of functions $g_{e}$ parameterized by constant sections of $\mathcal{E}$ that satisfies the $\ell$-level condition for all $\ell\ge 1$. 
\end{lemma}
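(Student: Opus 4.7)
I would proceed by induction on $N = hm + k$, the number of negative punctures. The base case $N = 1$ is automatic: the fiber of $\mathcal{E}^0$ is a point, the domain is a strip, cylinder, or slit cylinder, and the choice $g_e(s+it) = s$ satisfies $(\mathbf{I})$, $(\mathbf{III})$, $(\mathbf{IV})$, and the multi-level conditions $(\mathbf{II}_\ell)$ trivially, with splitting compatibility holding vacuously.

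For $N > 1$, assume the statement for every moduli space with fewer than $N$ negative punctures, together with its system of nested neighborhoods and splitting-compatible family. Near each boundary stratum of $\overline{\mathcal{D}}_{h;hm,k}$ labeled by a graph $\Gamma$ with components $\{S_j\}$ and gluing edges $\{e_l\}$, work in the product coordinates of \eqref{eq:bdrycoord} and define $g_e$ on each component $\overline{S}_j$ by the inductive family applied to the restricted weights $r_j(e)$, where $r_j$ is the component restriction map of Section~\ref{sec:simplexbundle}. Condition $(\mathbf{I})$ at the positive puncture together with the outer branch of $(\mathbf{II}_\ell)$ at every negative puncture forces every inductively-produced function to have asymptotic slope $1$ in each cylindrical end. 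Consequently, across each breaking cylinder or strip of length $\rho_l$ the two adjacent pieces can be joined by a purely affine segment $s + C_l$, with the constants $C_l$ adjusted to make $g_e$ continuous. Such an affine join is $t$-independent and harmonic, so $(\mathbf{III})$ and $(\mathbf{IV})$ are preserved, and its independence of $\rho_l$ means the definition extends through the whole collar of the stratum.

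Declare
\[
\mathcal{N}^\ell = \{\text{curves whose graph contains a chain of at least } \ell-1 \text{ gluing edges with } \rho_l \ge R_\ell\},
\]
for thresholds $R_\ell \nearrow \infty$ fixed at the end of the induction so that the length $\kappa(w_j^{1/\ell})$ fits inside the concatenation of the successive affine regions produced above; then $\mathcal{N}^\ell \subset \mathcal{N}^{\ell-1}$ tautologically. Over the set $\mathring{\mathcal{D}}_{h;hm,k} \setminus \mathcal{N}^2$, invoke Lemma~\ref{lem:1-supharm} with $\sigma$ arbitrarily close to $1$ to build an interior family, and merge it with the boundary family in the shell $\mathcal{N}^2 \setminus \mathcal{N}^3$ using a cutoff $\chi$ depending only on the moduli parameter. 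Then $\Delta g_e = \chi \Delta g_e^{\mathrm{int}} + (1-\chi) \Delta g_e^{\mathrm{bdry}} \le 0$ is automatic once both sides are superharmonic. Confining the interpolation to the cylindrical collars of the negative punctures, where both candidates are already affine in $s$ with slopes in $(0, 2 w_j^{1/\ell}]$, guarantees $(\mathbf{II}_\ell)$; conditions $(\mathbf{I})$ and $(\mathbf{IV})$ are unaffected. Splitting compatibility on the resulting family follows from splitting compatibility on each component inherited from the induction hypothesis, with the $c_\nu$ absorbing accumulated affine shifts.

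The main technical obstacle is ensuring that the definitions attached to different boundary strata extend consistently near corners of $\overline{\mathcal{D}}_{h;hm,k}$, where two or more strata meet; this is where splitting compatibility from the induction pays off, because the lower-dimensional constructions already agree across their own corners, so the families attached to two adjacent strata automatically coincide on their common sub-stratum. A secondary concern, choosing the thresholds $R_\ell$ uniformly so that the stretch length $\kappa(w_j^{1/\ell})$ is realized for every combinatorial type of $\Gamma$ at depth $\ell$, is handled by the finiteness of such types at each fixed $N$: $R_\ell$ is chosen after all types have been enumerated, with no accumulation issue. Finally, had I tried to interpolate outside the linear cylindrical regions, the identity $\Delta(\chi g_1 + (1-\chi)g_2)$ would pick up no cross-term because $\chi$ lives on moduli rather than on $\Sigma$, but the conditions $(\mathbf{II}_\ell)$ on the explicit affine form of $g_e$ in the cylindrical ends would fail; restricting the merger to the already-affine region is what makes the whole construction work.
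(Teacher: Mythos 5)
Your overall inductive scheme — base case $N=1$, inductive step via the component restriction maps and affine joins across breaking cylinders, the multiplicativity identity $w_j = w(q_0)\,w_j(r_1(e))$ feeding into $\kappa((w_j)^{1/\ell})\le\max(\kappa(w(q_0)),\kappa(w_j(r_1(e))^{1/(\ell-1)}))$, and the choice of nested neighborhoods $\mathcal{N}^\ell$ by gluing-parameter thresholds — is the paper's approach. However, there is a genuine gap in the interpolation step. You take $g_e = \chi\, g_e^{\mathrm{int}} + (1-\chi)\, g_e^{\mathrm{bdry}}$ with $\chi$ a moduli-space cutoff, and argue that $(\mathbf{II}_\ell)$ holds because ``both candidates are already affine in $s$ with slopes in $(0,2w_j^{1/\ell}]$'' on the cylindrical collars. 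That is not automatic: $g_e^{\mathrm{int}}$ (from Lemma~\ref{lem:1-supharm}) has a slow region of slope $\le 2w_j$ and length $\ge\kappa(w_j)$ at some $s$-interval, while $g_e^{\mathrm{bdry}}$ (the glued function) has its slow region(s) governed by the decomposition into levels, and there is no reason for these intervals to coincide. Outside its own slow region each candidate has derivative close to $1$ in the end. So for intermediate $\chi$, say $\chi \approx 1/2$, the convex combination has derivative $\approx 1/2$ on each candidate's slow region, which for small $w_j$ violates the bound $\le 2w_j^{1/\ell}$. The convexity of the Laplacian saves $(\mathbf{III})$, but not the quantitative slow-region condition $(\mathbf{II}_\ell)$.

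The paper's proof avoids this by not convex-combining two fixed functions; instead, as the gluing parameter decreases it performs an explicit deformation of the derivative profile, ``decreasing it uniformly below the gluing region and stretching the region near the negative puncture where it is small, until we reach the level one function'' (see Figure~\ref{fig:necked-weights}). This guarantees that at every intermediate stage there is a designated strip/cylinder region of the required length whose derivative is controlled, rather than trusting that two independently constructed slow regions overlap. If you want to salvage your cutoff, you would need to first re-position the slow regions of $g_e^{\mathrm{int}}$ and $g_e^{\mathrm{bdry}}$ so that the slow interval of $g_e^{\mathrm{bdry}}$ is contained in that of $g_e^{\mathrm{int}}$ near each negative puncture, and only then take the convex combination; but this re-positioning is exactly what the stretch-and-decrease deformation accomplishes, and it needs to be stated explicitly.
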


\begin{proof}
The proof is inductive. In the first case $hm+k=2$ there are only one level curves and we use the canonical functions $g_{e}$ discussed above. Consider next a gluing compatible section $e$ over $\mathcal{D}_{h;hm,k}$ with $hm+k=3$. This space is an interval and the boundary points correspond to two level curves $S$ with both levels $S_{0}$ and $S_{1}$ in $\mathcal{D}_{h;hm,k}$, $hm+k=2$. Consider a neighborhood of such a two level curve in $\mathcal{D}_{h;hm,k}$ parameterized by a gluing parameter $\rho\in[0,\infty)$, see \eqref{eq:bdrycoord}. Assume that the positive puncture of $S_{1}$ is attached at a negative puncture of $S_{0}$. Write $S(\rho)\in\mathcal{D}_{h;hm,k}$, $hm+k=3$ for the resulting domain and write $S_{j}(\rho)$ for the part of the curve $S(\rho)$ that is naturally a subset of $S_{j}$, see the discussion in Section~\ref{sec:cylends}. Let $g_{r_0(e)}$ and $g_{r_1(e)}$ denote the functions of the component restrictions of $e$ to $S_{0}$ and $S_{1}$. Then there is a constant $c(\rho)$ such that
\begin{equation}\label{eq:supharmcomp} 
c(\rho)=g_{r_0(e)}|_{\partial S_{0}(\rho)}-g_{r_1(e)}|_{\partial S_{1}(\rho)}.
\end{equation} 
We then define the function $g_{e}(\rho)\colon S(\rho)\to\R$ as
\[ 
g_{e}(\rho)=
\begin{cases}
g_{r_0(e)} &\text{ on }S_{0}(\rho),\\
c(\rho)+g_{r_1(e)} &\text{ on }S_{1}(\rho).
\end{cases}
\]
Then $g_{e}(\rho)$ is smooth and satisfies $(\mathbf{I})$, $(\mathbf{III})$, and $(\mathbf{IV})$ and has the required properties for restrictions to levels. Furthermore, the restriction of $g_{e}(\rho)$ to $S_{0}(\rho)$ satisfies \eqref{eq:gcylend-1} with $\sigma w_j$ replaced by $\sigma w(q_0)$, where $w(q_0)$ is the weight of $r_0(e)$ at the negative puncture $q_0$ of $S_{0}$ where $S_1$ is attached (except that the interval in the second equation is not infinite but finite) and the restriction of $g_{e}(\rho)$ to $S_{1}(\rho)$ satisfies \eqref{eq:gcylend-1} with the weights of $r_1(e)$ at the negative ends of $S_{1}$. 
Let $w_j(r_1(e))$ denote the weights at the negative punctures $q_j$ of $S$ which are negative punctures of $S_1$, seen as negative punctures of $S_1$. Then by definition 
$$
w_j=w(q_0)w_j(r_1(e)).
$$
Since 
$$
(w_j)^{\frac 1 2}=(w(q_0)w_j(r_1(e)))^{\frac 1 2}\ge \min(w(q_0),w_j(r_1(e)))
$$
and
$$
\kappa((w_j)^{\frac 1 2})\le \kappa(\min(w(q_0),w_j(r_1(e)))=\max(\kappa(w(q_0)),\kappa(w_j(r_1(e)))),
$$
we find that there exists a strip or cylinder region of length at least $\kappa((w_j)^{\frac 1 2})$ where $g_e(s+it)=\epsilon s+C$, with $0<\epsilon\le 2(w_j)^{\frac 12}$. Thus the two level condition $(\mathbf{II}_{2})$ holds. 


We next want to extend the family of functions over all of $\mathcal{D}_{h;hm,k}$, $hm+k=3$, respecting condition $(\mathbf{II}_{2})$. To this end we consider a neighborhood ${\mathcal{N}^{2}}'$ of the broken curves in the boundary where the glued functions described above are defined. Using the gluing parameter this neighborhood can be identified with a half infinite interval. As the gluing parameter decreases we deform the derivative of the function as follows: we decrease it uniformly below the gluing region and stretch the region near the negative puncture where it is small, until we reach the level one function. See Figure~\ref{fig:necked-weights}. For this family $g_{e}$ conditions $(\mathbf{I})$, $(\mathbf{II}_{2})$, $(\mathbf{III})$, and  $(\mathbf{IV})$ hold everywhere and $(\mathbf{II}_{1})$ holds in the compact subset of $\mathring{\mathcal{D}}_{h;hm,k}$ which is the complement of a suitable subset $\mathcal{N}^{2}\subset {\mathcal{N}^{2}}'$. 

For more general two level curves with $hm+k>3$ lying in $\mathcal{N}_{2}-\mathcal{N}_{3}$ we argue in exactly the same way using the gluing parameter to interpolate between the natural gluing of the functions of the component restrictions of $e$ and the function of $e$, see Lemma \ref{lem:1-supharm}, satisfying the one-level condition.

Consider next the general case. Assume that we have found a family of functions $g_{e}$, associated to a constant section $e$ defined over the subset $\mathcal{D}^{\ell}$ consisting of all curves in $\mathcal{D}$ with at most $\ell$ levels, that satisfies conditions $(\mathbf{I})$, $(\mathbf{III})$, and $(\mathbf{IV})$ everywhere, and assume that there are nested neighborhoods
\[ 
\mathcal{N}^{\ell}\subset \mathcal{N}^{\ell-1}\subset \mathcal{N}^{\ell-2} \subset \dots \subset \mathcal{N}^{2},
\]
were $\mathcal{N}^{j}$ is a neighborhood of $\mathcal{D}^{j}$ in $\mathcal{D}^{\ell}$ such that condition $(\mathbf{II}_{j})$ holds in $\mathcal{N}^{j}-\mathcal{N}^{j-1}$. 

Consider a curve $S$ in the boundary of $\mathcal{D}_{h;hm,k}$ with $\ell+1$ levels. Assume that the top level curve $S_{0}$ of $S$ has $r$ negative punctures at which there are curves $S_{1},\dots,S_{r}$ of levels $\le \ell$ attached. Let $r_j(e)$ denote the component restriction $S_{j}$, $j=0,1,\dots,r$.
Our inductive assumption gives a smooth family of superharmonic functions with properties $(\mathbf{I})$, $(\mathbf{III})$, and $(\mathbf{IV})$ for curves in a neighborhood of these broken configurations depending smoothly on $r_j(e)$. Denote the corresponding functions $g_{r(e_j)}\colon S_{j}\to\R$. Consider now a coordinate neighborhood $U$ of the form~\eqref{eq:bdrycoord} around $S$:
\[
U=U^{0}\times \prod_{i=1}^{r} (0,\infty)_{j}\times U^{j}.
\] 
Let $\rho=(\rho_1,\dots,\rho_{r})$. For curves $S_{j}\in U^{j}$, write $S(\rho)$ for the curve that results from gluing these according to $\rho$ and in analogy with the two level case, write $S_{j}(\rho)$ for the part of $S(\rho)$ that is naturally a subset of $S_{j}$. Our inductive assumption then shows that there are constants $c_{j}(r_0(e),r_j(e),\rho_j)$, $j=1,\dots,r$, such that
\begin{equation}\label{eq:supharmcomp2}
c_{j}(r_0(e),r_j(e),\rho_j)=g_{r_0(e)}^{0}|_{\partial_{j} S_{0}(\rho)}-g_{r_j(e)}|_{\partial S_{j}(\rho)},
\end{equation}
where $\partial_{j} S_{0}(\rho)$ is the boundary component of $S_{0}(\rho)$ where $S_{j}(\rho)$ is attached. Define the function $g_{e}(\rho)\colon S(\rho)\to\R$ as
\[ 
g_{e}(\rho)=
\begin{cases}
g_{r_0(e)} &\text{ on } S_{0}(\rho),\\
g_{r_j(e)}+c_j(r_0(e),r_j(e),\rho_j) &\text{ on } S_{j}(\rho), \quad j=1,\dots,r.
\end{cases}
\]
It is immediate that the function $g_{e}(\rho)$ satisfies $(\mathbf{I})$, $(\mathbf{III})$, and $(\mathbf{IV})$. We show that condition $(\mathbf{II}_{\ell+1})$ holds. Let $q$ be a negative puncture in some $S_j$, $j=1,\dots,r$. Let
$$
w_q^{j\prime}=w_j^0 w_q^j,
$$
where $w_j^0$ is the weight of $r_0(e)$ at the negative puncture of $S_0$ where $S_j$ is attached and where $w_q^{j}$ is the weight of $r_j(e)$ at the negative puncture $q$. Then $w_q^{j\prime}$ is the weight of the puncture $q$ seen as a negative puncture of $S$. Since
$$
(w_q^{j\prime})^{\frac{1}{\ell+1}}=(w_j^0w_q^j)^{\frac{1}{\ell +1}}\ge \min(w_j^0,(w_q^j)^{\frac 1 \ell})
$$ 
and 
$$
\kappa((w_q^{j\prime})^{\frac{1}{\ell+1}})\le \kappa(\min(w_j^0,(w_q^j)^{\frac 1 \ell}))=\max(\kappa(w_j^0),\kappa((w_q^j)^{\frac 1 \ell})),
$$
we deduce that condition $(\mathbf{II}_{\ell+1})$ holds.

This defines $g_{e}(\rho)$ in a collar neighborhood of the boundary of $\mathcal{D}^{\ell+1}$. As in the two level case above we get a family $g_{e}'$ on the complement of half the collar neighborhood, and then by interpolation we obtain a gluing compatible family over all of $\mathcal{D}^{\ell+1}$ that satisfies conditions $(\mathbf{II}_{\ell})$ and $(\mathbf{II}_{\ell+1})$ with respect to an appropriate neighborhood $\mathcal{N}^{\ell+1}$, as required. 
\end{proof}

\begin{figure}
         \begin{center}
\input{necked-weights.pstex_t}
         \end{center}
\caption{$1$-forms with ``necked-weights" and their behavior under gluing.  \label{fig:necked-weights}}
\end{figure}

Using the splitting compatible family of subharmonic functions parameterized by $\mathcal{E}$, we define two families of non-positive 1-forms on the domains in $\mathcal{D}$, likewise parameterized by $\mathcal{E}$. The most basic family is defined as follows. Let $i$ denote the complex structure on the domain $D_{h;hm,k}$ and define
\begin{equation}\label{eq:beta_e}
\beta_{e}=-i^*dg_{e}=\frac{\partial g_{e}}{\partial\sigma}d\tau -\frac{\partial g_{e}}{\partial\tau}d\sigma,
\end{equation}
where $\sigma+i\tau$ is a complex coordinate on $D_{h;hm,k}$. Then we find that
\[
d\beta_{e}=(\Delta g_{e}) d\sigma\wedge d\tau\le 0,
\]
with strict inequality in regions of concavity.

\subsection{Hamiltonians}
We consider two types of Hamiltonians: one for defining the Hamiltonian simplex DGA that we call \emph{one step Hamiltonian} and one for defining cobordism maps between DGAs that we call \emph{two step Hamiltonian}. We use the following convention: if $H\colon X\to\R$ is a Hamiltonian function then we define the corresponding Hamiltonian vector field $X_{H}$ by 
\[
\omega(X_H,\cdot)=-dH.
\]

Let $(X,L)$ be a Weinstein pair with end $[0,\infty)\times (Y,\Lambda)$, and recall our notation $r=e^t$, where $t$ is the coordinate on the factor $[0,\infty)$. We first consider time independent \emph{one step} Hamiltonians $H\colon X\to\R$. Such a function has the following properties:
\begin{itemize}
\item For small $\epsilon>0$, $\frac{\epsilon}{2}\le H \le\epsilon$ and $H$ is a Morse function on the compact manifold with boundary $X\setminus (0,\infty)\times Y$.
\item On $[0,\infty)\times Y$, $H(r,y)=h(r)$ is a function of $r$ only with $h'(r)>0$ and $h''(r)\ge 0$ such that for $r\ge 1$, $H(r)=ar+b$ where $a>0$ and $b$ are real constants. We require that $a$ is distinct from the length of any closed Reeb orbit or of any Reeb chord with endpoints on $\Lambda$.
\end{itemize}

Note that in the symplectization part, where $H=h(r)$, the Hamiltonian vector field is proportional to the Reeb vector field $R$ of the contact form $\alpha$ on $Y$: 
\[
X_H=h'(r)R.
\]

Consider the time 1 flow of the Hamiltonian vector field $X_H$ of $H$. Hamiltonian chords and orbits then come in two classes. Low energy orbits that correspond to critical points of $H$ that we take to lie off of $L$ and low energy chords that correspond to critical points of $H|_{L}$. The low energy chords and orbits are generically transverse. High energy orbits and chords are re-parameterizations of Reeb chords and orbits. The chords are generically transverse but the orbits are generically transverse only in the directions transverse to the orbit but not along the orbit. Following \cite{CFHW}, we pick a small positive time dependent perturbation of $H$ near each orbit based on a Morse function on the orbit that gives two orbits of the time dependent Hamiltonian corresponding to $H$. We call the resulting Hamiltonian a time dependent one step Hamiltonian.

Let $(X_0,L_0)$ be a Weinstein pair with end $[0,\infty)\times (Y_0,\Lambda_0)$ and consider a symplectic cobordism $(X_{10},L_{10})$ with negative end $(Y_0,\Lambda_{0})$ and positive end $(Y_{1},\Lambda_{1})$. Gluing $(X_{10},L_{10})$ to $(X_{0},L_{0})$ we build a new Weinstein manifold $(X_{1},L_{1})$ which contains the compact part of $(X_{0},L_{0})$, connected via $[-R,0]\times(Y_{0},\Lambda_{0})$ to a compact version $(X_{10}',L_{10}')$ of the cobordism, and finally its cylindrical end. Consider time independent \emph{two step} Hamiltonians $H\colon X_{1}\to\R$. Such functions have the following properties: 
\begin{itemize}
\item For small $\epsilon>0$, $\frac{\epsilon}{2}\le H \le\epsilon$ and $H$ is a Morse function on $X_0'$, the complement of $[-R,0]\times Y_0$ in the compact part of $X_0$.
\item On $[-R,-1]\times Y_{0}$, $H(r,y)=h(r)$ is a function of $r$ only with $h'(r)>0$ and $h''(r)\ge 0$ such that for $r\ge -R+1$, $H(r)=ar+b$ where $a>0$ and $b$ are real constants. 
We require that $a$ is distinct from the length of any closed Reeb orbit or Reeb chord with endpoints on $\Lambda_0$ in $Y_0$.
\item On $[-1,0]\times Y_{0}$, $h'(r)\le 0$ and the function becomes constant.
\item Over $X_{10}'$ the function is an approximately constant Morse function.
\item Finally in the positive end the function has the standard affine form of a one step Hamiltonian. 
\end{itemize}

Let $H_1$ be a time dependent one step Hamiltonian on $X_1$ and let $H_0$ be a two step Hamiltonian on $X_1$ with respect to the cobordism $X_{01}$ such that $H_{0}\ge H_{1}$.

\begin{figure}
         \begin{center}
\input{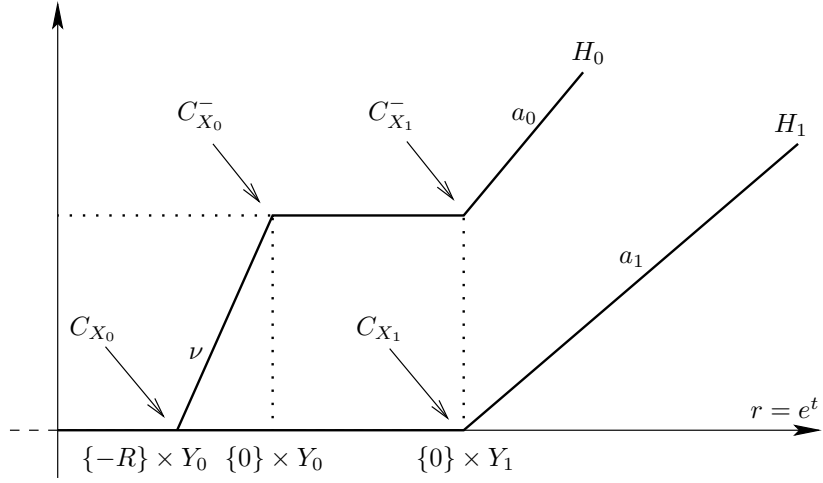}
         \end{center}
\caption{Hamiltonians for cobordism map.  \label{fig:H-cob}}
\end{figure}

We consider chords and orbits of both Hamiltonians. The action of a chord or orbit $\gamma\colon [0,1]\to X$ of $H_j$ is
\[ 
\mathfrak{a}(\gamma)=\int_{0}^{1}\gamma^{\ast}\lambda -\int_{0}^{1}H_j(\gamma(t))\,dt. 
\]
The non-positivity of our 1-forms implies that, if $D_{1;m,k}\in\mathcal{D}_{1;m,k}$ and $u\colon (D_{1;m,k},\partial D_{1;m,k})\to (X_1,L_1)$ lies in the space of solutions of the Floer equation $\mathcal{F}(a;\mathbf{b},\bo{\eta})$ as defined in Section~\ref{sec:Floermodulispaces} below, with $a$ a chord, $\mathbf{b}=b_1\dots b_m$ a word in chords, and $\bo{\eta}=\eta_1\dots\eta_{k}$ a word in periodic orbits, then
\[ 
\mathfrak{a}(a)-(\mathfrak{a}(b_1)+\dots+\mathfrak{a}(b_m))-(\mathfrak{a}(\eta_1)+\dots+\mathfrak{a}(\eta_{k}))\ge 0.
\]
Likewise if $u\in\mathcal{F}(\gamma,\bo{\eta})$ as defined in Section~\ref{sec:Floermodulispaces} below, with $\gamma$ a periodic orbit and $\bo{\eta}=\eta_1\dots\eta_{k}$ a word in periodic orbits, then
\[
\mathfrak{a}(\gamma)-(\mathfrak{a}(\eta_1)+\dots+\mathfrak{a}(\eta_{k}))\ge 0.
\]

\begin{lemma}\label{lemma:actiondecomposition}
The Hamiltonian chords and orbits of $H_1$ decompose into the following subsets:
\begin{itemize}
\item $O_{X_1}$, the chords and orbits that correspond to critical points of $H_1$ in $X_1$. If $\gamma\in O_{X_1}$ then $\mathfrak{a}(\gamma)\approx 0$.
\item $C_{X_1}$, Hamiltonian chords and orbits located near $\{0\}\times Y_1$, and corresponding to Reeb chord and orbits in $(Y_1,\alpha_1)$. If $\gamma\in C_{X_1}$ then $\mathfrak{a}(\gamma)>0$.
\end{itemize}

The Hamiltonian chords and orbits of $H_0$ decompose into the following subsets:
\begin{itemize}
\item $O_{X_0}$, the chords and orbits that correspond to critical points of $H_0$ in $X_0$. If $\gamma\in O_{X_0}$ then $\mathfrak{a}(\gamma)\approx 0$.
\item $C_{X_0}$, Hamiltonian chords and orbits located near $\{-R\}\times Y_0$. If $\gamma\in C_{X_0}$ then $\mathfrak{a}(\gamma)>0$.
\item $C_{X_0}^{-}$, Hamiltonian chords and orbits located near $\{0\}\times Y_0$. Given $\nu>0$, if $R$ is chosen small enough then every $\gamma\in C^-_{X_0}$ has $\mathfrak{a}(\gamma)<0$. 
\item $O_{X_{01}}$, the chords and orbits that correspond to critical points of $H_0$ in $X_{01}$. If $\gamma\in O_{X_{01}}$ then $\mathfrak{a}(\gamma)<0$.
\item $C_{X_1}^{-}$, Hamiltonian chords or orbits located near $\{0\}\times Y_1$. If $a_0<\nu(1-e^{-R})$ then for any chord or orbit $\gamma^{-}\in C_{X_1}^{-}$ we have $\mathfrak{a}(\gamma^{-})<0$.
\end{itemize}
\end{lemma}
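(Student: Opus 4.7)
The plan is to reduce the lemma to a case-by-case action computation using two elementary facts. First, in every symplectization region where $H=h(r)$ depends only on $r=e^{t}$, the Hamiltonian vector field $X_H = h'(r)\,R_\alpha$ is tangent to the slices $\{r=\mathrm{const}\}$, so a $1$-periodic orbit (respectively a time-$1$ chord with endpoints on $L$) is forced to live at a fixed level $r=r_0$ where $|h'(r_0)|$ equals the period of a closed Reeb orbit (respectively the length of a Reeb chord with endpoints on $\Lambda$). Plugging this into the action functional and using $\lambda=e^t\alpha$, $\alpha(R_\alpha)=1$, a direct calculation yields
\[
\mathfrak{a}(\gamma)=\int_0^1\gamma^*\lambda-\int_0^1 H\circ\gamma\,dt = r_0\,h'(r_0)-h(r_0).
\]
Second, for a constant orbit at a critical point $p$ of $H$ one has $\mathfrak{a}(\gamma)=-H(p)$. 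The rest of the argument is to substitute the explicit shape of $H_1$ and $H_0$ in each region and read off the sign.

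The classes $O_{X_1}$ and $O_{X_0}$ consist of constant orbits in the compact part where the Hamiltonian lies in $[\epsilon/2,\epsilon]$, so $\mathfrak{a}(\gamma)\approx -\epsilon\approx 0$. Similarly $O_{X_{01}}$ consists of critical points in $X_{01}'$, where $H_0$ takes an approximately constant strictly positive value, giving $\mathfrak{a}(\gamma)<0$. For the high-energy classes $C_{X_1}$ and $C_{X_0}$ the orbits live in a convex, increasing ramp attached to a small constant region; the tangent-line bound from convexity, $h(r_0)\le h(r_*)+h'(r_0)(r_0-r_*)$ with $r_*$ the bottom of the ramp and $h(r_*)\approx\epsilon$, gives
\[
\mathfrak{a}(\gamma)\ge r_*\,h'(r_0)-h(r_*),
\]
which is strictly positive because $h'(r_0)$ is a Reeb period and $\epsilon$ is chosen small.

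The class $C_{X_0}^{-}$ is handled by the opposite sign computation: on $[-1,0]\times Y_0$ one has $h'\le 0$, while $r_0>0$ and $h(r_0)>0$ throughout the decreasing region, so both summands in $\mathfrak{a}=r_0h'(r_0)-h(r_0)$ are non-positive and $\mathfrak{a}(\gamma)<0$ automatically. Taking $R$ small concentrates $r_0$ near $1$ and yields a uniform strictly negative upper bound on $\mathfrak{a}$, which is what separates $C_{X_0}^{-}$ cleanly from $O_{X_{01}}$ and $C_{X_0}$.

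The delicate case, and the main obstacle, is $C_{X_1}^{-}$. These orbits sit in the transition near $\{0\}\times Y_1$ for the Hamiltonian $H_0$, so the action takes the form $\mathfrak{a}(\gamma^-)\approx r_0 T - h(1)$, where $T$ is the relevant Reeb period in $Y_1$ and $h(1)$ is the (nearly constant) value of $H_0$ on $X_{01}'$. The plan is to bound $h(1)$ in terms of the total variation of $H_0$ over the first ramp $[-R,-1]\times Y_0$: integrating $h'$, one obtains an estimate of the form $h(1)\lesssim a_0(e^{-1}-e^{-R})+\epsilon$. Combining this with the convexity bound for $h(r_0)$ in the second ramp and with the choice of $\nu$ controlling the Reeb periods realized at orbits of $C_{X_1}^{-}$, the hypothesis $a_0<\nu(1-e^{-R})$ translates precisely into the inequality $h(r_0)>r_0 T$ for every such orbit, and hence into $\mathfrak{a}(\gamma^-)<0$. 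The core of the argument is to carry through these estimates in the correct direction, distinguishing the orbits that belong to $C_{X_1}^{-}$ from those that belong to the high-action class of high-energy orbits for $H_0$ in the same cylindrical end.
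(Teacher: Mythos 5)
The paper's own proof of this lemma is the single phrase ``Straightforward calculation,'' so your job here was indeed to supply that calculation, and your two basic formulas are exactly the right ones: $\mathfrak{a}(\gamma)=r_0\,h'(r_0)-h(r_0)$ for an orbit or chord sitting at a fixed symplectization level $r=r_0$ (using $X_H=h'(r)R_\alpha$ and $\lambda=r\alpha$), and $\mathfrak{a}(\gamma)=-H(p)$ for the constant solutions at critical points of $H$. The sign analysis you run for $O_{X_1}$, $O_{X_0}$, $O_{X_{01}}$, $C_{X_1}$, $C_{X_0}$, and $C_{X_0}^-$ all checks out, and the convexity/tangent-line bound $\mathfrak{a}\ge r_*h'(r_0)-h(r_*)$ is the standard way to get strict positivity for the high-energy classes with $\epsilon$ small.

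The one place where you go astray is the $C_{X_1}^-$ estimate. You need a \emph{lower} bound on the plateau value $h(r_0)\approx h_{X_{01}'}$ to conclude $\mathfrak{a}(\gamma^-)=r_0T-h(r_0)<0$, but you write the estimate as an upper bound ``$h(1)\lesssim\cdots$'', which is in the wrong direction for the conclusion you want. Moreover, you describe $\nu$ as ``controlling the Reeb periods realized at orbits of $C_{X_1}^-$''; with the roles the lemma assigns to the two slopes, it is $a_0$ (the slope of $H_0$ near $\{0\}\times Y_1$) that bounds those Reeb periods from above, while $\nu$ is the slope of the first ramp near $\{-R\}\times Y_0$, whose total rise $\approx\nu(1-e^{-R})$ controls the plateau value. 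With that swap, the hypothesis $a_0<\nu(1-e^{-R})$ reads off as $T<a_0<\nu(1-e^{-R})\lesssim h(r_0)$, which is the inequality you need. So the logic is salvageable, but as written, the inequality direction and the attribution of $a_0$ versus $\nu$ are reversed.

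You also claim negativity of $\mathfrak{a}$ on $C_{X_0}^-$ is automatic and that the role of ``$R$ small'' is only to get a uniform bound. The formula $\mathfrak{a}=r_0h'(r_0)-h(r_0)$ with $h'\le 0$ and $h>0$ does give $\mathfrak{a}<0$ directly, so your reading is reasonable; the extra hypothesis on $R$ and $\nu$ in the lemma is likely there for quantitative separation of the action windows (and to make the various ``$\approx$'' uniform as the slope goes to infinity in the direct limit), which is consistent with your remark. This part is fine.
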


\begin{proof}
Straightforward calculation.
\end{proof}

\subsection{Non-positive 1-forms of Hamiltonian vector fields}\label{sec:hamilt1form}
Let $H$ be a one step time independent Hamiltonian and $H_{t}$, $t\in[0,1]$ an associated time dependent one step Hamiltonian. 
We next define non-positive 1-forms with values in Hamiltonian vector fields parameterized by splitting compatible constant sections of $\mathcal{E}$. As before, our construction is inductive.

Denote $I=[0,1]$ and $S^1=I/\partial I$. For cylinders, strips, and cylinders with a slit with coordinates $s+it$, $s\in\R$, $t\in I$ or $t\in I/\partial I$ we use the time dependent Hamiltonian throughout and define
\[
B=X_{H_{t}}\otimes dt.
\]
For $x\in X$, the associated 2-form is $d(H_{t}(x)\,dt)=0$ and $B$ is non-positive.

Consider next disks and spheres in $D_{h;hm,k}$ with $hm+k=2$. Fix a cut-off function $\psi\colon D_{h;hm,k}\to [0,1]$ which equals $0$ outside the cylindrical ends, which equals $1$ in a neighborhood of each cylindrical end, and such that $d\psi$ has support in the regions of concavity only. Furthermore, we take the cut-off function to depend on the first coordinate only in the cylindrical end $[0,\infty)\times S^{1}$ or $(-\infty,0]\times S^1$ at interior punctures and $[0,\infty)\times[0,1]$ or $(-\infty,0]\times [0,1]$ at boundary punctures. Let $H_{t}$, $t\in I$, denote the time dependent one step Hamiltonian and $H$ the time independent one, chosen such that $H_{t}(x)\ge H(x)$ for all $(x,t)\in X\times I$. Let $H^{\psi}_{t}=(1-\psi)H+\psi H_{t}$. Define
\[
B=X_{H^{\psi}_{t}}\otimes\beta.
\]
For $x\in X$ the associated $2$-form is as follows: in the complements of cylindrical ends near the punctures it is given by
\[
d(H^{\psi}_{t}(x)\beta)=H(x)d\beta\le 0,
\]
and in the cylindrical ends near the punctures, with coordinates $s+it$, by
\[
d(H^{\psi}_{t}(x)\beta)=
\psi'(s)(H_{t}(x)-H(x))ds\wedge \beta+ ((1-\psi)H+\psi H_{t})d\beta\le 0, 
\]
where the last inequality holds provided $H_{t}$ is sufficiently close to $H$, so that the second term dominates when the first is non-vanishing. (Here we used that $dt\wedge\beta=0$ in the cylindrical end.) We now extend this field of 1-forms with values in Hamiltonian vector fields over all of $\mathcal{D}$ using induction. For one level curves in the interior of $\mathcal{D}_{h;hm,k}$ a straightforward extension of the above including more than two ends gives a non-positive form. For several level curves, gluing the 1-forms of the components define forms with desired properties in a neighborhood of the boundary of $\mathcal{D}_{h;hm,k}$. Finally, we interpolate between the two fields of forms over a collar region near the boundary using the interpolation of the form part $\beta$, see Lemma \ref{lemma:ell-levelsupharm}. We denote the resulting form with non-positive differential by $B$. 

Consider next the case of two step Hamiltonians. 
As for the one level Hamiltonians we insert a small time dependent perturbation near all Reeb orbits of positive action and we get a 0-order term $B$ exactly as above, just replace the one step Hamiltonian with the two step Hamiltonian everywhere.

We will consider one further type of 1-form with values in Hamiltonian vector fields that we use to interpolate between one step and two step Hamiltonians. Let $H_0=H$ be the two step Hamiltonian above and let $H_1$ be a one step Hamiltonian on $X_1$ with $H_{1}\le H_0$ everywhere.  Let $\phi_{T}\colon\R\to\R$ be a smooth function with non-positive derivative supported in $[-1,1]$ such that $\phi_{T}=1$  in $(-\infty,-1+T]$ and $\phi_{T}=0$ in $[1+T,\infty)$. Recall the superharmonic field of functions $g=g_e$, $e\in\mathcal{E}$ and let $B_{0}$ and $B_{1}$ be the fields of 1-forms parameterized by $\mathcal{E}$ associated to $H_0$ and $H_{1}$, respectively, constructed above. Fix a diffeomorphism $T\colon (0,1)\to\R$. Then the \emph{interpolation form}
\begin{equation}\label{eq:interpol1form}
B_{\tau}=(1-\phi_{T(\tau)}\circ g)B_{1}+(\phi_{T(\tau)}\circ g)B_{0} 
\end{equation}
is a 1-form with values in Hamiltonian vector fields (of the Hamiltonian $(1-\phi_{T(\tau)})H_1+\phi_{T(\tau)} H_0$). We check that it is non-positive. For fixed $x\in X_1$, the associated 2-form is 
\begin{align*} 
dB_{\tau}&=d((1-\phi_{T}\circ g)H_1(x)\beta+(\phi_{T}\circ g)H_{0}(x)\beta)\\
&=(1-\phi_{T}\circ g)d(H_{1}\beta)+(\phi_{T}\circ g)d(H_{0}\beta)\\
&+\phi_{T}'(g)(H_0(x)-H_1(x))dg\wedge\beta\le 0,
\end{align*}   
where the inequality follows since the first term is a convex combination of non-positive forms and the second is non-positive as well since $\beta=-i^{\ast}dg$. Note also that for $\tau=0$ and $\tau=1$, $B_{\tau}=B_{0}$ and $B_{\tau}=B_{1}$, respectively.

\subsection{Determinant bundles and orientations}
We use the field of 1-forms $B$ parameterized by constant splitting compatible sections of $\mathcal{E}$ and almost complex structures over $\mathcal{D}$ to define the Floer equation
\[ 
\bar{\pa}_{F}u=(du-B)^{0,1}=0
\]
for $u\colon (D_{h;hm,k},\partial D_{h;hm,k})\to (X,L)$. In order to study properties of the solution space we will consider the corresponding linearized operator $L\bar\pa_{F}$ which maps vector fields  $v$ with one derivative in $L^{p}$ into complex anti-linear maps $L\bar\pa_{F}(v)\colon T_{z}D_{h;hm,k}\to T_{u(z)} X$, in case of non-empty boundary the vector fields are tangent to $L$ along the boundary. The linearized operator is elliptic and it defines an index bundle over the space of maps. This index bundle is orientable provided the Lagrangian $L$ is relatively spin as was shown in \cite{FO3}. In this paper we will not use specifics of the index bundle beyond it being orientable. We will however use it to orient solution spaces of the Floer equation. For that purpose we fix capping operators for each Hamiltonian chord and orbit and use linear gluing results to find a system of coherent orientations of the index bundle. The main requirement here is that the positive and negative capping operator at each chord or orbit glues to the operator on a disk or sphere which has a fixed orientation of the index bundle over domains without punctures. The details of this linear analysis are similar to \cite{EES_ori} for chords and      
\cite{EGH} for orbits. There is however one point where the situation in this paper differs. Namely, our main equation depends on extra parameters corresponding to the simplex and the orientations we use depend on this. In order to get the right graded sign behavior for our Hamiltonian simplex DGA we will use the following conventions.

The index bundle corresponding to the parameterized problem is naturally identified with the index bundle for the un-parameterized problem stabilized by the tangent space of the simplex. Here we use the following orientation convention for the simplex. The simplex is given by the equation
\[ 
w_1+\dots+w_m = 1
\]  
and we think of its tangent space stably as the kernel-cokernel pair $(\R^{m},\R)$. We use the standard oriented basis $\pa_1,\dots,\pa_m$ of $\R^{m}$ and $\pa_0$ of $\R$. We then think of the direction $\pa_j$ as a stabilization of the capping operator of the $j^{\rm th}$ negative puncture and of $\pa_0$ as a stabilization of that at the positive puncture and get the induced orientation of the index bundle over $\mathcal{E}$ by gluing these stabilized operators. Then the index bundle orientations reflect Conley-Zehnder/Maslov grading in the DGA as usual. We give a more detailed discussion of index bundles and sign rules in the DGA in Appendix~\ref{A:1}.

\section{Properties of Floer solutions}
In this section we establish two basic results about Floer holomorphic curves. First we prove that the $\R$-factor of any Floer holomorphic curve in the cylindrical end of a Weinstein manifold satisfies a maximum principle. This result allows us to establish the correct form of Gromov-Floer compactness for our theories. Second we establish an elementary energy bound that ensures our Floer equations do not have any solutions with only high-energy asymptotes near the boundary of the parameterizing simplex. 

\subsection{A maximum principle for solutions of Floer equations}\label{s:maxprinciple}
Consider a $1$-parameter family of fields of splitting compatible $1$-forms $B=B_{\tau}$, $\tau\in[0,1]$, parameterized by constant sections of $\mathcal{E}$ and constructed from one step and two step Hamiltonians as in Section \ref{sec:hamilt1form}. (Fields of forms constructed from a one step Hamiltonian only, appear here as special cases corresponding to constant $\tau$, $\tau = 1$.) Let $J$ be a splitting compatible field of almost complex structures over $\mathcal{D}$. Recall that this means in particular that if $S=D_{h;hm,k}\in\mathcal{D}_{h;hm,k}$ then $J_{z}$ is an almost complex structure on $X$ for each $z\in S$ such that in any cylindrical end with coordinate $s+it$, $J_{s+it}=J_{t}$, , see Section \ref{sec:acs}.

We make the following \emph{non-degeneracy} assumption. The one and two step Hamiltonians are both linear at infinity $H(r,y)=h(r)=ar+b$ for real constants $a>0$ and $b$. We assume that the length $\ell$ of any Reeb orbit or Reeb chord satisfies
\begin{equation}\label{eq:non-degeneracy}
\ell\ne a.
\end{equation}  
Note that the set of Reeb chord and orbit lengths is discrete and hence the condition on the Hamiltonians holds generically. 

Consider now a solution $u\colon S\to X$ of the Floer equation
\[ 
(du-B)^{0,1}=0,
\]
where the complex anti-linear component of the map $(du-B)\colon T_{z} S\to T_{u(z)}X$ is taken with respect to the almost complex structures $J_{z}$ on $X$ and $j$ on $S$. 

\begin{lemma}\label{lem:belowpos}
If the non-degeneracy condition at punctures \eqref{eq:non-degeneracy} is satisfied then $u(S)$ is contained in the compact subset $\{r\le 1\}$. 
\end{lemma}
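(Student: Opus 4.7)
The plan is to establish the maximum principle via a subharmonicity argument for $\rho := r\circ u$ on the open subset $\Omega := \{z \in S : \rho(z) > 1\}$, and then show $\Omega$ must be empty. On $\Omega$ the image of $u$ lies in the linear region $H = ar + b$ of the cylindrical end of $X$, so $X_H = aR$; splitting compatibility of $J$ gives $J\partial_t = R$ and $J\xi = \xi$. Writing $u = (\tau, f)$ with $\tau = t\circ u = \log\rho$ and $f : S \to Y$, the $0$-order term $B$ reduces on $\Omega$ to $aR\otimes \beta$ for a $1$-form $\beta$ on $S$ (either $\beta_e$ of \eqref{eq:beta_e}, or the combination used in the interpolation form \eqref{eq:interpol1form}); in every case $d\beta \le 0$.

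First I would project the Floer equation $(du - B)^{0,1} = 0$ onto the $\partial_t$- and $R$-components using $J\partial_t = R$. This yields the pair
$$
d\tau\circ j \;=\; f^*\alpha - a\beta,
\qquad
f^*\alpha\circ j \;=\; -d\tau,
$$
together with a $J$-holomorphic condition on the $\xi$-component of $df$. Taking $d$ of the first relation, using $\Delta\tau\, d\sigma\wedge d\tau = -d(d\tau\circ j)$, and combining with the taming of $d\alpha|_\xi$ by $J|_\xi$ applied to the $\xi$-equation, I obtain pointwise
$$
\Delta\tau \, d\sigma\wedge d\tau \;\ge\; -a\, d\beta \;+\; (\text{non-negative term from } |df_\xi|^2).
$$
Since $d\beta \le 0$ and the second term is non-negative, this gives $\Delta\tau \ge 0$ on $\Omega$, i.e.\ $\rho$ is weakly subharmonic. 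The interpolation form of \eqref{eq:interpol1form} introduces the extra $2$-form $\phi_{T(\tau)}'(g)(H_0-H_1)\,dg\wedge\beta$ which, by the computation following \eqref{eq:interpol1form}, carries the same sign and does not disturb the inequality.

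Next I would analyze the boundary of $\Omega$. The interior portion of $\partial\Omega$ in $S$ lies in $\{\rho = 1\}$. Along $\partial S$, where $u$ takes values in $L = [0,\infty)\times \Lambda$, the Lagrangian is invariant under the Liouville flow so $\partial_t$ is tangent to $L$; combined with the Neumann condition $(\mathbf{IV})$ for $g_e$ (and hence for $\beta$), the normal derivative of $\rho$ along $\partial S$ vanishes, and a Hopf-type boundary maximum principle rules out interior maxima of $\rho$ on $\partial S$. At each asymptotic puncture, $u$ converges to a Hamiltonian orbit or chord, i.e.\ a reparameterized Reeb orbit/chord whose period in the linear region would have to equal $a$; by the non-degeneracy condition \eqref{eq:non-degeneracy}, no Reeb orbit or chord has length $a$, so all asymptotic limits live in the transition region $\{h'' > 0\} \subset \{r < 1\}$. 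Therefore $\rho \le 1$ on all of $\partial\Omega$ and at the ends, and the weak maximum principle forces $\rho \le 1$ throughout $\Omega$, a contradiction unless $\Omega = \varnothing$.

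The main obstacle will be the subharmonicity calculation of Step~1: one has to bookkeep carefully the contribution of the cut-off $\psi$ that blends $H$ and $H_t$, the $\xi$-component of the Floer equation that contributes the $|df_\xi|^2$ term, and the interpolation term $\phi_{T(\tau)}'(g)(H_0-H_1)\,dg\wedge\beta$ from \eqref{eq:interpol1form}, and verify that all contributions carry the correct sign. A secondary technical point is the Hopf lemma on $\partial S$, where one has to note that although $\beta$ need not vanish on $\partial S$, its contribution to $d\tau$ along the normal is cancelled by $(\mathbf{IV})$, reducing the boundary analysis to the standard Neumann case.
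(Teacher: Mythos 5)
Your proposal takes a genuinely different route from the paper. The paper computes the energy of $u$ restricted to $S' = \{r\circ u \ge r'\}$ for a regular level $r'>1$, bounds it from above via Stokes' theorem by a boundary term which it then shows to be nonpositive using $\alpha|_L=0$, $\beta|_{\partial S}=0$ and the direction of the outward normal along $\partial_{r'}S'$; this forces $du=X_H\otimes\beta$ on $S'$, and varying the regular level yields a contradiction. You instead project the Floer equation in the linear end onto $\R\partial_t\oplus\R R$ and $\xi$, deduce subharmonicity of $\tau=\log(r\circ u)$, and run a Neumann maximum principle using $(\mathbf{IV})$ for $\partial S$ and the non-degeneracy condition \eqref{eq:non-degeneracy} to keep the asymptotics in $\{r<1\}$. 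Both proofs hinge on exactly the same two positivity inputs ($d\beta\le 0$ and $J$-compatibility of $d\alpha|_\xi$) and the same boundary conditions; the Stokes argument is essentially the integrated form of your pointwise inequality and is somewhat less sensitive to regularity of $\rho$ and of $\partial\Omega$, while the pointwise argument is more transparent.

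There is, however, a sign error in your projected equations. Applying $j$ to your first relation $d\tau\circ j = f^*\alpha - a\beta$ gives $-d\tau = f^*\alpha\circ j - a\,\beta\circ j$, which together with your second relation $f^*\alpha\circ j = -d\tau$ would force $\beta\equiv 0$; the two equations as written are incompatible. The correct projections of $J(du-B)=(du-B)\circ j$ onto the $\partial_t$ and $R$ directions (using $J\partial_t=R$, $JR=-\partial_t$, $B=aR\otimes\beta$) are
\[
d\tau\circ j \;=\; a\beta - u^*\alpha, \qquad\qquad d\tau \;=\; (u^*\alpha - a\beta)\circ j,
\]
which are mutually consistent. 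Combined with $d(d\tau\circ j)=-\Delta\tau\,d\sigma\wedge d\tau$ (for the orientation convention $j\partial_\sigma=\partial_\tau$), this gives
\[
\Delta\tau\,d\sigma\wedge d\tau \;=\; -a\,d\beta \;+\; u^*d\alpha \;\ge\; 0,
\]
the desired subharmonicity. With the signs you wrote, the identical computation would produce $\Delta\tau\le 0$, i.e.\ superharmonicity, and the maximum-principle step would fail. Once the signs are fixed, the rest of the argument — $\partial_\nu\tau=0$ on $\partial S$ from $\beta|_{\partial S}=0$ (condition $(\mathbf{IV})$) together with $\alpha|_L=0$, compactness of $\overline{\Omega}$ away from the punctures from \eqref{eq:non-degeneracy}, and Hopf's lemma (or doubling across $\partial S$) to exclude maxima on the Lagrangian boundary — is sound.
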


\begin{proof}
Assume that there exists $z\in S$ such that 
\begin{equation}\label{eq:wrong}  
r(z)=r(u(z))> 1.
\end{equation} 

Following \cite[\S7]{Abouzaid-Seidel}, we show that equation~\eqref{eq:wrong} leads to a contradiction. Fix a regular value $r'>1$ of the smooth function $r\circ u$ such that
\[ 
S'=\{z\in S\colon r\circ u(z)\ge r'\}\neq\varnothing.
\]
Then $S'$ is a Riemann surface with boundary with corners and its boundary can be decomposed as $\partial S'=\partial_{r'} S' \cup \partial_{L} S'$, where $u(\partial_{r'} S')\subset \{r=r'\}$ and $u(\partial_{L} S')\subset L$. Here both $\partial_{r'} S'$ and $\partial_{L} S'$ are finite unions of circles and closed intervals. The intervals in $\partial_{r'} S'$ and $\partial_{L} S'$ intersect at their endpoints that are the corners of $\partial S'$. 

Define the energy of $u\colon S\to X$ to be 
$$
E(u)=\frac 1 2\int_S \|du-B\|^2,
$$
where we measure the norm with respect to the metric $\omega(\cdot,J_{z}\cdot)$.
A straightforward computation shows that 
$$
E(u)=\int_S u^*\omega - u^*dH_{z}\wedge \beta.
$$

Recall the 2-form $\theta(x)=d(H_z(x)\beta)$ associated to $B$ for fixed $x\in X$,  and recall that the non-positivity of $B$ says that $\theta(x)$ is a non-positive 2-form for each $x\in X$. Consider the energy of $S'$, 
\begin{align*}
E(u|_{S'})  &=  \int_{S'} u^*\omega - u^*dH_{z}\wedge \beta \\
&\le \int_{S'} u^*\omega - u^*dH_{z}\wedge \beta - \theta(u(z))\\
&=  \int_{\partial S'} u^*\,r\alpha - H_{z}(u(z))\beta.
\end{align*} 
Since $\alpha|_L=0$ and  $\beta|_{\partial S}=0$, and since $H_{z}(r,y)=ar+b$ in the region $\{r\ge 1\}$ where $b<0$ for $a>0$ sufficiently large, the last integral satisfies 
\begin{align*}
\int_{\partial S'} u^*\,r\alpha - H_{z}(u(z))\beta &= \int_{\partial_{r'} S'} u^*\,r\alpha - a\,u^*r\,\beta-b\beta\\
&= \int_{\partial_{r'} S'} u^*\,r\alpha - a\,u^*r\,\beta -b\int_{S'}d\beta\\
&\le \int_{\partial_{r'} S'} u^*\,r\alpha - a\,u^*r\,\beta\\
&=r'\int_{\partial_{r'} S'} \alpha\circ (du-X_H\otimes \beta)\\
&=r'\int_{\partial_{r'} S'} \alpha\circ J_{z}\circ (du-X_H\otimes\beta)\circ (-i)\le 0,
\end{align*}
where $i$ is the complex structure on $S$. Here we use the identities $\alpha\circ J_{z}=dr$ and $dr(X_{H_{z}})=0$. The last inequality uses that $u(S')$ is contained in $\{r\ge r'\}$. Indeed, if $v$ is a positively oriented tangent vector to $\partial_{r'} S'$, then $-i v$ points outwards, and therefore $d(r\circ u)(-iv)\le 0$. 

We find that $E(u|_{S'})\le 0$, which implies that $u$ satisfies $du-X_H\otimes\beta=0$ on $S'$. Since $u$ intersects the level $r=r'$, it then follows that the image of any connected component of $S'$ under $u$ is contained in the image of a Reeb orbit or chord in this level set. Note that this conclusion is independent of the choice of regular level set $r'>1$ such that $S'=u^{-1}(\{r\ge r'\})\neq\varnothing$. Since such regular level sets exist (and are actually dense) in the interior of the original interval $(1,r')$, we get a contradiction. 
\end{proof}

\subsection{An action bound} \label{sec:action_bound}
In this section we establish an elementary action bound that we will use to show that our $\mathcal{E}$-families of Floer equations have no solutions with only high energy asymptotes near the boundary of the fiber simplex. Consider a Weinstein manifold with an exact Lagrangian submanifold $(X,L)$ and let $H_{t}$ be a one or two step time dependent Hamiltonian as above and let $\epsilon_{0}>0$ denote the smallest value of the action 
\begin{equation}\label{eq:action}
\mathfrak{a}(\gamma)=\int_{\gamma} \alpha - H_{t} dt
\end{equation}
of a Hamiltonian chord or orbit $\gamma$ corresponding to a Reeb chord or orbit. Then any high energy chord or orbit has action at least $\epsilon_0$. 

Let $u\colon S\to X$, $S=D_{h;hm,k}$ be a solution of the Floer equation 
\[ 
(du-B)^{0,1}=0
\]
asymptotic at the positive puncture to a periodic orbit or chord $\gamma$. 

\begin{lemma}\label{l:lowenergy} 
There are constants $L,\epsilon>0$ such that the following holds for any $L'\ge L$ and $0<\epsilon'\le \epsilon$. If there is a strip region $V=[0,L']\times I$ or cylinder region $V=[0,L']\times S^{1}$ in $S$ of length $L'$ that separates a negative puncture $q$ from the positive puncture $p$ and such that $B= X_{H}\otimes \epsilon dt$, in standard coordinates $s+it$ in $V$, where $H$ does not depend on $s+it$, then $q$ maps to a low energy chord or orbit.  
\end{lemma}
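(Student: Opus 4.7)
The plan is to argue by contradiction, assuming the asymptote $\gamma_q$ at $q$ is a high-energy chord or orbit with $\mathfrak{a}(\gamma_q)\ge \epsilon_0$, and to derive a quantitative upper bound on $L'$ that contradicts the hypothesis $L'\ge L$ for $L$ chosen large enough (depending on the fixed data, including the asymptote $\gamma$ at the positive puncture~$p$).

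Because $H$ is time-independent on $V$ and $d(\epsilon'\, dt)=0$, the Floer equation on $V$ is the autonomous negative-gradient flow of the action
\[
\tilde{\mathcal{A}}(x)=\int_0^1 x^{\ast}\lambda-\epsilon'\int_0^1 H(x(t))\,dt,
\]
and a direct computation using $\omega(X_H,\cdot)=-dH$ gives $\tfrac{d}{ds}\tilde{\mathcal{A}}(u(s,\cdot))=\int_0^1|\partial_s u|^2\,dt\ge 0$, so $\tilde{\mathcal{A}}(u(s,\cdot))$ is non-decreasing on $[0,L']$ and $E(u|_V)=\tilde{\mathcal{A}}(u(L',\cdot))-\tilde{\mathcal{A}}(u(0,\cdot))$. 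Applying the Stokes-type action-energy argument of Section~\ref{sec:action_bound} (in the form that proves the action inequality there) to the sub-Riemann surface $W\subset S$ bounded by the puncture $q$ and the internal slice $\{0\}\times I\subset V$, using $\lambda|_L=0$, the vanishing of $\beta$ on the Lagrangian boundary, and the non-positivity $\int_W H(u)\,d\beta\le 0$, one obtains
\[
\tilde{\mathcal{A}}(u(0,\cdot))\ge \mathfrak{a}(\gamma_q)\ge \epsilon_0.
\]
By monotonicity, $\tilde{\mathcal{A}}(u(s,\cdot))\ge \epsilon_0$ throughout $V$.

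The maximum principle of Lemma~\ref{lem:belowpos} confines $u(S)$ to the compact set $K=\{r\le 1\}$, on which $|H|\le M$ and $\|\lambda\|_{C^0(K)}$ are finite. Choosing $\epsilon'\le \epsilon_0/(2M)$ yields $\int u(s,\cdot)^{\ast}\lambda\ge \epsilon_0/2$, so each slice has length at least $\epsilon_0/(2\|\lambda\|_{C^0(K)})$. On the other hand, the Floer equation $\partial_t u=\epsilon' X_H(u)+J\,\partial_s u$ and Cauchy--Schwarz give
\[
\mathrm{length}(u(s,\cdot))\le \epsilon'\|X_H\|_{C^0(K)}+\Bigl(\int_0^1|\partial_s u(s,t)|^2\,dt\Bigr)^{1/2}.
\]
Picking $s_0\in[0,L']$ so as to minimize the $L^2$-norm on the slice produces the mean-value bound $\int_0^1|\partial_s u(s_0,\cdot)|^2\,dt\le E(u|_V)/L'$. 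Choosing $\epsilon'$ also small enough that $\epsilon'\|X_H\|_{C^0(K)}\le \epsilon_0/(4\|\lambda\|_{C^0(K)})$ and combining the two length bounds at $s=s_0$ forces
\[
E(u|_V)\ge L'\,\Bigl(\frac{\epsilon_0}{4\|\lambda\|_{C^0(K)}}\Bigr)^2.
\]
Since the global action inequality bounds $E(u)$ by a constant $C_\gamma$ depending only on the asymptote $\gamma$ at $p$ and on the fixed Hamiltonian data, we conclude $L'\le L:=16\,C_\gamma\|\lambda\|_{C^0(K)}^2/\epsilon_0^2$, contradicting $L'>L$.

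The main technical obstacle is the sub-regional action bound $\tilde{\mathcal{A}}(u(0,\cdot))\ge \mathfrak{a}(\gamma_q)$ in the second paragraph. The Stokes computation on $W$ must contend with the time-dependence of $H_t$ near $q$ and with the cut-off interpolation $H^\psi_t=(1-\psi)H+\psi H_t$ introduced in Section~\ref{sec:hamilt1form}; these contribute extra boundary and cut-off terms that must be shown to yield a bounded, and in the relevant limit negligible, correction. Handling this correction cleanly—so that the high-energy lower bound $\epsilon_0$ still dominates at $s=0$—is what makes the estimate genuinely elementary rather than merely formal.
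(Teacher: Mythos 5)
Your proof is correct and uses the same ingredients as the paper's: the global action--energy inequality, a mean-value selection of a slice in $V$ with small energy, the estimate relating slice action to slice length, and a Stokes computation on the sub-surface containing $q$; the paper runs this directly (find a slice $\gamma'$ of small action, then transfer to $\mathfrak{a}(\gamma_q)$ via Stokes on $S'$) while you run it by contradiction using the monotone action along $V$, but these are the same estimates read in opposite directions, and the monotonicity is itself a byproduct of the Stokes computation rather than an additional input. The ``main technical obstacle'' you flag is in fact a non-issue: the sub-regional bound $\tilde{\mathcal{A}}(u(0,\cdot))\ge\mathfrak{a}(\gamma_q)$ is precisely the action--energy inequality of Section~\ref{sec:action_bound} applied to $W$, driven solely by the pointwise non-positivity $d(H^{z}_{t}(x)\,\beta)\le 0$, which holds on all of $S$ -- including the cut-off regions and near the punctures -- by construction of $B$, so no correction terms appear and the time-dependence of $H_t$ is entirely absorbed.
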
   

\begin{remark}\label{r:consts}
As will be seen from the proof, the constant $L>0$ depends only on $\epsilon_0$, the action $E_{+}$ of the Hamiltonian chord or orbit at $p$, $M=\max_{\{r\le 1\}} H$, and $C=\max_{\{r\le 1\}} \|\alpha\|$, while the constant $\epsilon>0$ depends also on $F=\max_{\{r\le 1\}} \|X_H\|$.  
\end{remark}

\begin{proof} 
We use notation as in the proof of Lemma \ref{lem:belowpos} and Remark \ref{r:consts}.
Consider the energy
\begin{align*} 
E&=\int_{S}\|du-B\|^{2}=\int_{S} u^{\ast}\omega - u^*dH_{z}\wedge\beta\\
&\le \int_{S} u^{\ast}\omega - u^*dH_{z}\wedge\beta -\theta(u(z))\\
&= E_+ - \sum_{i=1}^{hm+k} E_{i,-},
\end{align*}
where $E_+$ is the action at the positive puncture and $E_{i,-}$, $i=1,\dots,hm+k$ are the actions at the negative punctures.
In particular, the action $E_{i,-}$ at any of the negative punctures satisfies
\begin{equation}\label{eq:negative-estimate}
E_{i,-}\le E_+.
\end{equation}
Also, because each of the actions $E_{i,-}$ is positive, we have 
$$
E\le E_+.
$$

Consider now the contribution to the energy from the strip or cylinder region $V$. Fix $\eta>0$ and note that, in the strip case, the measure of the set of points $s\in[0,L]$ such that 
\[ 
\int_{\{s\}\times[0,1]}  \left\|\partial_t u- \epsilon X_{H}\right\|^{2}\,dt\ge \eta
\]
is bounded by $E/\eta$ (similarly for the integral over $\{s\}\times S^1$ in the cylinder case). 
In particular if $L>E/\eta$ we have that there are slices $\gamma'=\{s_0\}\times[0,1]$ in the strip case or $\gamma'=\{s_0\}\times S^{1}$ in the cylinder case for which $\left\|\partial_t u(s_0,\cdot)- \epsilon X_{H}\right\|_{L^2}^{2}\le \eta$, which implies 
$$
\|\partial_t u(s_0,\cdot)\|_{L^2}\le \sqrt \eta + \epsilon F.
$$
We obtain for the action of $\gamma'$ the estimate
\begin{eqnarray*}
\left|\int_{\gamma'} \alpha - \epsilon H dt\right| & \le & C\|\p_t u(s_0,\cdot)\|_{L^1} + \epsilon M \\
& \le & C\sqrt \eta + \epsilon (CF + M).
\end{eqnarray*}
Applying Stokes' theorem to the energy integral of the part $S'$ of $S$ containing $\gamma'$ and the negative puncture $q$ then shows as in~\eqref{eq:negative-estimate} that the energy of the chord or orbit at $q$ is $<\epsilon_0$, provided $\eta=E/L$ and $\epsilon$ are sufficiently small. 
\end{proof}

\section{Properties of spaces of Floer solutions}\label{S:mdlispaces}
Let $(X,L)$ be a Weinstein pair as before. 
Consider a $\mathcal{E}$-family $B_{\tau}$, $\tau\in[0,1]$ of interpolation splitting compatible 1-forms over $\mathcal{D}$ with values in Hamiltonian vector fields, see Section \ref{sec:hamilt1form}, and a field of domain dependent almost complex structures, see Section \ref{sec:acs}, where the Hamiltonians satisfy the non-degeneracy condition at infinity. Here we think of $(X,L)=(X_1,L_1)$ constructed from a cobordism if $\tau\in[0,1)$, and if $\tau=1$ we also allow standard Weinstein pairs.

This data allows us to study the Floer equation
\begin{equation}\label{eq:Floerspec} 
(du-B_{\tau})^{0,1}=0,
\end{equation}
for $u\colon (D_{h;hm,k},\partial D_{h;hm,k})\to (X,L)$. We will refer to solutions of \eqref{eq:Floerspec} as \emph{Floer holomorphic curves}.

\subsection{Transversality and dimension}\label{sec:Floermodulispaces}
In order to express the dimensions of moduli spaces of Floer holomorphic curves, we use Conley-Zehnder indices for chords and orbits (with conventions as in \cite[Appendix~A.1]{CEL}). They are defined as follows. If $\gamma$ is a Hamiltonian orbit then fix a disk $D_{\gamma}$ (recall that we assume $\pi_1(X)=1$) that bounds $\gamma$ and a trivialization of the tangent bundle $TX$ over $D_\gamma$. The Conley-Zehnder index $\CZ(\gamma)\in\mathbb{Z}$ of a Hamiltonian orbit is then defined using the path of linear symplectic matrices that arises as the linearization of the Hamiltonian flow along $\gamma$ in this trivialization, see~\cite{RS}. Then $\CZ(\gamma)$ is independent of the choice of trivialization since $c_{1}(X)=0$. 

If $c$ is a Hamiltonian chord we pick a capping disk $D_{c}$ mapping the unit disk into $X$ as follows. Pick a base point in each component of the Lagrangian $L$. Fix paths connecting base points in different components and along these paths fix paths of Lagrangian tangent planes connecting the tangent planes of the Lagrangian $L$ at the base points. (We use the constant path with the constant tangent plane at the base point connecting the base point in a given component to itself.) In the disk $D_c$ we map the boundary arc $\pa D_{c}^{-}$ between $-1$ and $1$ to the Hamiltonian chord, and we map the boundary arc $\pa D_{c}^{+}$ between $1$ and $-1$ as follows: the boundary arc between $1$ and $e^{\frac{\pi i}{4}}$ is mapped to the component of $L$ that contains the Hamiltonian chord endpoint, and connects the latter to the base point; the arc between $e^{\frac{\pi i}{4}}$ and $e^{\frac{3\pi i}{4}}$ follows the path between base points; finally, the arc between $e^{\frac{3\pi i}{4}}$ and $-1$ is mapped to the connected component of $L$ that contains the Hamiltonian chord start point, and connects the base point to the Hamiltonian chord start point. This then gives the following loop $\Gamma_{c}$ of Lagrangian planes: along $\pa D_{c}^{+}$ we follow first the tangent planes of $L$ starting at the endpoint of the chord and ending at the base point, then the planes along the path connecting base points, then again planes tangent to $L$ from the base point to the start point of the chord; along $\pa D_{c}^{-}$ we transport the tangent plane of the Lagrangian at the chord start point by the linearization of the Hamiltonian flow along the chord, and finally we close up by a rotation along the complex angle in the positive direction connecting the transported Lagrangian plane to the tangent plane at the endpoint of the chord. We define
\[
\CZ(c)=\mu(\Gamma_c),
\]
where $\mu$ denotes the Maslov index of $\Gamma_c$ read in a trivialization of $TX$ over $\pa D_{c}$ that extends over $D_{c}$. This is then well-defined since $c_1(X)=0$ and since the Maslov class of $L$ vanishes. 
\begin{remark}
The Conley-Zehnder index $\CZ(c)$ of a Reeb chord $c$ with both endpoints in one component of the Lagrangian submanifold is independent of all choices. For chords with endpoints in distinct components $\CZ$ is independent up to an over all shift that depends on the choice of tangent planes along the path connecting base points.  
\end{remark} 

We also define positive and negative capping operators. For chords $c$ these operators $o_\pm(c)$ are defined using capping disks. This capping operator is a linearized Floer-operator on a once boundary-punctured disk, with Lagrangian boundary condition given by the tangent planes along the capping path oriented from the endpoint of the chord to the start point for the positive capping operator $o_+(c)$ and with the reverse path for the negative capping operator $o_-(c)$. We assume (as is true for generic data) that the image of the Lagrangian tangent plane at the start point of the chord under the linearized flow is transverse to the tangent plane at the endpoint. For orbits, the capping operators $o_{\pm}(\gamma)$ are operators on punctured spheres with positive or negative puncture with asymptotic behavior determined by the linearized Hamiltonian flow along the orbit $\gamma$. More precisely, the capping operators are then $\bar\partial$-operators perturbed by a 0-order term acting on the Sobolev space of vector fields $v$ on the punctured sphere $S$ or disk $D$ that in the latter case are tangent to the Lagrangian along $\partial D$ with one derivative in $L^{p}$, $p>2$.

We find that the chord capping operators are Fredholm and their index is given by the formula~\cite{MS04,EES}
\[
\ind(o_+(c))= n+(\CZ(c)-n)=\CZ(c),\quad \ind(o_-(c))=n-\CZ(c).
\]
The orbit capping operators have index~\cite{Abouzaid-cotangent,CEL}
\[
\ind(o_{+}(\gamma))=n+\CZ(\gamma),\quad \ind(o_{-}(\gamma))=n-\CZ(\gamma).
\]

Let $a$ be a Hamiltonian chord, $\gamma$ a Hamiltonian orbit, $\mathbf{b}=b_1\dots b_m$ a word of Hamiltonian chords and $\bo{\eta}=\eta_1\dots\eta_k$ a word of Hamiltonian orbits. Let $\sigma\in\mathcal{E}$ be a splitting compatible constant section over $\mathcal{D}$, which takes values in the simplex $\Delta^{hm+k-1}$ over the interior of $\mathcal{D}_{h;hm,k}$. 

When the number of boundary components of the source curve is $h=1$, we consider the moduli space $\mathcal{F}^{\sigma}_{\tau}(a;\mathbf{b},\bo{\eta})$ of solutions 
\[
u\colon (D_{1;m,k},\partial D_{1;m,k})\to(X,L), \qquad D_{1;m,k}\in\mathcal{D}_{1;m,k}
\]
of the Floer equation 
\[ 
\left(du - B^{\sigma}_{\tau}\right)^{0,1}=0.
\]  
Here $B^{\sigma}_{\tau}$ is the 1-form with values in Hamiltonian vector fields determined by $\sigma\in\mathcal{E}$. The map $u$ converges at the positive puncture to $a$, and at the negative punctures to $b_1,\dots,b_m,\eta_1,\dots,\eta_k$. The interior negative punctures are endowed with asymptotic markers induced from the positive boundary puncture as in Section~\ref{sec:cylends}. We write
\[ 
\mathcal{F}_{\tau}(a;\mathbf{b},\bo{\eta})=\bigcup_{\sigma\in\Delta^{hm+k-1}}\mathcal{F}^{\sigma}_{\tau}(a;\mathbf{b},\bo{\eta}).
\]
(Recall that the family $B^{\sigma}$ depends smoothly on $\sigma$.) We also write
\begin{align*} 
\mathcal{F}_{\R}(a;\mathbf{b},\bo{\eta}) &=\bigcup_{\tau\in(0,1)}\mathcal{F}_{\tau}(a;\mathbf{b},\bo{\eta}),\\
\mathcal{F}_{\R}^{\sigma}(a;\mathbf{b},\bo{\eta}) &=\bigcup_{\tau\in(0,1)}\mathcal{F}_{\tau}^{\sigma}(a;\mathbf{b},\bo{\eta}).
\end{align*}

When the number of boundary components of the source curve is $h=0$, we similarly consider the moduli space $\mathcal{F}^{\sigma}_{\tau}(\gamma;\bo{\eta})$ of solutions  
\[
u\colon D_{0;0,k}\to X, \qquad D_{0;0,k}\in \mathcal{D}_{0;0,k}
\]
of the Floer equation 
\[ 
\left(du - B^{\sigma}_{\tau}\right)^{0,1}=0,
\]   
converging at the positive puncture to $\gamma$, and at the negative punctures to $\eta_1,\dots,\eta_k$. Here the positive puncture has a varying asymptotic marker, which induces asymptotic markers at all the negative punctures as described in Section~\ref{sec:cylends}. We write
\[ 
\mathcal{F}_{\tau}(\gamma;\bo{\eta})=\bigcup_{\sigma\in{\Delta}^{k-1}}\mathcal{F}^{\sigma}_{\tau}(\gamma;\bo{\eta})
\]
and 
\begin{align*} 
\mathcal{F}_{\R}(\gamma;\bo{\eta})&=\bigcup_{\tau\in(0,1)}\mathcal{F}_{\tau}(\gamma;\bo{\eta}),\\
\mathcal{F}_{\R}^{\sigma}(\gamma;\bo{\eta})&=\bigcup_{\tau\in(0,1)}\mathcal{F}_{\tau}^{\sigma}(\gamma;\bo{\eta}).
\end{align*}

\begin{remark}
Floer equations corresponding to one step Hamiltonians are a special case of the above, corresponding to $\tau=1$. We sometimes use a simpler notation for such spaces: we drop the $\tau=1$ subscript and write $\mathcal{F}^{\sigma}=\mathcal{F}_{1}^{\sigma}$ and $\mathcal{F}=\mathcal{F}_{1}$.
\end{remark}

\begin{theorem}\label{thm:dim+tv}
For generic families of almost complex structures and Hamiltonians, the moduli spaces $\mathcal{F}_{\tau}(\gamma,\bo{\eta})$, $\mathcal{F}_{\tau}(a;\mathbf{b},\bo{\eta})$,   
$\mathcal{F}_{\R}(\gamma,\bo{\eta})$, $\mathcal{F}_{\R}(a;\mathbf{b},\bo{\eta})$
are manifolds of dimension
\begin{align*}
\dim \mathcal{F}_{\tau}(\gamma;\bo{\eta})&=\dim \mathcal{F}_{\R}(\gamma;\bo{\eta})-1\\
&=(\CZ(\gamma)+(n-3))-\sum_{j=1}^{k}(\CZ(\eta_j)+(n-3))-1,
\end{align*}
and
\begin{align*}
\dim \mathcal{F}_{\tau}(a;\mathbf{b},\bo{\eta})&=\dim \mathcal{F}_{\R}(a;\mathbf{b},\bo{\eta})-1\\
&=(\CZ(a)-2)-\sum_{j=1}^{m}(\CZ(b_j)-2)\\
&-\sum_{j=1}^{k}(\CZ(\eta_j)+(n-3))-1,
\end{align*}
respectively.

For generic fixed $\sigma\in\Delta^{hm+k-1}$ the corresponding moduli spaces $\mathcal{F}^{\sigma}(\gamma,\bo{\eta})$ and $\mathcal{F}^{\sigma}(a;\mathbf{b},\bo{\eta})$ are manifolds of dimension
\begin{align*}
\dim \mathcal{F}_{\tau}^{\sigma}(\gamma;\bo{\eta})&=\dim \mathcal{F}_{\R}^{\sigma}(\gamma;\bo{\eta})-1\\
&=(\CZ(\gamma)+(n-2))-\sum_{j=1}^{k}(\CZ(\eta_j)+(n-2))-1,
\end{align*}
and
\begin{align*}
\dim \mathcal{F}^{\sigma}_{\tau}(a;\mathbf{b},\bo{\eta})&=\dim \mathcal{F}_{\R}^{\sigma}(a;\mathbf{b},\bo{\eta})-1\\
&=(\CZ(a)-1)-\sum_{j=1}^{m}(\CZ(b_j)-1)\\
&-\sum_{j=1}^{k}(\CZ(\eta_j)+(n-2))-1,
\end{align*}
respectively.

Furthermore, for generic data, the projection of the moduli spaces $\mathcal{F}_{\R}$ and $\mathcal{F}_{\R}^{\sigma}$ to the line $\R$ (interpolating between the Hamiltonians) is a Morse function with distinct critical values. 
\end{theorem}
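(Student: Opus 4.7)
The proof has three ingredients: a Fredholm index computation via linear gluing with the capping operators from Section \ref{sec:Floermodulispaces}, a standard parametric Sard-Smale transversality argument, and a one-parameter Morse argument. At any solution $u$, the linearization of the Floer equation is a Cauchy-Riemann operator $L\bar\pa_F$ on $u^{\ast}TX$ with tangential Lagrangian condition along $u^{-1}(L)$ (when $h=1$) and asymptotic behavior at each cylindrical end dictated by the linearized Hamiltonian flow along the limit chord or orbit. Non-degeneracy of these asymptotics, which follows from the time-dependent perturbation near Reeb orbits together with the slope condition $a\ne \ell$, makes $L\bar\pa_F$ Fredholm on weighted Sobolev spaces. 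Gluing with $o_+$ at the positive puncture and with $o_-$ at every negative puncture converts $L\bar\pa_F$ into an elliptic operator on a closed sphere ($h=0$) or on a closed disk with Lagrangian boundary ($h=1$), whose Riemann-Roch index is $2n$ or $n$ respectively, using $c_1(X)=0$ and the vanishing Maslov class of $L$. Subtracting the capping indices $\CZ(a)$, $n-\CZ(b_j)$, and $n\pm\CZ(\gamma)$ recorded at the start of Section \ref{sec:Floermodulispaces} yields $\ind(L\bar\pa_F)$; adding the real dimension of the underlying Deligne-Mumford space $\mathcal{D}_{h;hm,k}$ for variation of the conformal structure and subtracting one for the residual $\R$-translation at the positive cylindrical end gives the stated formulas for $\dim \mathcal{F}_\tau^\sigma$. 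The formulas for $\mathcal{F}_\tau$, $\mathcal{F}_\R$, and $\mathcal{F}_\R^\sigma$ incorporate the additional $hm+k-1$ parameters from $\sigma\in\Delta^{hm+k-1}$ and/or the one parameter $\tau\in(0,1)$.

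Transversality is then achieved by a standard Sard-Smale argument applied to the projection from the universal moduli space, parameterized by the admissible data (splitting-compatible $J_z$, time-dependent perturbation of $H$, the section $\sigma$, and the parameter $\tau$), onto the parameter space. Surjectivity of the universal linearization at an injective point of $u$ is the main analytic input, and follows from the freedom to perturb $J_z$ in the interior of the domain away from both the cylindrical ends (where $J_z$ depends only on $t$) and the neck regions (where splitting compatibility forces specific behavior). The main anticipated obstacle is that splitting compatibility constrains the admissible data near $\pa\mathcal{D}_{h;hm,k}$; this is circumvented because the constraints are localized near breaking strata, leaving perturbations supported in the interior of the domain and away from neck regions admissible, and these suffice to produce surjectivity by the usual somewhere-injectivity arguments of Floer-Hofer-Salamon and Dragnev.

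Finally, the Morse property of the projections $\mathcal{F}_\R\to\R$ and $\mathcal{F}_\R^\sigma\to\R$ is a one-parameter transversality statement: a critical point at $(u,\tau_0)$ corresponds to a solution of the $\tau_0$-Floer equation at which $\coker L\bar\pa_F$ is one-dimensional, spanned modulo lower-order terms by the derivative $\pa_\tau B_\tau|_{\tau_0}$ pulled back along $u$. Generic choice of data ensures first that these critical points are non-degenerate, i.e.\ Morse, and then, via a further generic perturbation supported away from cylindrical ends and neck regions, that distinct critical points acquire distinct critical values.
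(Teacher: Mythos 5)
Your proposal follows the same overall route as the paper (Fredholm index via linear gluing with capping operators, then Sard--Smale transversality, then a one-parameter Morse argument), and your treatment of transversality and of the Morse statement is reasonable. However, the index computation itself is wrong in two places, and these errors do not cancel.

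First, the gluing direction is backwards. To cap the punctured domain into a closed disk or sphere, the cap attached at the positive puncture must itself have a \emph{negative} puncture, so one glues $o_-(a)$ there (index $n-\CZ(a)$), and at each negative boundary or interior puncture one glues $o_+(b_j)$ (index $\CZ(b_j)$) or $o_+(\eta_j)$ (index $n+\CZ(\eta_j)$). You glue $o_+$ at the positive puncture and $o_-$ at the negatives and accordingly use the capping indices $\CZ(a)$, $n-\CZ(b_j)$, $n\pm\CZ(\eta_j)$; these are the indices of the operators you name, so this is not merely a notational swap, and the resulting formula
\[
n=\ind(L\bar\pa_F)+\CZ(a)+\sum_j\bigl(n-\CZ(b_j)\bigr)+\sum_j\bigl(n\mp\CZ(\eta_j)\bigr)
\]
is incorrect; the correct relation is
\[
n=\ind(a;\mathbf{b},\bo{\eta})+\bigl(n-\CZ(a)\bigr)+\sum_j\CZ(b_j)+\sum_j\bigl(\CZ(\eta_j)+n\bigr).
\]

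Second, the extra ``$-1$ for the residual $\R$-translation at the positive cylindrical end'' is spurious. The dimension of $\mathcal{F}^{\sigma}_{\tau}$ is $\ind(L\bar\pa_F)+\dim\mathcal{D}_{h;hm,k}$ with no further subtraction; the automorphism group (for $hm+k=1$ this is the $\R$-translation of the strip or cylinder) is already encoded in $\dim\mathcal{D}_{1;1,0}=-1$, resp.\ $\dim\mathcal{D}_{0;0,1}=-1$. For the other moduli spaces one then adds $hm+k-1$ for the simplex and/or $1$ for $\tau\in(0,1)$, as you say. The two errors together leave your answer off by a quantity depending on $n$, $\CZ(a)$, $\sum\CZ(b_j)$, $\sum\CZ(\eta_j)$, and $m$. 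Already in the simplest case $(m,k)=(1,0)$ your computation gives $\CZ(b)-\CZ(a)-2$, whereas the statement asserts $\CZ(a)-\CZ(b)-1$.
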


\begin{proof}
To see this we first note that the operator we study is Fredholm. The expected dimension of the moduli space is then given by the sum of the index of the operator acting on a fixed surface and the dimension of auxiliary parameter spaces (i.e.~the space of conformal structures on the domain and the space which parameterizes the choice of 1-forms). 

Consider first the case when $h=1$. We denote the index of the operator on the fixed surface $\ind(a;\mathbf{b},\bo{\eta})$. To compute it, we glue on capping operators at all punctures. Additivity of the index under gluing at a non-degenerate chord or orbit together with the Riemann-Roch formula then gives:
\[
n=\ind(a;\mathbf{b},\bo{\eta})+n-\CZ(a) + \sum_{j=1}^{m}\CZ(b_j)+\sum_{j=1}^{k}(\CZ(\eta_j)+n).
\]
The dimension is then obtained by adding the dimension of the space of conformal structures and that of the space of 1-forms:
\[
\dim \mathcal{F}(a;\mathbf{b},\bo{\eta})= \ind(a;\mathbf{b},\bo{\eta})+(m-2)+2k+(m+k-1).
\] 
When the form $B^{\sigma}$ is fixed, we simply subtract the dimension of the simplex, $(m+k-1)$. The calculation in the case $h=m=0$ is similar and gives
\[
\dim \mathcal{F}(\gamma;\bo{\eta})= (\CZ(\gamma)+n)-\sum_{j=1}^{k}(\CZ(\eta_j)+n)+2k-3+(k-1),
\]
where $2k-3$ is the dimension of the space of conformal structures on the sphere with $k+1$ punctures where there is a varying asymptotic marker at one of the punctures. In the case where the form $B^{\sigma}$ is fixed we subtract the dimension of the simplex, $k-1$.

Finally, to see that these are manifolds, we need to establish surjectivity of the linearized operator for generic data. This is well-known in the current setup and follows from the unique continuation property of pseudo-holomorphic curves in combination with an application of the Sard-Smale theorem. The key points are that $J$ (and $H$) are allowed to depend on all parameters and that $(X,L)$ is exact so that no bubbling of pseudo-holomorphic spheres or disks occurs, see e.g. \cite[Appendix]{Bourgeois-pik} and~\cite[Section 9.2]{MS04}. 

The last statement is a straightforward consequence of the Sard-Smale theorem. 
\end{proof}

We next show that there are no solutions of the Floer equation with only high-energy asymptotes if the 0-order term corresponds to a constant section of $\mathcal{E}$ that lies sufficiently close the boundary of the simplex.

\begin{lemma}\label{l:nohigh}
For any $E>0$ there exists $\epsilon>0$ such that if $\sigma$ is a constant section of $\mathcal{E}$ that lies in an $\epsilon$ neighborhood of the boundary of the simplex and if $\mathfrak{a}(a)<E$ and $\mathfrak{a}(\gamma)<E$, then for any non-trivial words $\mathbf{b}$ and $\bo{\eta}$ of high-energy chords and orbits, respectively, and for any $\tau\in[0,1]$, the moduli spaces $\mathcal{F}^{\sigma}_{\tau}(a;\mathbf{b},\bo{\eta})$ and $\mathcal{F}^{\sigma}_{\tau}(\gamma;\bo{\eta})$ are empty. 
\end{lemma}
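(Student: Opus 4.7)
\textbf{Proof plan for Lemma~\ref{l:nohigh}.} The strategy is to use the non-positivity of $B_\tau$ to bound the action of the positive puncture from below by the sum of actions at negative punctures, to then bound the number of high-energy negative asymptotes, and finally to combine the long cylindrical/strip region provided by the weight condition $(\mathbf{II}_\ell)$ with the action bound of Lemma~\ref{l:lowenergy} to force at least one negative asymptote to be low-energy, contradicting the hypothesis.

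First, I would fix $E>0$ and let $\epsilon_0>0$ be the minimal action of a high-energy Hamiltonian chord or orbit, so any high-energy asymptote has action $\ge\epsilon_0$. By the action inequalities recalled right before Lemma~\ref{lemma:actiondecomposition} (a consequence of non-positivity of $B_\tau$ and Stokes' theorem), any solution in $\mathcal{F}^{\sigma}_\tau(a;\mathbf{b},\bo\eta)$ or $\mathcal{F}^{\sigma}_\tau(\gamma;\bo\eta)$ with positive asymptote of action $<E$ satisfies
\[
\sum_{j}\mathfrak{a}(b_j)+\sum_j \mathfrak{a}(\eta_j)<E.
\]
If every negative asymptote is high-energy, this forces $hm+k\le E/\epsilon_0$. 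In particular, the number of negative punctures is bounded by a constant $N_0=N_0(E)$ depending only on $E$. Since the section $\sigma$ lives over $\mathcal{D}_{h;hm,k}$ and near the boundary of this space curves have at most $hm+k$ levels, the relevant level count $\ell$ is also bounded by $\ell_0:=N_0$.

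Next, since $\sigma=(w_1,\dots,w_{hm+k})\in\Delta^{hm+k-1}$ lies in an $\epsilon$-neighborhood of the boundary of the simplex, at least one weight satisfies $w_j<\epsilon$. Applying condition $(\mathbf{II}_\ell)$ from Lemma~\ref{lemma:ell-levelsupharm} to the $j^{\text{th}}$ negative puncture, the cylindrical end near that puncture contains a strip or cylinder region on which
\[
B_\tau = X_{H_t^\psi}\otimes \varepsilon'\,dt,\qquad 0<\varepsilon'\le 2\,w_j^{1/\ell}\le 2\,\epsilon^{1/\ell_0},
\]
and whose length is at least $\kappa(w_j^{1/\ell})\ge \kappa(\epsilon^{1/\ell_0})$, where $\kappa$ is the stretching profile satisfying $\kappa(s)\to\infty$ as $s\to 0^+$. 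On this region the Hamiltonian is the time-dependent perturbation $H_t$ (or its two-step analogue) but is independent of~$s$, so Lemma~\ref{l:lowenergy} (in the mild extension that allows the bounded $t$-dependent perturbation built into $H_t$; the argument there depends only on the uniform bounds $C,M,F$ from Remark~\ref{r:consts}, which hold uniformly in $\tau\in[0,1]$ and in the choice of one- or two-step Hamiltonian since $u$ stays in $\{r\le 1\}$ by Lemma~\ref{lem:belowpos}) produces constants $L=L(E)$ and $\epsilon_{\text{LE}}=\epsilon_{\text{LE}}(E)$ such that any long enough region of slope at most $\epsilon_{\text{LE}}$ separating a negative puncture from the positive one forces that puncture to be a low-energy asymptote.

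I would finally choose $\epsilon$ so small that both $\kappa(\epsilon^{1/\ell_0})\ge L$ and $2\,\epsilon^{1/\ell_0}\le \epsilon_{\text{LE}}$; this is possible because $\kappa(s)\to\infty$ as $s\to 0^+$. Then the $j^{\text{th}}$ negative asymptote of any $u\in\mathcal{F}^{\sigma}_\tau(a;\mathbf{b},\bo\eta)$ or $\mathcal{F}^{\sigma}_\tau(\gamma;\bo\eta)$ must be low-energy, contradicting the standing assumption that $\mathbf{b}$ and $\bo\eta$ consist only of high-energy chords and orbits. Hence these moduli spaces are empty.

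\textbf{Main obstacle.} The delicate point is to obtain a single $\epsilon$ that works uniformly in $\tau\in[0,1]$ and uniformly across the (a priori infinite) collection of tuples $(h,m,k)$. This is resolved by the two inputs above: the bound $hm+k\le E/\epsilon_0$ controls the number of levels that can appear and thus the exponent $1/\ell$ in $(\mathbf{II}_\ell)$, while the action-energy argument in Lemma~\ref{l:lowenergy} depends only on data that stays uniformly bounded throughout the $\tau$-interpolation (since the maximum principle of Lemma~\ref{lem:belowpos} keeps every Floer solution inside the common compact region $\{r\le 1\}$). A subsidiary technical point that would have to be checked is that the minor time-dependence of $H_t$ in the cylindrical ends does not invalidate the energy argument of Lemma~\ref{l:lowenergy}; this follows by repeating that proof verbatim, observing that $\int_{\gamma'}\alpha-\varepsilon' H_t\,dt$ is estimated in exactly the same way, with $M$ and $F$ replaced by their uniform $t$-independent bounds on the compact set $\{r\le 1\}$.
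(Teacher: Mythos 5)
Your argument is correct and follows precisely the route the paper intends; the paper's own proof of Lemma~\ref{l:nohigh} is the one-line observation that it is ``an immediate consequence of the $\ell$-level condition on the $1$-form $\beta$ and Lemma~\ref{l:lowenergy},'' and your proposal is simply the detailed unwinding of that sentence. You supply exactly the inputs the paper is appealing to: the action inequality bounding $hm+k$ by $E/\epsilon_0$ (so that the exponent $1/\ell$ appearing in $(\mathbf{II}_\ell)$ is uniformly controlled), the existence of a long strip or cylinder region of small slope near the $j$-th negative puncture once some weight $w_j<\epsilon$, and the invocation of Lemma~\ref{l:lowenergy} with constants uniform in $\tau$ via the maximum principle of Lemma~\ref{lem:belowpos}. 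Your remark about the mild $t$-dependence of $H_t$ and about checking that the estimates of Lemma~\ref{l:lowenergy} survive this (and the $s$-dependence introduced by $\phi_{T(\tau)}\circ g$ for the interpolation form $B_\tau$, which only affects the argument through the uniform bounds $M$, $F$, $C$) is a genuine technical point that the paper glosses over; you handle it appropriately.
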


\begin{proof}
This is an immediate consequence of the $\ell$-level condition on the non-negative 1-form $\beta$ and Lemma~\ref{l:lowenergy}.
\end{proof}

\subsection{Compactness and gluing} \label{sec:compactness-gluing}
For simpler notation, we write $\mathcal{F}_{\tau}$, $\mathcal{F}^{\sigma}_{\tau}$, $\mathcal{F}_{\R}$, and $\mathcal{F}^{\sigma}_{\R}$ with unspecified punctures as common notation for either type of moduli space (corresponding to either $h=0$ or $h=1$) in Theorem \ref{thm:dim+tv}. We also write $\mathcal{F}^{+}_{\tau}$ and $\mathcal{F}^{+}_{\R}$ for components of $\mathcal{F}_{\tau}$ and $\mathcal{F}_{\R}$ where all asymptotic chords and orbits are of high-energy. Recall that, if $B^{\sigma}$ is a splitting compatible field of 1-forms determined by a constant splitting compatible section $\sigma$ of $\mathcal{E}$ then, over a several level curve, $B^{\sigma}$ determines 1-forms depending on constant sections over its pieces. 

\begin{theorem} \label{thm:mdli}
The spaces $\mathcal{F}^{\sigma}_{\tau}$ and $\mathcal{F}^{+}_{\tau}$ admit compactifications as manifolds with boundary with corners, where the boundary corresponds to several level curves in $\mathcal{F}^{\sigma}_{\tau}$ and $\mathcal{F}^{+}_{\tau}$ respectively, joined at Hamiltonian chords or orbits. 
\end{theorem}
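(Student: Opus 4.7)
The plan is to combine the a priori bounds established in the preceding section with a standard Gromov--Floer compactness/gluing argument adapted to our $\mathcal{E}$-parametrized setting, where the extra care is required to match the simplex parameter with the several level structure via the component restriction maps of Section~\ref{sec:simplexbundle}.

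First I would collect the a priori bounds. The $C^{0}$-bound $u(S)\subset\{r\le 1\}$ comes directly from Lemma~\ref{lem:belowpos} (the maximum principle), which applies because $B^\sigma_\tau$ is built from linear-at-infinity Hamiltonians and non-positive $1$-forms satisfying the non-degeneracy condition at Reeb periods. Since $(X,L)$ is exact and the $1$-forms $B^\sigma_\tau$ have non-positive differential, the usual action computation in the proof of Lemma~\ref{l:lowenergy} gives $E(u)\le \mathfrak{a}(a)-\sum\mathfrak{a}(b_i)-\sum\mathfrak{a}(\eta_j)$, bounded in terms of the asymptotic data. Exactness also rules out sphere and disk bubbling, so that combined with elliptic bootstrapping we get uniform $C^{\infty}_{\text{loc}}$-bounds away from the punctures, and the standard cylinder-with-small-energy lemma near the punctures.

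Next I would apply Gromov--Floer compactness in the form of \cite{BEHWZ}. Any sequence in $\mathcal{F}^{\sigma}_{\tau}$ (resp.\ $\mathcal{F}^{+}_{\tau}$) has a subsequence converging to a several-level Floer curve whose domain lies in the compactification of $\mathcal{D}_{h;hm,k}$ described in Section~\ref{sec:cylends}, with each level a Floer curve for the restricted $0$-order datum. Here the splitting compatibility of the $B^\sigma_\tau$ family is essential: on each component of the limit building the induced $0$-order term is precisely $B^{r_j(\sigma)}_{\tau}$, where $r_j$ is the component restriction map. Thus the limit building is naturally an element of a fiber product of spaces of the same type, matched at Hamiltonian chords/orbits. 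For $\mathcal{F}^{+}_{\tau}$ the action bound of Section~\ref{sec:action_bound} together with Lemma~\ref{l:nohigh} show that as the constant section approaches the simplex boundary we must in fact break, and that all breakings involve only Hamiltonian chords or orbits (no escape to infinity and no limit of high-energy configurations over the simplex boundary): a possible component asymptotic to a low-energy chord/orbit is allowed only on internal levels, precisely matching the expected boundary structure of the compactification.

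The final and most delicate step is gluing. For each several level configuration I would invoke the linear gluing of coherent capping operators (Section~2.5 of the paper) and the standard nonlinear implicit function theorem, using the coherent cylindrical ends from Section~\ref{sec:cylends} and the interpolation of $1$-forms from Lemmas~\ref{lem:1-supharm} and~\ref{lemma:ell-levelsupharm}. The point is that a neighborhood of a several level limit is parametrized, via~\eqref{eq:bdrycoord}, by a product of open neighborhoods in lower moduli spaces and gluing rays $(\rho_{0;l},\infty)$ for each gluing edge; because the function $g_e$ and hence the $1$-form $B^\sigma_\tau$ were constructed to be smooth in these gluing coordinates, the glued pre-solutions form a smooth family, and transversality (Theorem~\ref{thm:dim+tv}) allows a true solution to be produced uniquely near each gluing parameter. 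This is the main obstacle: one must verify that the resulting charts fit together as a manifold with corners, which amounts to checking that different orders of breaking of non-adjacent gluing edges commute up to smooth reparametrization. The check is local at each stratum of $\mathcal{D}_{h;hm,k}$ and reduces, via the splitting compatibility and the product structure of~\eqref{eq:bdrycoord}, to the iterated gluing theorem for Floer cylinders/strips at transverse non-degenerate chords/orbits, which is by now standard. Combining compactness with this gluing description gives the claimed manifold-with-corners compactification, whose codimension-$j$ stratum consists of $(j+1)$-level buildings.
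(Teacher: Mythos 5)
Your overall structure --- $C^0$ bound from Lemma~\ref{lem:belowpos}, energy bound from non-positivity of $B^\sigma_\tau$ and exactness of $(X,L)$, Gromov--Floer compactness, splitting compatibility to identify the $0$-order term on each limit level as $B^{r_j(\sigma)}_\tau$, and Floer gluing organized via the chart structure of~\eqref{eq:bdrycoord} --- is exactly the route the paper takes, though the paper is considerably terser, deferring compactness and gluing to standard references and flagging only the two points where the setup departs from the textbook case: gluing compatibility of $B^\sigma$, and Lemma~\ref{l:nohigh} for $\mathcal{F}^+_\tau$.

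The treatment of $\mathcal{F}^+_\tau$ is, however, off on both counts. Lemma~\ref{l:nohigh} does not say that ``as the constant section approaches the simplex boundary we must in fact break''; it says that once $\sigma$ enters a fixed neighborhood of $\partial\Delta^{hm+k-1}$ there are \emph{no} solutions with all high-energy asymptotics. Consequently $\mathcal{F}^+_\tau$ projects to a compact subset of the open simplex and no boundary stratum at all arises from the simplex direction --- there is no ``breaking triggered by small weights,'' just absence of solutions. Your further claim that ``a possible component asymptotic to a low-energy chord/orbit is allowed only on internal levels'' is the opposite of what is asserted and what is needed: the boundary of the compactified $\mathcal{F}^+_\tau$ consists of several-level buildings joined at \emph{high-energy} chords and orbits, so that every level again lies in $\mathcal{F}^+_\tau$; this is precisely what makes the differential of $\mathcal{SC}^+$ square to zero without ever seeing the low-energy subcomplex. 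The mechanism is the action estimate of Section~\ref{sec:action_bound}: non-positivity gives $\mathfrak{a}(p)\ge \sum_i \mathfrak{a}(q_i)$ for the positive and negative punctures of each level, the free negative asymptotics all have action at least $\epsilon_0>0$, while low-energy generators have action $\approx 0$ (indeed slightly negative); an induction up the tree of the building then forces every breaking generator to have action at least $\epsilon_0$, hence to be high-energy.
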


\begin{proof}
The fact that any sequence of curves in $\mathcal{F}^{\sigma}_{\tau}$ has a subsequence that converges to a several level curve is a well-known form of Gromov-Floer compactness for $(X,L)$ exact. In order to find a neighborhood of the several level curves in the boundary of the moduli space we use Floer gluing. That the Floer equation is compatible with degeneration in the moduli space of curves is a consequence of the gluing compatibility condition for the family of 1-forms $B^{\sigma}$, $\sigma\in\mathcal{E}$. Both compactness and gluing are treated in~\cite[Chapters~4 and~10]{MS04} and in~\cite[Chapter~9]{Seidel-book}, see also e.g.~\cite[Appendix A]{ESm} for a treatment of family gluing. 

For $\mathcal{F}^{+}_{\tau}$, by Lemma~\ref{l:nohigh} note that there are no solutions near the boundary of the simplex so the only possible boundary are broken curves joined at high-energy chords or orbits.
\end{proof}

We next consider compactifications of moduli spaces $\mathcal{F}_{\R}^{+}$ which consist of solutions of the Floer equation with the interpolation form $B_{\tau}$ as $\tau$ varies over $(0,1)$. Similar results hold for moduli spaces $\mathcal{F}_{\R}^{\sigma}$, but we focus on the high-energy case since that is all we use later and since we then need not involve any low-energy chords and orbits.
  
\begin{theorem} \label{thm:mdli2}
The moduli spaces $\mathcal{F}_{\R}^{+}$ admit compactifications as manifolds with boundary with corners, where the boundary corresponds to several level curves joined at Hamiltonian chords and orbits of the following form:
\begin{itemize}
\item Exactly one level $S$ (possibly of several components) lies in $\mathcal{F}_{\R}^{+}$.
\item At the positive punctures of $S$ are attached several level curves in $\mathcal{F}_1^{+}$ in $X_1$ that solve the Floer equation
\[ 
(du-B_1)^{0,1}=0.
\]
\item At the negative punctures of $S$ are attached several level curves in $\mathcal{F}_0^{+}$ in $X_1$
corresponding to the Floer equation
\[ 
(du - B_0)^{0,1}=0.
\]
In fact, the curves in $\mathcal{F}_{0}^{+}$ in $X_1$ can be canonically identified with the curves in $\mathcal{F}^{+}$ in $X_0$ that solve the Floer equation with $B_0$ constructed from the Hamiltonian that equals $H_0$ on $X_0$ that continues to grow linearly over the end of $X_0$.
\end{itemize}
\end{theorem}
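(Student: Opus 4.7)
The plan is to combine the compactness and gluing techniques of Theorem~\ref{thm:mdli} with a refined analysis of how the $\tau$-parameter interacts with the level structure of the Deligne-Mumford compactification. First I would establish the uniform energy bound along any sequence $(u_\nu,\tau_\nu) \in \mathcal{F}_{\R}^+$: since $B_{\tau}$ is non-positive for every $\tau$, the usual Stokes argument gives
\[
E(u_\nu) \le \mathfrak{a}(\gamma^+) - \sum_i \mathfrak{a}(\gamma_i^-),
\]
and the action values are determined by the asymptotes, which lie in the finite set of high-energy chords and orbits fixed by the problem. Combined with Lemma~\ref{lem:belowpos} (the maximum principle), this yields $C^\infty_{\rm loc}$ precompactness of the sequence on every compact subset of the domain and a Gromov-Floer limit in the usual SFT-style broken-curve sense, just as for Theorem~\ref{thm:mdli}. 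Since the simplex parameter $\sigma$ is also bounded away from the boundary of the simplex on each piece of the limit by Lemma~\ref{l:nohigh} (all asymptotes being high energy), no degeneration from boundary-of-simplex behavior occurs.

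Next I would analyze what equation each level of the limit satisfies. Let the limit be a several-level curve with components $S_j$ and gluing constants $c_\nu^{j}$ between the restriction of $g_{e_\nu}$ on an exhausting compact $K_\nu^j \subset S_\nu$ and the intrinsic function $g_{r_j(e)}$ on $S_j$, in the sense of Lemma~\ref{lemma:ell-levelsupharm}. The definition~\eqref{eq:interpol1form} together with this splitting compatibility identifies the pulled-back 1-form on $K_\nu^j$, up to an error vanishing in the limit, with the interpolation form
\[
\bigl(1-\phi_{T(\tau_\nu)-c_\nu^{j}}\circ g_{r_j(e)}\bigr)B_1 + \bigl(\phi_{T(\tau_\nu)-c_\nu^{j}}\circ g_{r_j(e)}\bigr)B_0,
\]
i.e.\ an interpolation form with effective parameter $T_j^\nu := T(\tau_\nu) - c_\nu^{j}$. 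Passing to a subsequence, each $T_j^\nu$ tends to a value in $[-\infty,+\infty]$: if $T_j^\infty = -\infty$ then $\phi_{T_j^\nu}\circ g_{r_j(e)} \to 0$ uniformly on compacts and $S_j$ carries the pure equation $(du-B_1)^{0,1}=0$; if $T_j^\infty=+\infty$ the level carries $(du-B_0)^{0,1}=0$; and if $T_j^\infty \in \R$ the level is a solution in $\mathcal{F}_\R^+$. Since the $c_\nu^{j}$ coming from the inductive construction in Lemma~\ref{lemma:ell-levelsupharm} are monotone in the partial order of the underlying tree (each downward step contributes a positive constant), the subset of levels with $T_j^\infty \in \R$ is order-convex, and for generic limits a dimension count (one interpolation parameter) forces it to consist of exactly one level, with $\mathcal{F}_1^+$ levels strictly above and $\mathcal{F}_0^+$ levels strictly below. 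This monotonicity step is the main conceptual obstacle, and making it rigorous requires keeping careful track of how $c_\nu^j$ behaves under the gluing of the nested neighborhoods $\mathcal{N}^\ell$.

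For the identification of $\mathcal{F}_0^+$ curves in $X_1$ with $\mathcal{F}^+$ curves in $X_0$, I would invoke Lemma~\ref{lemma:actiondecomposition}: a $B_0$-holomorphic curve in $X_1$ with only high-energy asymptotes has all asymptotes in $C_{X_0}$ because all other positive-action chords and orbits of $H_0$ are excluded (those in $C_{X_1}^-$, $C_{X_0}^-$, and $O_{X_{01}}$ have negative action, which is incompatible with a positive-action positive asymptote via the action inequalities from Section~\ref{sec:action_bound}). Since $H_0$ is monotone in $r$ on the negative-end portion of $X_{01}$ and all asymptotes sit near $\{-R\} \times Y_0$, another application of the maximum principle of Lemma~\ref{lem:belowpos}, now applied to the $r$-coordinate in the $X_1$-end, confines the image to the subset corresponding to the compact part of $X_0$ together with its original positive end; on this subset $H_0$ agrees with a standard one-step Hamiltonian on $X_0$, providing the claimed canonical identification.

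Finally, I would invoke the standard Floer gluing technology of \cite{MS04} and \cite{Seidel-book}, in its parametric version (cf.\ \cite{ESm}), to realize each such several-level configuration as a genuine boundary point and to equip the compactification with the structure of a manifold with corners. The key technical inputs are the surjectivity results obtained in Theorem~\ref{thm:dim+tv} and the fact that gluing of the $B_\tau$-problem is compatible with the interpolation parameter via exactly the shift $T \mapsto T - c_\nu^j$ identified above, so that the pregluing of an $\mathcal{F}_\R^+$ level with $\mathcal{F}_1^+$ levels above and $\mathcal{F}_0^+$ levels below produces a $B_\tau$-solution on the glued domain for $\tau$ determined up to small error by the gluing parameters.
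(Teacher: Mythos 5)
Your proof is correct and takes essentially the same route as the paper: Gromov--Floer compactness plus the maximum principle (Lemma~\ref{lem:belowpos}), splitting compatibility of the $1$-forms to identify the limit equation on each level, and Floer gluing, with the action/maximum-principle argument for the final identification of $\mathcal{F}_0^+$ curves in $X_1$ with curves in $X_0$. The paper compresses the first three steps to ``a repetition of the proof of Theorem~\ref{thm:mdli}''; you spell out the part that is genuinely specific to the interpolation setting, namely that the gluing constants $c_\nu^{j}$ from Lemma~\ref{lemma:ell-levelsupharm} shift the effective interpolation parameter $T(\tau_\nu)$ level by level, and that these shifts are monotone in the tree order and diverge as the gluing parameters do, so that at most one level can retain a finite effective parameter. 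That observation, together with the index count, is precisely what justifies the three bullet points of the statement, and your explicit use of Lemma~\ref{lemma:actiondecomposition} to exclude asymptotes other than those in $C_{X_0}$ is a legitimate unpacking of the paper's reference to ``notation as in Lemma~\ref{lemma:actiondecomposition}''. The one place you could tighten the wording is the phrase ``for generic limits a dimension count ... forces it to consist of exactly one level'': the upper bound of one interpolation level already follows from the divergence of the $c_\nu^{j}$, with no genericity needed; the dimension count is only needed to match the codimension of the boundary stratum with the breaking.
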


\begin{proof}
The proof is a repetition of the proof of Theorem \ref{thm:mdli}, except for the last statement. The last statement follows from Lemma \ref{lem:belowpos} which shows that a curve with positive puncture at a chord or orbit in $C_{X_0}$ (notation as in Lemma \ref{lemma:actiondecomposition}) lies inside $\{r\le 1\}$, where $r=e^t$ is the coordinate in the symplectization end of $X_0$.
\end{proof}

\section{Definition of the Hamiltonian simplex DGA}\label{S:opendga}
In this section we define the Hamiltonian simplex DGA. In order to simplify grading and dimension questions we assume that $\pi_1(X)=0$, $c_1(X)=0$ and that the Maslov class $\mu_L$ of the Lagrangian submanifold $L$ vanishes, see Section~\ref{sec:examples} for a discussion of the general case.

\subsection{DGA for fixed Hamiltonian}\label{s:fixHamDGA}
Let $H$ be a one step time dependent Hamiltonian, see Section \ref{sec:hamilt1form}, and let $B$ be an $\mathcal{E}$-family of 1-forms associated to $H$ and fix a family of almost complex structures.
 
Define the algebra $\mathcal{SC}^+(X,L;H)$ to be the algebra generated by high-energy Hamiltonian chords $c$ of $H$, graded by 
$$
|c|=\CZ(c)-2,
$$ 
and by high-energy $1$-periodic orbits $\gamma$ of $H$ graded by 
$$
|\gamma|=\CZ(\gamma)+(n-3).
$$ 
We impose the condition that orbits sign commute with chords and that orbits sign commute with orbits. See also Remark~\ref{rmk:sign_commute}.

Define the map
\[ 
\delta\colon \mathcal{SC}^+(X,L;H)\to\mathcal{SC}^+(X,L;H),\qquad 
\delta=\delta_1+\delta_2+\dots+\delta_m+\dots,
\]
to satisfy the graded Leibniz rule and as follows on generators. For a Hamiltonian chord $a$:
\[ 
\delta_r(a)=\sum_{|a|-|\mathbf{b}|-|\bo{\eta}|=1}\frac{1}{k!}\left| \mathcal{F}(a;\mathbf{b},\bo{\eta})\right| \bo{\eta}\mathbf{b},
\]
where the sum ranges over all words $\mathbf{b}=b_1\dots b_m$ and $\bo{\eta}=\eta_1\dots\eta_k$ which satisfy the grading condition and are such that $m+k=r$. Here $\left|\mathcal{F}\right| $ denotes the algebraic number of elements in the oriented $0$-dimensional manifold $\mathcal{F}$. Similarly, for a Hamiltonian orbit $\gamma$:
\[ 
\delta_r(\gamma)=\sum_{|\gamma|-|\bo{\eta}|=1}\frac{1}{r!}\left| \mathcal{F}(\gamma;\bo{\eta})\right| \bo{\eta},
\]
where the sum ranges over all words $\bo{\eta}=\eta_1\dots\eta_r$ which satisfy the grading condition.  

\begin{lma} \label{lma:secondarydelta2}
The map $\delta\colon\mathcal{SC}^{+}(X,L;H)\to\mathcal{SC}^{+}(X,L;H)$ has degree $-1$ and is a differential, i.e.~$\delta\circ\delta=0$.
\end{lma}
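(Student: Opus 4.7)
The degree claim is immediate from Theorem \ref{thm:dim+tv}: with the conventions $|c|=\CZ(c)-2$ and $|\gamma|=\CZ(\gamma)+n-3$, the moduli spaces appearing in $\delta_r$ are zero-dimensional precisely when $|a|-|\mathbf{b}|-|\bo{\eta}|=1$ (respectively $|\gamma|-|\bo{\eta}|=1$), so each $\delta_r$ lowers degree by one. Since $\delta$ is extended to all of $\mathcal{SC}^+(X,L;H)$ by the graded Leibniz rule, $\delta^2$ is an odd derivation of degree $-2$, and it suffices to verify $\delta^2=0$ on generators.

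The plan for $\delta^2=0$ is the standard boundary-of-one-dimensional-moduli-space argument, the one new ingredient being the simplex parameter. Fix a high-energy chord generator $a$ (the orbit generator case is analogous with spheres replacing disks) and a word $\bo{\eta}\mathbf{b}$ with $|a|-|\mathbf{b}|-|\bo{\eta}|=2$, so that $\mathcal{F}^+(a;\mathbf{b},\bo{\eta})$ is a $1$-dimensional moduli space parameterized by the simplex $\Delta^{m+k-1}$. By Theorem \ref{thm:mdli}, its compactification is a manifold with boundary whose boundary strata consist of $2$-level broken configurations joined at a Hamiltonian chord or orbit. Lemma \ref{l:nohigh} rules out contributions arising from approaching $\partial\Delta^{m+k-1}$, since all asymptotes are high-energy; therefore the entire boundary is accounted for by pairs $(u_1,u_2)$ with $u_1\in\mathcal{F}^+(a;\ldots,c,\ldots)$ and $u_2\in\mathcal{F}^+(c;\ldots)$ joined at some high-energy chord or orbit $c$. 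The signed boundary count of a compact oriented $1$-manifold vanishes, and this will be the source of the identity $\delta^2=0$.

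It remains to identify this boundary count with the coefficient of $\bo{\eta}\mathbf{b}$ in $\delta(\delta(a))$ as expanded by the Leibniz rule. The crucial geometric input is the component restriction map from Section \ref{sec:simplexbundle}: over each $2$-level stratum it identifies the simplex fiber with a product of the simplex fibers on the two pieces, so the boundary stratum of the parameterized moduli space factors as a product of $0$-dimensional simplex-parameterized moduli spaces of exactly the type counted by the individual maps $\delta_{r_i}$. The factorial prefactors $\tfrac{1}{k!}$ in the definition combine with the binomial combinatorics of splitting a word of $k$ unordered interior orbits between the top and bottom piece, reproducing term-by-term the Leibniz expansion of $\delta\circ\delta$ on $a$.

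The main obstacle is the sign verification: the boundary orientation of the $1$-dimensional moduli space must match the Koszul signs prescribed by the graded Leibniz rule. This is controlled by the coherent orientation scheme from the final subsection of Section~\ref{sec:simplex_bundles}, built from capping operators as in \cite{EES_ori} for chords and \cite{EGH} for orbits, together with the simplex orientation convention under which the basis direction $\pa_j$ stabilizes the $j$-th negative capping operator and $\pa_0$ stabilizes the positive one. A linear gluing computation at the breaking chord or orbit identifies the glued orientation (including the simplex stabilization direction) with the product of the piece orientations up to the Koszul sign expected from the Leibniz rule. Combining the vanishing of the signed boundary count with this sign identification yields $\delta^2 a=0$ on each generator $a$, and the same argument applied to the $1$-dimensional spaces $\mathcal{F}^+(\gamma;\bo{\eta})$ yields $\delta^2\gamma=0$ on each generator orbit $\gamma$, completing the proof.
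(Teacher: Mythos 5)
Your proposal is correct and follows essentially the same route as the paper's (much terser) proof, which simply cites Theorem~\ref{thm:mdli} and asserts the bijective sign-preserving correspondence between terms of $\delta\circ\delta$ and boundary components of compactified $1$-dimensional moduli spaces; you have fleshed out the implicit details (degree count from Theorem~\ref{thm:dim+tv}, the role of Lemma~\ref{l:nohigh} in excluding simplex-boundary contributions, the component restriction maps, the factorial combinatorics, and the orientation matching via the simplex stabilization convention). One small slip: since $\delta$ is an odd derivation of degree $-1$, its square $\delta^2$ is an \emph{even} derivation (of even degree $-2$), not an odd one; the conclusion that it suffices to check $\delta^2=0$ on generators is of course still valid.
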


\begin{proof}
This is a consequence of Theorem \ref{thm:mdli}: the terms in $\delta\circ \delta$ are in bijective sign preserving correspondence with the boundary components of the (oriented) 1-dimensional compactified moduli spaces $\mathcal{F}$. 
\end{proof}

\begin{remark}
Repeating the above constructions replacing the moduli spaces $\mathcal{F}$ with $\mathcal{F}^{\sigma}$ for some generic constant splitting compatible section of $\mathcal{E}$, we get a differential on the DGA $\mathcal{SC}^{+}(X,L;H)$ with grading shifted up by $1$, denoted $\mathcal{SC}^{+}(X,L;H)[-1]$.  
\end{remark}

\subsection{Cobordism maps for fixed Hamiltonians}\label{s:fixHamcob}
Consider a symplectic cobordism $(X_{10},L_{10})$ and fix a two step Hamiltonian $H_0$ and a one step Hamiltonian $H_1$. As in Theorem \ref{thm:mdli2} we think of $H_0$ also as a Hamiltonian on $X_0$ only (basically removing the second step making it a one step Hamiltonian).

Define the map 
\begin{equation}\label{eq:preldefcobmap} 
\Phi\colon \mathcal{SC}^{+}(X_1,L_1;H_1)\to\mathcal{SC}^{+}(X_0,L_0;H_0), \quad \Phi=\Phi_1+\Phi_2+\dots
\end{equation}
as the algebra map given by the following count on generators. 
\begin{itemize}
\item For chords $a$:
\[ 
\Phi_r(a)=\sum_{|a|-|\mathbf{b}|-|\bo{\eta}|=0} \frac{1}{k!}\left| \mathcal{F}_{\R}(a;\mathbf{b},\bo{\eta})\right| \bo{\eta}\mathbf{b},
\]
where the sum ranges over all $\mathbf{b}=b_1\dots b_m$ and $\bo{\eta}=\eta_1\dots\eta_k$ with $m+k=r$.
\item For orbits $\gamma$:
\[ 
\Phi_{r}(\gamma)=\sum_{|\gamma|-|\bo{\eta}|=0}\frac{1}{r!}\left| \mathcal{F}_{\R}(\gamma;\bo{\eta})\right| \bo{\eta}.
\]
\end{itemize}

\begin{theorem}\label{thm:chainmap}
The map $\Phi\colon \mathcal{SC}^{+}(X_1,L_1;H_1)\to\mathcal{SC}^{+}(X_0,L_0;H_0)$ is a chain map, i.e.~$\delta\Phi=\Phi \delta$.
\end{theorem}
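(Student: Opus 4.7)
Since $\Phi$ is defined as an algebra map and $\delta$ acts as a graded derivation on both $\mathcal{SC}^{+}(X_1,L_1;H_1)$ and $\mathcal{SC}^{+}(X_0,L_0;H_0)$, the identity $\delta\Phi = \Phi\delta$ extends from generators to arbitrary words. I will therefore verify it on a single generator $a$ (chord or orbit), by interpreting both sides as an oriented count of codimension-one strata in the compactification of a suitable $1$-dimensional moduli space of interpolation solutions.

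The plan is to fix an output pair $(\mathbf{b},\bo{\eta})$ with $|a| - |\mathbf{b}| - |\bo{\eta}| = 0$ so that, by Theorem~\ref{thm:dim+tv}, $\mathcal{F}_{\R}^{+}(a;\mathbf{b},\bo{\eta})$ is (generically) a $1$-manifold, and then apply Theorem~\ref{thm:mdli2} to identify the boundary of its compactification. Codimension-one breakings come in exactly two families. Type A: a top-level $\mathcal{F}_1^{+}$ configuration with positive asymptote $a$ and negative asymptotes a word $(\mathbf{c},\bo{\zeta})$, with an $\mathcal{F}_{\R}^{+}$-curve attached at each of its negative punctures so that the concatenation of all asymptotes of these lower curves yields $(\mathbf{b},\bo{\eta})$. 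Type B: a top-level $\mathcal{F}_{\R}^{+}$-curve with positive asymptote $a$ and some negative asymptotes, exactly one of which has an $\mathcal{F}_0^{+} \cong \mathcal{F}^{+}(X_0)$-curve (in the sense of the last item of Theorem~\ref{thm:mdli2}) glued on, the remaining asymptotes assembling to $(\mathbf{b},\bo{\eta})$. No codimension-one breakings occur in the interior of the simplex near its boundary, by Lemma~\ref{l:nohigh}, so these two families exhaust the boundary.

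Summing the signed count of boundary points of this oriented $1$-manifold over all $(\mathbf{b},\bo{\eta})$ with $|a|-|\mathbf{b}|-|\bo{\eta}|=0$ then gives the identity
\[
0 = \sum_{\text{Type A}} (\pm 1) + \sum_{\text{Type B}} (\pm 1).
\]
Grouping Type A contributions according to the top $\mathcal{F}_1^{+}$-curve assembles, after summing over negative-asymptote data, precisely $\Phi(\delta_{1} a) = \Phi\delta(a)$, where the factorials $\tfrac{1}{k!}$ combine correctly because orbits sign-commute with everything in both DGAs, so reordering orbits among the $\mathcal{F}_{\R}$ factors and within the concatenated output word produces no new sign and exactly the expected multinomial coefficients. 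Grouping Type B contributions according to which negative asymptote of the top $\mathcal{F}_{\R}^{+}$-curve breaks off, and using that $\delta$ is a graded derivation on a DGA where orbits sign-commute with chords and orbits, reassembles $\delta_{0}(\Phi a) = \delta\Phi(a)$.

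The main obstacle is verifying that the signs of the two types of boundary contributions are exactly opposite, so that the above vanishing reads $\Phi\delta(a) - \delta\Phi(a) = 0$ rather than some other linear combination. This is a standard coherent-orientations argument, using the capping-operator conventions from the end of Section~\ref{sec:simplex_bundles} together with the sign bookkeeping for the simplex parameter $\sigma\in\Delta^{hm+k-1}$ and the interpolation parameter $\tau\in(0,1)$, whose orientation is the outward normal to the broken stratum. The details are analogous to the corresponding sign computations in Lemma~\ref{lma:secondarydelta2} (for $\delta^2 = 0$) and those reviewed in Appendix~\ref{A:1}, and essentially reduce to the Leibniz rule and the fact that, by the last statement of Theorem~\ref{thm:dim+tv}, one may assume generically that only one $\tau$-value contributes a breaking at each boundary point.
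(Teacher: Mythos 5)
Your proposal follows exactly the paper's own strategy: the paper's proof is a single sentence, "By Theorems \ref{thm:dim+tv} and \ref{thm:mdli2}, contributions to $\Phi\delta-\delta\Phi$ correspond to the boundary of an oriented 1-dimensional moduli space," and your Type~A/Type~B decomposition of the boundary of the compactified interpolation moduli space $\mathcal{F}_{\R}^{+}(a;\mathbf{b},\bo{\eta})$ is precisely how one fills that sentence in.

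One small numerical slip: you write that the degree condition $|a|-|\mathbf{b}|-|\bo{\eta}|=0$ makes $\mathcal{F}_{\R}^{+}(a;\mathbf{b},\bo{\eta})$ one-dimensional, but by Theorem~\ref{thm:dim+tv} one has $\dim\mathcal{F}_{\R}(a;\mathbf{b},\bo{\eta})=|a|-|\mathbf{b}|-|\bo{\eta}|$, so the one-dimensional spaces are those with $|a|-|\mathbf{b}|-|\bo{\eta}|=1$; this also matches the fact that both $\Phi\delta$ and $\delta\Phi$ lower degree by one. The condition $|a|-|\mathbf{b}|-|\bo{\eta}|=0$ is what appears in the \emph{definition} of $\Phi$, where it gives the zero-dimensional spaces that are actually counted. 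Correcting this index does not alter the structure of your argument.
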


\begin{proof}
By Theorems \ref{thm:dim+tv} and \ref{thm:mdli2},
contributions to $\Phi \delta-\delta\Phi$ correspond to the boundary of an oriented 1-dimensional moduli space.   
\end{proof}

\subsection{The Hamiltonian simplex DGA}
In order for the DGA introduced above to capture all aspects of the Reeb dynamics at the boundary of the Weinstein pair $(X,L)$ we need to successively increase the slope of the Hamiltonian. Consider a family of one level Hamiltonians $H_a$ where $H_{a_1}>H_{a_0}$ if $a_1>a_0$ and $H_a(r,y)=ar+b$ in the cylindrical end $[0,\infty)\times Y$. Inserting a trivial cobordism, we change $H_{a_1}$ to a two step Hamiltonian $H_{a_1}'$ with the slope $a_1$ at the end of the trivial cobordism as well. Then $H_{a_1}'>H_{a_0}$. We define the \emph{Hamiltonian simplex DGA}
$$
\mathcal{SC}^{+}(X,L)=\underrightarrow{\lim}_{\,a\to\infty}\,\mathcal{SC}^{+}(X,L;H_{a}),  
$$
where the direct limit is taken with respect to the directed system given by the cobordism maps
\[ 
\Phi\colon \mathcal{SC}^{+}(X,L;H_{a_0})\to \mathcal{SC}^{+}(X,L;H_{a_1}')= \mathcal{SC}^{+}(X,L;H_{a_1}),
\]
see Sections~\ref{sec:compactness-gluing} and~\ref{s:hmtpyofcobmaps} for the last equality. Its homology is 
$$
\mathcal{SH}^{+}(X,L)=\underrightarrow{\lim}_{\,a\to\infty}\, H(\mathcal{SC}^{+}(X,L;H_a)).
$$

\begin{remark}
One can alternatively define the Hamiltonian simplex DGA $\mathcal{SC}^+(X,L)$ as the homotopy limit of the directed system $\{\mathcal{SC}^+(X,L;H_{a})\}$, obtained by the algebraic mapping telescope construction as in~\cite[\S3g]{Abouzaid-Seidel} (see also~\cite[Chapter~3, p.~312]{Hatcher}).
\end{remark}

\subsection{Homotopies of cobordism maps}\label{s:hmtpyofcobmaps}
In this subsection we study invariance properties of the cobordism maps defined in Section~\ref{s:fixHamcob}. As a consequence we find that the homotopy type of the Hamiltonian simplex DGA is independent of Hamiltonian, 0-order perturbation term, and field of almost complex structure and depends only on the underlying Weinstein pair $(X,L)$.

Let $(X_{10},L_{10})$ be a cobordism of pairs and consider a 1-parameter deformation of the data used to define the cobordism map parameterized by $s\in I$. We denote the corresponding cobordism maps
\[ 
\Phi_{s}\colon \mathcal{SC}^{+}(X_1,L_1)\to \mathcal{SC}^{+}(X_0,L_0), \quad s\in I.
\]

Here we take the deformation of the data to be supported in the middle region of the cobordism. In other words the symplectic form, the field of almost complex structures, and the Hamiltonians and associated 0-order terms in the Floer equation vary in the compact cobordism but are left unchanged near $[-R,0]\times Y_{0}$ and outside $\{0\}\times Y_1$, see Figure \ref{fig:H-cob}. 

For fixed $s\in I$ we get an interpolation form $B_{\tau}^{s}$, $\tau\in I$ and moduli spaces $\mathcal{F}_{\R}^{s}$, as in Section~\ref{sec:compactness-gluing}. Exactly as there, we suppress from the notation the punctures, and also the constant section $\sigma$ on which $B_\tau^s$ depends. We write the corresponding parameterized moduli spaces as
\[ 
\mathcal{F}_{\R}^{I}=\bigcup_{s\in I}\mathcal{F}_{\R}^{s}.
\]

We will show below that the chain maps $\Phi_{0}$ and $\Phi_{1}$ are chain homotopic. The proof is however rather involved. To explain why we start with a general discussion pointing out the main obstruction to a simple proof.
The chain maps $\Phi_{0}$ and $\Phi_{1}$ are defined by counting $(-1)$-disks in $\R$-families of Floer equations, or in other words rigid $0$-dimensional curves in $\mathcal{F}^{0}_{\R}$ and $\mathcal{F}^{1}_{\R}$, respectively. A standard transversality argument shows that for generic $1$-parameter families $s\in I$, the $0$-dimensional components of the moduli spaces $\mathcal{F}^{I}_{\R}$ constitute a transversely cut out oriented 0-manifold. From the point of view of parameterized Floer equations this 0-manifold consists of isolated $(-2)$-disks, where one parameter is $\tau\in I$ and the other is $s\in I$.   

\begin{remark}\label{rmk:-2curve}
In our notation below we always include the simplex parameters in the dimension counts but view both the interpolation parameter $\tau\in I$ and the deformation parameter $s\in I$ as extra parameters. With this convention we call a curve of formal dimension $d$ a \emph{$(d)$-curve}.
\end{remark}

In analogy with the definition of the chain maps induced by cobordisms, counting $(-2)$-curves during a generic deformation of cobordism data should give a chain homotopy between the chain maps $\Phi_{0}$ and $\Phi_{1}$ at the ends of the deformation interval $I$. However, counting $(-2)$-curves is not entirely straightforward in the present setup because of the following transversality problem: since the curves considered may have several negative punctures mapping to the same Hamiltonian chord or orbit, an isolated $(-2)$-curve can be glued to the negative ends of a $(d)$-curve, $d>0$ a number of $(d+1)$ times, resulting in a several level curve of formal dimension
\[ 
d+(d+1)(-2+1)=-1,
\]
and could thus contribute to the space of $(-1)$-curves used to define the cobordism chain maps. In order to find a chain homotopy the $(-2)$ curve should appear only once in combination with the $(0)$-curve that gives the differentials. 

To resolve this problem, we restrict attention to a small time interval around the critical $(-2)$-curve moment and ``time-order'' the negative ends of the curves in the moduli space of Floer holomorphic curves in the positive end. Similar arguments are used in e.g.~\cite{Ekholm_rsft, Abouzaid-Seidel}. In these constructions there are differences between interior and boundary punctures. In case the positive puncture of the $(-2)$-disk is a chord (boundary puncture) the time ordering argument is simpler since there is a natural order of the boundary punctures in the disks where the $(-2)$-disk can be attached, and that ordering can be used in building the perturbation scheme. In the orbit case (interior puncture) there is no natural ordering and we are forced to add a homotopy of homotopies argument on top of the ordering perturbation. We sketch these constructions below but point out that actual details do depend on the existence of a suitable perturbation scheme that will not be discussed here, see Remark~\ref{rmk:perturbations}.      

We now turn to the proof that $\Phi_0$ and $\Phi_1$ are chain homotopic. Consider first the case in which there are no $(-2)$-curve instances in the interval $[0,1]$. Then the 1-dimensional component of $\mathcal{F}^{I}_{\R}$ gives an oriented cobordism between the 0-dimensional moduli spaces used to define the cobordism maps and hence $\Phi_0=\Phi_1$. A general deformation can be perturbed slightly into general position and then it contains only a finite number of transverse $(-2)$-curve instances. By subdividing the family it is then sufficient to show that $\Phi_0$ and $\Phi_1$ are homotopic for deformation intervals that contain exactly one such transverse $(-2)$-curve. The following result expresses the effect of a $(-2)$-disk algebraically. The proof is rather involved and occupies the rest of this section.

\begin{lemma}\label{lem:chainhomotopy_orbit}
Assume that the deformation interval contains exactly one $(-2)$-curve. Then
the DGA maps $\Phi_{0}$ and $\Phi_{1}$ are chain homotopic, i.e.~there exists a degree $+1$ map $K\colon \mathcal{SC}^{+}(X_1,L_1)\to \mathcal{SC}^{+}(X_0,L_0)$, such that
\begin{equation}\label{eq:chhomtpy_orbit}
\Phi_{1}=\Phi_{0} e^{(K\circ d_{1}-d_{0}\circ K)}, 
\end{equation}
where $d_{1}$ and $d_0$ are the differentials on $\mathcal{SC}^{+}(X_1,L_1)$ and $\mathcal{SC}^{+}(X_0,L_0)$, respectively.
\end{lemma}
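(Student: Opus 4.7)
The plan is to define $K$ by counting the isolated $(-2)$-curves that occur at the single critical deformation parameter $s_\ast\in(0,1)$, and then to derive \eqref{eq:chhomtpy_orbit} by analyzing the boundary of the compactified one-dimensional component of $\mathcal{F}_{\R}^{I}$ in a suitably perturbed setting. Concretely, at $s=s_\ast$ finitely many isolated rigid Floer solutions $u_\ast\in\mathcal{F}_{\R}^{s_\ast}$ appear, each with a positive asymptotic chord/orbit and a word of negative asymptotic chords/orbits; I would let $K$ be the algebra derivation sending each generator $a$ (resp.\ $\gamma$) to the signed, weighted sum $\sum \frac{1}{k!}|u_\ast|\,\bo{\eta}\mathbf{b}$ over such rigid curves with fixed positive asymptote $a$ (resp.\ $\gamma$), extended by the graded Leibniz rule. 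The degree shift works out because passing from a $(-1)$-curve to a $(-2)$-curve raises the formal dimension of the algebraic contribution by one.

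The main obstruction is precisely the one flagged in the text: a single rigid $(-2)$-curve $u_\ast$ can be glued at any subset of the negative punctures of a generic $(d)$-curve, and in particular can be glued $j$ times into the same $(d=2j-2)$-dimensional family to produce a $(-1)$-curve that a priori should contribute to the ordinary cobordism counts defining $\Phi_s$. To handle this, for $s$ in a small interval $[s_\ast-\epsilon,s_\ast+\epsilon]$ I would introduce a domain-dependent perturbation that time-orders the negative punctures at which $u_\ast$ may be attached: near each negative end one inserts a small cylindrical/strip perturbation depending on a real parameter, so that gluings of $u_\ast$ into distinct negative ends of the same upper curve occur at distinct parameter values. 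In the chord (boundary puncture) case the natural cyclic ordering of boundary punctures of a disk provides the ordering automatically; in the orbit (interior puncture) case there is no intrinsic order, so I would add a further homotopy-of-homotopies argument, averaging over a choice of ordering and using the $S^{1}$-symmetry to see that the result is independent of that choice. This mirrors the time-ordering schemes of~\cite{Ekholm_rsft,Abouzaid-Seidel} referenced in the excerpt, and its combinatorial effect is exactly to count multi-gluings of $u_\ast$ with factorial weights.

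Having set up the perturbation, I would analyze the compactified one-dimensional component of $\mathcal{F}_{\R}^{I}$ over $[s_\ast-\epsilon,s_\ast+\epsilon]$. By Theorems \ref{thm:dim+tv} and \ref{thm:mdli2}, its boundary decomposes into: (i) components at $s=s_\ast\pm\epsilon$, contributing $\Phi_{s_\ast+\epsilon}-\Phi_{s_\ast-\epsilon}$; (ii) several-level breakings at Hamiltonian chords or orbits in the interpolation region, with one level in $\mathcal{F}_{\R}$ and the rest in $\mathcal{F}_1^{+}$ on top or $\mathcal{F}_0^{+}$ on the bottom, which contribute $\Phi_s\circ d_1 - d_0\circ \Phi_s$ and cancel in pairs as in Theorem \ref{thm:chainmap}; and (iii) configurations in which a rigid $(-2)$-curve $u_\ast$ is glued into one or several negative punctures of a rigid $(0)$-cobordism curve (or, symmetrically, into the positive puncture of a $(0)$-curve composed with $\Phi_{s_\ast}$), which contribute terms of the form $\Phi_{s_\ast}\circ K\circ d_1 - d_0\circ \Phi_{s_\ast}\circ K$ together with higher-order iterated-gluing corrections coming from the time-ordered multi-gluing regions.

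Outside the interval $[s_\ast-\epsilon,s_\ast+\epsilon]$ the one-parameter family is transverse with no isolated $(-2)$-curves, so the corresponding cobordism gives $\Phi_0=\Phi_{s_\ast-\epsilon}$ and $\Phi_1=\Phi_{s_\ast+\epsilon}$ up to DGA homotopy. Summing the contributions in (i)--(iii) and organizing the iterated multi-gluings of $u_\ast$ with the $\frac{1}{k!}$ combinatorial weights produced by the time-ordering perturbation yields the exponential formula
\[
\Phi_{1}=\Phi_{0}\, e^{K\circ d_{1}-d_{0}\circ K},
\]
in which the $n$-th term of the exponential series counts configurations involving $n$ copies of $u_\ast$ attached along a $(0)$-curve; the identities $d_0^2=d_1^2=0$ and the Leibniz property of $K$ ensure that the bracketed expression $K\circ d_1-d_0\circ K$ is the correct derivation whose exponential acts on the tensor algebra. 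The hardest step is justifying rigorously the combinatorics of (iii) within the chosen abstract perturbation scheme, i.e.\ checking that after time-ordering the signed count of $n$-fold gluings of $u_\ast$ into a single upper curve really produces the symmetric factor $\frac{1}{n!}$ demanded by the exponential; once this is established, the lemma follows.
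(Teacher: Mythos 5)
Your overall architecture (define $K$ from $(-2)$-curves, introduce a time-ordering perturbation near the critical moment, analyze the compactified one-dimensional moduli space, derive an exponential) matches the paper's strategy, but there are two genuine gaps in the execution.

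First, your definition of $K$ only counts the original isolated $(-2)$-curves at $s_\ast$. The paper's $K_\varepsilon$ counts $(-2)$-disks \emph{after} the ordering perturbation $\varepsilon$ has been turned on, and this is essential: as noted explicitly in the paper, the perturbation starts from a degenerate situation where all negative punctures lie at the same time, so switching it on unavoidably creates new $(-2)$-disks in addition to the one at $s_\ast$. These newly created $(-2)$-disks are not artifacts to be cancelled but a genuine part of the count, and without them the basic relation $\Phi_1(\gamma) - \Phi_0(\gamma) = \Omega^{\varepsilon}_{K_\varepsilon}(d_1^\varepsilon\gamma) + d_0\Omega^{\varepsilon}_{K_\varepsilon}(\gamma)$ of Lemma~\ref{lem:chainhomotopy_chord} fails. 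Your formulation of the boundary analysis in items (i)--(iii) implicitly assumes $K$ is just the count at $s_\ast$, which is not the object the exponential formula is built from.

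Second, the appeal to ``$S^1$-symmetry'' to dispose of the ordering ambiguity at interior punctures is too weak; this is exactly the point that requires the homotopy-of-homotopies argument encapsulated in Lemmas~\ref{l:nocross} and~\ref{l:Kchange}. The paper interpolates between two orderings $\varepsilon$ and $\tau$ via a disk $D$ of paths, shows that (after a deformation of $D$) the locus $\Gamma$ of $(-2)$-disks is embedded with boundary at $(-3)$-disk splittings, and from this derives an algebraic relation $K_\varepsilon - K_\tau = \Omega^{\varepsilon'}_{K_{\varepsilon\tau}}(d_1^{\varepsilon'}\cdot) + d_0\Omega^{\varepsilon'}_{K_{\varepsilon\tau}}(\cdot)$. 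Feeding this into the basic relation and using $d_1^{\varepsilon}d_1^{\varepsilon}=0$ shows that the chain homotopy expression is independent of the ordering; only then can one average over orderings and identify the result with the $r$-level tree expansion of $\Phi_0\, e^{Kd_1-d_0K}$. Relatedly, your attribution of the $1/n!$ to ``symmetric multi-gluings of a single $u_\ast$'' is misplaced: in the paper the combinatorics of the exponential arises from the inductive $r$-level tree structure produced by repeatedly moving $\Phi_0$-factors across, matched against the exponential after averaging over orderings, not from symmetric factors in glued configurations.
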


\begin{rmk}
The exponential in \eqref{eq:chhomtpy_orbit} is the usual power series of operators.
\end{rmk}

\begin{rmk}
For the chord algebra $\mathcal{SC}^{+}(L)$,
Lemma~\ref{lem:chainhomotopy_orbit} follows from an extended version of~\cite[Lemma B.15]{Ekholm_rsft} (that takes orientations of the moduli spaces into account) which is stated in somewhat different terminology. In the proof below, we will adapt the terminology used there to the current setup so as to include (parameterized) orbits as well. Here, it should be mentioned that~\cite[Lemma B.15]{Ekholm_rsft}, and consequently also the current result, depend on a perturbation scheme for so-called M-polyfolds (the most basic level of polyfolds), the details of which are not yet worked out, and hence it should be viewed as a proof strategy rather than a proof in the strict sense.
\end{rmk}

We prove Lemma \ref{lem:chainhomotopy_orbit} in two steps. In the first step we relate $\Phi_0$ and $\Phi_1$ using an abstract perturbation that time orders the negative punctures in all moduli spaces of curves with punctures at chords and orbits in $C_{X_1}$. In the case that there are only chords there is a natural order of the negative punctures given by the boundary orientation of the disk and in that case the relation between $\Phi_0$ and $\Phi_1$ derived using the natural ordering perturbation can be turned into an algebraic relation. In the case that there are also orbits there is no natural ordering and to derive an algebraic formula we use all possible orderings and study homotopies relating different ordering perturbations. 

Consider the first step. We construct a perturbation that orders the negative boundary punctures of any curve in $\mathcal{F}^{I}_{1}$ (which is just a product with $\mathcal{F}_{1}^{s}\times I$ for any fixed $s\in I$) with negative punctures at chords in $C_{X_1}$. We need to carry out this perturbation energy level by energy level. Consider first the lowest action generator $\gamma$ of $H_1$ with action bigger than the chord at the positive puncture of the $(-2)$-disk. We perturb curves with positive puncture at $\gamma$ and with negative punctures at generators in $C_{X_1}$ by abstractly perturbing the Floer equation 
\[ 
(du-B_1)^{0,1}=0,
\]
near the negative punctures. Near chords and orbits in $C_{X_1}$ the data of the Floer equation is independent of both the $\R$-parameter and of $s\in I$. (Recall that the deformations are supported in the compact cobordism.) Thus, if the abstract time ordering perturbation is chosen sufficiently small then there are no $(d)$-curves for $d<0$ after perturbation and the moduli space of $(d)$-curves for $d\ge 0$ after abstract perturbation is canonically isomorphic to the corresponding moduli space before abstract perturbation. Assume that such a perturbation is fixed.

Let $\mathcal{G}(X_1,L_1)$ denote the set of generators of $\mathcal{SC}^{+}(X_1,L_1)$. For an element $\gamma\in \mathcal{G}(X_1,L_1)$ we write $d^{\varepsilon}_{1}\gamma$ for the sum of monomials that contribute to the differential of $\gamma$, i.e.~sum over $I$-components of the moduli spaces in $\mathcal{F}^{I}_{1}$, equipped with the additional structure of ordering of the generators as dictated by $\varepsilon$.

\begin{lemma}\label{lem:chainhomotopy_chord}
There is a map $K_{\varepsilon}\colon\mathcal{G}^{+}(X_1,L_1)\to \mathcal{SC}^{+}(X_0,L_0)$ such that for any generator $\gamma$ (chord or orbit),
\begin{equation}\label{eq:orderhomotopy}
\Phi_1(\gamma)-\Phi_0(\gamma)=\Omega_{K_\varepsilon}^{\varepsilon}(d_{1}^{\varepsilon}\gamma)+
d_0\Omega_{K_\varepsilon}^{\varepsilon}(\gamma).
\end{equation}
Here, $\Omega_{K_{\varepsilon}}^{\varepsilon}$ acts on monomials with an extra ordering of generators,  for an monomial of chords and orbits $\bo{\beta}=\beta_{1}\dots\beta_{k}$ we have
\begin{align*}
&\Omega_{K_\varepsilon}^{\varepsilon}(\bo{\beta})=\\ &\sum_{j=1}^{k}(-1)^{\tau_j}\Phi_{\sigma(1,j)}(\beta_1)\dots\Phi_{\sigma(1,j)}(\beta_{j-1})
K_{\varepsilon}(\beta_j)\Phi_{\sigma(j+1,j)}(\beta_{j+1})\dots\Phi_{\sigma(k,j)}(\beta_k),
\end{align*}
where $\tau_j=|\beta_1|+\dots+|\beta_{j-1}|$, $\sigma(i,j)=1$ if $\beta_i$ is before $\beta_j$ in the order perturbation $\varepsilon$ and $\sigma(i,j)=0$ if $\beta_i$ is after $\beta_j$.
\end{lemma}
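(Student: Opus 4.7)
The plan is to derive \eqref{eq:orderhomotopy} by a signed count of the boundary of one-dimensional components of the parameterized moduli space $\mathcal{F}_{\R}^{I}(\gamma;\bo{\eta})$, after the ordering perturbation $\varepsilon$ has been installed at negative punctures asymptotic to generators in $C_{X_1}$. First I would define $K_{\varepsilon}$ on a generator $\gamma$ by
$$
K_{\varepsilon}(\gamma)=\sum_{\bo{\eta}}\frac{1}{k!}\bigl|\mathcal{K}^{\varepsilon}(\gamma;\bo{\eta})\bigr|\,\bo{\eta},
$$
where $\mathcal{K}^{\varepsilon}(\gamma;\bo{\eta})\subset\mathcal{F}_{\R}^{I}(\gamma;\bo{\eta})$ is the zero-dimensional stratum of rigid $(-2)$-curves that appear at the unique critical moment $s_{\ast}\in(0,1)$ of the deformation, equipped with the $\varepsilon$-ordering of their negative punctures. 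The hypothesis that exactly one $(-2)$-curve appears in $[0,1]$ makes this count finite.

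Next, for fixed $\gamma$ and a target word $\bo{\eta}$, I would study the compactification of the one-dimensional component $\mathcal{M}_{1}\subset\mathcal{F}_{\R}^{I}(\gamma;\bo{\eta})$. By the parameterized analogue of Theorem~\ref{thm:mdli2}, $\mathcal{M}_{1}$ is a compact oriented $1$-manifold with boundary with corners whose boundary decomposes into four types: (i) endpoint contributions at $s\in\{0,1\}$, yielding $\Phi_{1}(\gamma)-\Phi_{0}(\gamma)$; (ii) broken configurations whose top level represents a summand of $d_{1}^{\varepsilon}\gamma$, with each of its negative punctures continued by a $\Phi$-type curve living at a generic value $s\neq s_{\ast}$; (iii) broken configurations whose bottom attachment is a $d_{0}$-type curve at one of the negative punctures; and (iv) configurations containing a rigid $(-2)$-curve level, contributing through $K_{\varepsilon}$.

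The ordering perturbation forces the $(-2)$-curve in type (iv) to glue at a well-defined position $j$ among the negative punctures of the top level, and passing the degree-$1$ operator $K_{\varepsilon}(\beta_{j})$ through $\beta_{1},\dots,\beta_{j-1}$ produces the Koszul sign $(-1)^{\tau_{j}}$. Since there is a unique critical moment $s_{\ast}$, each remaining negative puncture $\beta_{i}$ is continued by a curve living in one of the two components of $[0,1]\setminus\{s_{\ast}\}$; the $\varepsilon$-order dictates the assignment, so that punctures appearing before $\beta_{j}$ (with $\sigma(i,j)=1$) contribute $\Phi_{1}(\beta_{i})$, and those appearing after (with $\sigma(i,j)=0$) contribute $\Phi_{0}(\beta_{i})$. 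Assembling (ii) and (iv) reproduces $\Omega_{K_{\varepsilon}}^{\varepsilon}(d_{1}^{\varepsilon}\gamma)$, while (iii) assembles into $d_{0}\Omega_{K_{\varepsilon}}^{\varepsilon}(\gamma)$, and the signed boundary identity for $\mathcal{M}_{1}$ yields~\eqref{eq:orderhomotopy}.

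The hard part will be the orientational bookkeeping that justifies the precise assignment $\sigma(i,j)$ and the sign $(-1)^{\tau_{j}}$, which requires extending the coherent orientation scheme for the index bundles (as sketched at the end of Section~\ref{sec:superharmonic}) to the interpolation family $B_{\tau}^{s}$ near the $(-2)$-stratum and checking that gluing of the $(-2)$-curve at the $j$-th position is compatible with the chosen capping operators. A more substantive obstacle is that, in the presence of orbits, interior punctures admit no canonical ordering, so the perturbation $\varepsilon$ must be installed abstractly via an M-polyfold-type scheme compatible with splittings at several-level breaking, in the spirit of Remark~\ref{rmk:perturbations}. These technical points are taken on faith here; averaging over orderings to remove the dependence on $\varepsilon$ is the additional homotopy-of-homotopies step that will carry this lemma to Lemma~\ref{lem:chainhomotopy_orbit}.
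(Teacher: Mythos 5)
Your high-level strategy matches the paper's: extract the chain-homotopy formula from the signed boundary of the one-dimensional components of the parameterized moduli spaces $\mathcal{F}^I_\R$. But two mechanisms the paper's proof rests on are absent from your argument, and each is a genuine gap. First, your $K_\varepsilon$ counts only $(-2)$-curves ``at the unique critical moment $s_*$,'' whereas the paper defines $K_\varepsilon(\gamma)$ as the count of \emph{all} rigid curves of formal dimension $-2$ in $\mathcal{F}^I_\R$ \emph{after} the ordering perturbation has been installed. The perturbation starts from the degenerate situation where all the relevant negative punctures lie at the same time coordinate, and turning it on unavoidably creates new rigid $(-2)$-disks (spread over $I$, not concentrated at $s_*$); these newly created $(-2)$-disks must be included in $K_\varepsilon$, or the boundary count of the one-dimensional components will not close up to give \eqref{eq:orderhomotopy}.

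Second, the reason the boundary is described by $\Omega_{K_\varepsilon}^\varepsilon$ — a single $K_\varepsilon$-level glued at one position $j$, with $\Phi_1$ at punctures $\varepsilon$-before $j$ and $\Phi_0$ at those $\varepsilon$-after — is not simply that the punctures have been ordered. It relies on organizing the ordering perturbation energy level by energy level, so that the time separation of negative punctures of a curve with asymptotes in $C_{X_1}$ grows by a magnitude each time the energy of the positive puncture increases. This hierarchy of scales is what guarantees that the negative punctures of a given top-level curve pass the region occupied by $(-2)$-disks created on lower energy levels one at a time, never two simultaneously, so that at most one $(-2)$-level appears in any broken boundary configuration. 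Without this separation of scales, two $(-2)$-levels could attach to a single curve contributing to $d_1^\varepsilon\gamma$, and the putative chain-homotopy identity would acquire higher-order terms. Your assertion that ``the ordering perturbation forces the $(-2)$-curve to glue at a well-defined position $j$'' is precisely the conclusion that needs proving, and it does not follow merely from the uniqueness of the transverse $(-2)$-curve in $(0,1)$. (A smaller point: the symmetry normalization should be $1/m(\bo{\beta})!$ over orbit generators only, since chords do not commute and carry no combinatorial symmetry, rather than $1/k!$ over all letters of the word.)
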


\begin{proof}
Consider the parameterized moduli space
\[
\mathcal{F}_{\R}^{I}(\gamma;\bo{\beta})
\]
as above. Recall that $\mathcal{SC}^{+}(X_j,L_j)$ is defined as a direct limit using the action filtration corresponding to increasing slopes of Hamiltonians. We work below a fixed energy level with a fixed slope of our Hamiltonians and assume that the unique $(-2)$-disk forms a transversely cut-out $0$-manifold. 

We use the $(-2)$-disk to construct a chain homotopy. To this end we next extend the ordering perturbation $\varepsilon$ to all curves. Before we start the actual construction, we point out that our perturbation starts from the very degenerate situation where all negative boundary punctures lie at the same time. Thus one cannot avoid that new $(-2)$-disks arise when the perturbation is turned on. Gluing these to the perturbed moduli space of curves with negative asymptotes in $C_{X_1}$ then gives new $(-1)$-disks with positive puncture at $a$. We next show how to take these $(-1)$-curves into account.

We now turn to the description of the perturbation scheme. It is organized energy level by energy level in such a way that the size of the time separation of negative punctures of curves with positive and negative punctures in $C_{X_1}$ is determined by the action of the Reeb chord at the positive puncture. In particular the time distances between positive punctures of the newly created $(-2)$-disks at a given energy level are of the size of this time separation at this energy level. As we move to the next energy level, the time separation is a magnitude larger, so that one of the negative punctures of a disk on the new energy level passes all the positive punctures of the $(-2)$-disks created on lower energy levels before the puncture following it enters the region where $(-2)$-disks exists and can be attached to it.

Consider the parameterized $1$-dimensional moduli space $\mathcal{F}^{I}_{\R}(\gamma;\bo{\beta})$ of $(-1)$-disks defined using the perturbation scheme just described. The boundary of $\mathcal{F}^{I}_{\R}(\gamma;\bo{\beta})$ then consists of the $0$-manifolds $\mathcal{F}^{0}_{\R}(\gamma;\bo{\beta})$ and $\mathcal{F}^{1}_{\R}(\gamma;\bo{\beta})$ as well as broken disks that consist of one $(-2)$-disk at some $s\in I$ and several $(-1)$-disks with a $(0)$-disk in the upper or lower end attached. For a generator $\gamma$, let $K_{\epsilon}(\gamma)$ denote the count of $(-2)$-disks \emph{after} the ordering perturbation scheme described above is turned on:
\[
K_{\epsilon}(\gamma)=\sum_{|\gamma|-|\bo{\beta}|=-1}\frac{1}{m(\bo{\beta})!}\left|\mathcal{F}^{I}_{\R}(\gamma;\bo{\beta})\right|\bo{\beta},
\]
where $m(\bo{\beta})$ is the number of orbit generators in the monomial $\bo{\beta}$. To finish the proof we check that the $(-2)$ disks in the ordering perturbation scheme accurately accounts for the broken disks at the ends of the 1-dimensional moduli space. By construction, the separation of negative ends increases by a magnitude when we increase the energy level, and only one negative puncture of a curve in $\mathcal{F}^{I}_{1}$ can pass a $(-2)$-curve moment at a time. At the punctures which are ahead of this puncture with respect to $\varepsilon$, curves in $\Phi_1$ are attached, and at punctures which are behind it, curves in $\Phi_{0}$ are attached. Thus, counting the boundary points of oriented $1$-manifolds we conclude that~\eqref{eq:orderhomotopy} holds. 
\end{proof}

In order to express $\Phi_1$ in terms of $\Phi_0$ in an algebraically feasible way we discuss how the $(-2)$-disks counted by $K_\varepsilon$ depend on the choice of ordering perturbation $\varepsilon$. To this end we consider almost ordering perturbations $\varepsilon_u$, $u\in I$ which are time-ordering perturbations of the sort considered above of the negative ends of Floer curves in the positive end of the cobordism. Here an almost ordering is a true ordering except at isolated instances in $I$ when two ends are allowed to cross through with non-zero time derivative. Fix such a 1-parameter family and let $K_{\varepsilon_{0}}(\gamma)$ and $K_{\varepsilon_1}(\gamma)$ denote the count of $(-2)$-disks with positive puncture at $\gamma$ for the ordering perturbations $\varepsilon_0$ and $\varepsilon_1$. More precisely, we think of the whole 1-parameter family of moduli spaces associated to the orderings $\varepsilon_{u}$, $u\in I$. Consider now a path of paths corresponding to a disk $D$ interpolating between two orderings $\varepsilon_0$ and $\varepsilon_1$. More precisely, the boundary segment in the lower half plane in the boundary of the disk $D$ between $-1$ and $1$ corresponds to $\varepsilon_0$, and the boundary segment in the upper half plane corresponds to $\varepsilon_1$. 
The disk then interpolates between these two.

\begin{lemma}\label{l:nocross}
Generically there is a 1-dimensional locus $\Gamma$ in $D$ corresponding to $(-2)$-disks with transverse self-intersections and with boundary corresponding to $(-3)$-disk splittings, and at any $(-3)$-disk moment the path has a definite ordering (i.e.~no two negative ends are at the same time coordinate).
Furthermore, after deformation of $D$ we may assume that there are no self-intersections of $\Gamma$ (but that the disk still interpolates between the paths $\varepsilon_1$ and $\varepsilon_0$). 
\end{lemma}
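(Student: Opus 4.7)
The statement is a general-position result for a parametric Fredholm problem, and I would prove it by transversality in the Banach (or polyfold) framework for parametric Floer theory alluded to in Theorem \ref{thm:dim+tv} and Remark \ref{rmk:perturbations}.

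First, denote by $\mathcal{O}$ the (infinite-dimensional) space of almost orderings. For each topological type of Floer curve, consider the universal moduli $\mathcal{U}^{(d)}$ of $(d)$-curves over $\mathcal{O}\times I$, with the $I$-factor recording the interpolation parameter $\tau$. By Sard--Smale, $\mathcal{U}^{(d)}$ projects to a subset of $\mathcal{O}$ of codimension $-(d+1)$; in particular, the $(-2)$-curve locus is codimension $1$ and the $(-3)$-curve locus is codimension $2$ in $\mathcal{O}$. Pulling back by a generic $2$-disk $D\hookrightarrow\mathcal{O}$ therefore yields a $1$-submanifold $\Gamma\subset D$ together with isolated $(-3)$-disk points, and a standard double-point transversality argument makes the self-intersections of $\Gamma$ transverse.

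Second, I would verify that the only boundary strata of $\Gamma$ inside $D$ are the $(-3)$-disk splittings and that at each of them the ordering is definite. By parametric Gromov--Floer compactness (Theorem \ref{thm:mdli}), the ends of $\Gamma$ correspond either to codimension-$1$ breakings of a $(-2)$-curve into a $(-3)$-curve plus a rigid piece, or to degeneracies of the almost-ordering family (two crossings coinciding, or a crossing becoming tangent). The latter lie in a codimension-$1$ stratum of $\mathcal{O}$, and their intersection with the codimension-$2$ $(-3)$-disk stratum has codimension $3$; a generic $2$-disk $D$ avoids this combined stratum entirely, which gives the claim that at $(-3)$-disk moments the ordering is definite.

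Third, I would eliminate the self-intersections. The stratum of $\mathcal{O}$ supporting two distinct $(-2)$-disks has codimension $2$, so generic $2$-disks $D$ meet it in isolated transverse points. Around each such point, choose a small sub-disk $\Delta\subset\mathrm{int}(D)$ and replace $D|_{\Delta}$ by a nearby $2$-disk with the same boundary on $\partial\Delta$ that misses the codimension-$2$ bad locus; such replacements exist because the space of $2$-disks in $\mathcal{O}$ with prescribed boundary is an infinite-dimensional affine space whose generic members miss any fixed finite-codimension stratum. Since the modification is supported in the interior of $D$, the boundary data (the paths $\varepsilon_0$ and $\varepsilon_1$ on the two arcs of $\partial D$) are preserved. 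The main obstacle, as for Lemma \ref{lem:chainhomotopy_orbit} itself, is not any single dimension count but the availability of a perturbation scheme providing Banach/polyfold manifold structures on these parametric moduli and a genuine Sard--Smale theorem; the argument above should accordingly be read as a strategy within such a framework (cf.\ Remark \ref{rmk:perturbations}).
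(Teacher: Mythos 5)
The first two parts of your argument are essentially the paper's: you parameterize by orderings, observe that the $(-2)$-disk locus is codimension $1$ and the $(-3)$-disk locus is codimension $2$, and obtain $\Gamma$ by generic transversality together with Gromov--Floer compactness. The paper phrases this via finite-dimensional perturbations (paths in a finite-dimensional family of problems carrying the time coordinates of the negative ends) while you invoke a Banach/polyfold Sard--Smale framework, but that difference is cosmetic.

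Your third step, however, has a genuine gap. You claim that around each self-intersection of $\Gamma$ one can replace $D|_\Delta$ by a nearby disk with the same boundary $\partial\Delta$ that misses the codimension-$2$ locus, on the grounds that generic maps of a $2$-disk into an infinite-dimensional space avoid finite-codimension strata. That is false at the level of dimension. A self-intersection of $\Gamma$ is a transverse intersection of the $2$-disk $D$ with the codimension-$2$ stratum $\mathcal{V}$ of pairs of simultaneous $(-2)$-disks. The local intersection number at such a point is $\pm 1$, and it is a homotopy invariant of $D|_\Delta$ \emph{relative to} $\partial\Delta$: composing with the projection to a local $\mathbb{R}^2$-slice transverse to $\mathcal{V}$ gives a map $\Delta\to\mathbb{R}^2$ whose degree on $\partial\Delta$ cannot be changed without moving the boundary. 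Genericity makes the intersections transverse and isolated; it does not make them empty. So the replacement you posit cannot exist.

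The paper deals with this obstruction head on. It notes that closed components of $\Gamma$ bound sub-disks in $D$ and can be shrunk away, but the remaining self-intersections cannot be removed rel $\partial D$; instead they are \emph{pushed across} $\partial D$, which is a non-small deformation changing the boundary data. This creates two new transverse intersections of the $(-2)$-curve hypersurface with $\partial D$, i.e.\ two extra $(-2)$-disk moments along the boundary path, and then an energy-level/time-separation argument shows these two extra disks carry opposite signs and sit so close together that their contributions to the subsequent counts cancel, even though a third $(-2)$-disk lies between them. That cancellation argument (not avoidance of the stratum) is what actually makes the lemma go through, and it is absent from your proposal.
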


\begin{proof}
The first part of the lemma is a straightforward transversality result. View the ordering paths as paths in larger dimensional spaces of problems where time coordinates are associated to the negative ends. Choosing these finite dimensional perturbations generically there is a transversely cut out $(-2)$-curve hypersurface in the larger spaces. The $(-2)$-disks in $D$ now correspond to intersections of the $(-2)$-disk hypersurfaces with $D$ considered as paths of paths in the larger spaces. For generic $D$ this then gives a curve $\Gamma$ with a natural compactification and with normal crossings. Endpoints of $\Gamma$ correspond to one $(-3)$-curve breaking off. Double points of $\Gamma$ correspond to two $(-2)$-curves which can be attached at the same disk with negative punctures in $C_{X_1}$. 

We next deform the disk $D$ in order to remove the double points of $\Gamma$. This is straightforward: closed components of $\Gamma$ bound disks in $D$ and can hence be shrunk by isotopy. Intersections of other types can be pushed across the boundary of $D$. This push results in two new intersections between the $(-2)$-curve hypersurface and a component of $\pa D$. These two intersections correspond to two copies of the same $(-2)$-curve with opposite signs and can be taken to lie arbitrarily close to each other. There is a third $(-2)$-curve between these two copies. However, by our original choice of abstract ordering perturbations all these three disks have positive puncture at almost the same moment in the $1$-parameter family in $\pa D$. For curves along  $\pa D$ with negative ends where these disks can be attached, the time-separation of these negative ends is then larger than the separation between the two $(-1)$-disks of opposite signs, and hence their contributions cancel.
\end{proof}

Consider the two counts of $(-2)$-disks $K_{\varepsilon}$ and $K_{\tau}$ corresponding to two ordering perturbations $\varepsilon$ and $\tau$. Lemma \ref{l:nocross} shows that there is a disk $D$ in which the $1$-manifold of $(-2)$-disks is embedded. Furthermore, if there are no $(-3)$-curves in $D$ the $1$-manifold of $(-2)$-disks gives a cobordism between the $(-2)$-disks along the boundary arcs and in this case $K_{\varepsilon}=K_{\tau}$. Thus, in order to relate in the general case, we only need to study what happens when the ordering path crosses a $(-3)$-disk moment.  Moreover, there is a fixed ordering of negative ends $\varepsilon'=\varepsilon$ or $\varepsilon'=\tau$ mapping to orbits in $C_{X_1}$ at such moments. Our next result expresses this change algebraically.

\begin{lma}\label{l:Kchange}
In the above setup, there is an operator $K_{\varepsilon\tau}$ such that
\begin{equation}\label{e:Kchange}
K_{\varepsilon}(\gamma)-K_{\tau}(\gamma)=
\Omega_{K_{\varepsilon\tau}}^{\varepsilon'}(d_{1}^{\varepsilon'}(\gamma))+d_0(\Omega_{K_{\varepsilon\tau}}^{\varepsilon'}(\gamma)).
\end{equation}
\end{lma}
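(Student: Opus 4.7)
The plan is to iterate the argument of Lemma \ref{lem:chainhomotopy_chord} one level higher, using the $2$-disk $D$ from Lemma \ref{l:nocross} in place of the one-parameter interval $I$. By Lemma \ref{l:nocross}, the locus of $(-2)$-curves in the two-parameter family over $D$ forms an embedded oriented $1$-manifold $\Gamma\subset D$, whose boundary consists of intersections with the two boundary arcs (counted by $K_\varepsilon$ and $K_\tau$) together with finitely many interior endpoints at which a $(-3)$-disk splits off a $(0)$-disk. First I would define $K_{\varepsilon\tau}(\gamma)$ by summing over these interior $(-3)$-disk moments, weighted by $1/m(\bo{\beta})!$ over orbit asymptotes and with the usual coherent orientation signs. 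The degree of $K_{\varepsilon\tau}$ is one higher than that of $K_{\varepsilon}$, matching the left hand side of \eqref{e:Kchange} seen as a chain homotopy between the maps $K_\varepsilon$ and $K_\tau$.

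To extract the algebraic identity, I would extend the energy-ordered abstract perturbation scheme from the proof of Lemma \ref{lem:chainhomotopy_chord} to the two-parameter family over $D$. The crucial input supplied by Lemma \ref{l:nocross} is that at every interior $(-3)$-disk moment in $D$ the ordering of the negative ends landing in $C_{X_1}$ is definite; denote this common ordering by $\varepsilon'$ and construct a perturbation, recursively over energy levels, that imposes $\varepsilon'$ at every such moment and agrees with $\varepsilon$ and $\tau$ along the two boundary arcs. With this in place, the perturbed moduli space $\bar\Gamma$ of $(-2)$-disks in the two-parameter family over $D$ becomes a transversely cut-out oriented $1$-manifold with boundary, and the identity follows from $\partial\bar\Gamma=0$ by classifying the boundary points: intersections with $\partial D$ yield $K_\varepsilon(\gamma)-K_\tau(\gamma)$; interior endpoints of $\Gamma$ yield $(-3)+(0)$ configurations that combine, after summing over the branch carrying the $(0)$-disk and over the possible $\Phi_{\sigma(i,j)}$-disks on the unaffected branches, into the interior part of $\Omega^{\varepsilon'}_{K_{\varepsilon\tau}}(d_1^{\varepsilon'}\gamma)+d_0\Omega^{\varepsilon'}_{K_{\varepsilon\tau}}(\gamma)$; and codimension-one breakings of a $(-2)$-disk as $(-1)+(-2)$ supply the remaining $d_1^{\varepsilon'}$ or $d_0$ factors, according to whether the $(-1)$-piece sits above or below the $(-2)$-piece. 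Rearranging the resulting identity gives \eqref{e:Kchange}.

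The hard part is the inductive two-parameter perturbation scheme itself, especially for orbit asymptotes (interior punctures) which, unlike boundary chords on $L$, carry no natural linear ordering of the negative ends. As in Lemma \ref{lem:chainhomotopy_chord}, the time separations at a given action level have to be dictated by that level, growing by orders of magnitude between consecutive levels, so that only one negative puncture can cross a $(-3)$-disk moment at a time and so that the boundary orderings $\varepsilon,\tau$ along $\partial D$ extend over $D$ compatibly with the fixed ordering $\varepsilon'$ at every interior $(-3)$-disk moment. In line with the remark after Lemma \ref{lem:chainhomotopy_chord}, a fully rigorous implementation rests on a polyfold-type perturbation theory whose details are not developed here, and what is described should be viewed as a proof strategy in direct analogy with~\cite[Lemma B.15]{Ekholm_rsft}.
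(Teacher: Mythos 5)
Your strategy is essentially the same as the paper's (whose own proof occupies just two sentences): use the disk $D$ supplied by Lemma~\ref{l:nocross}, regard the $(-2)$-disk locus $\Gamma\subset D$ as a compact oriented $1$-manifold with boundary, read off $K_\varepsilon(\gamma)-K_\tau(\gamma)$ from $\Gamma\cap\partial D$, define $K_{\varepsilon\tau}$ from the interior endpoints of $\Gamma$ (the $(-3)$-disk instances, at which Lemma~\ref{l:nocross} guarantees a definite ordering $\varepsilon'$), and obtain~\eqref{e:Kchange} from the signed count $\partial\bar\Gamma=0$. You spell out the boundary stratification and the extension of the energy-ordered perturbation scheme over $D$ more explicitly than the paper, and you correctly flag, as the paper itself does, the dependence on an unconstructed abstract perturbation framework.

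One point worth re-examining: you attribute the $d_1^{\varepsilon'}$ and $d_0$ factors on the right-hand side to ``$(-1)+(-2)$'' breakings. In the codimension-one boundary of the oriented $1$-manifold of $(-2)$-disks, the generic broken configuration that produces the right-hand side consists of a rigid $(0)$-curve in $\mathcal{F}^+_1$ or $\mathcal{F}^+_0$ (contributing $d_1^{\varepsilon'}$ or $d_0$, respectively) joined to a rigid $(-3)$-curve (counted by $K_{\varepsilon\tau}$) with $(-1)$-curves ($\Phi$-factors) attached at the remaining punctures according to the fixed ordering $\varepsilon'$; a $(-1)+(-2)$ split with the $(-1)$-piece in the fixed-Hamiltonian moduli spaces $\mathcal{F}^+_1$ or $\mathcal{F}^+_0$ would be negative-dimensional and does not occur generically. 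This is a bookkeeping slip in the description of the boundary strata rather than an error in the overall strategy, which matches the paper's.
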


\begin{proof}
The difference between $K_{\varepsilon}(\gamma)$ and $K_{\tau}(\gamma)$ corresponds to the intersection of $D$ and the codimension $2$ variety of $(-3)$-disks. The corresponding split disks are accounted for by the terms in the right hand side of \eqref{e:Kchange}.
\end{proof}

\begin{proof}[Proof of Lemma \ref{lem:chainhomotopy_orbit}]
By Lemma \ref{lem:chainhomotopy_chord} we have
\[
\Phi_{1}(\gamma)-\Phi_0(\gamma)=
\Omega_{K_{\varepsilon}}^{\varepsilon}(d_1^{\varepsilon}\gamma)+d_0\Omega_{K_{\varepsilon}}^{\varepsilon}(\gamma),
\]
where $\varepsilon$ corresponds to any ordering perturbation. We first show that we can replace $K_{\varepsilon}$ in this formula with $K_{\tau}$ for any ordering perturbation $\tau$. To this end we use Lemma~\ref{l:Kchange} which shows that with $\tau$ as there and $\varepsilon=\varepsilon'$ (otherwise exchange the roles of $\tau$ and $\varepsilon$), we have:
\begin{align*}
\Omega_{K_{\varepsilon}-K_{\tau}}^{\varepsilon}(d_1^{\varepsilon}\gamma)+d_0\Omega_{K_{\varepsilon}-K_{\tau}}^{\varepsilon}(\gamma)=&\; \Omega_{K_{\varepsilon\tau}d_1^{\varepsilon}}^{\varepsilon}(d_1^{\varepsilon}\gamma)+
\Omega_{d_0K_{\varepsilon\tau}}^{\varepsilon}(d_1^{\varepsilon}\gamma)\\
&+d_0d_0K_{\varepsilon\tau}(\gamma)
+d_0\Omega_{K_{\varepsilon\tau}}^{\varepsilon}(d_1^{\varepsilon}\gamma).
\end{align*}
Here the third term in the right hand side vanishes. We study the sum of the remaining three terms in the right hand side. 

The operator $\Omega_{K_{\varepsilon\tau}d_{1}^{\varepsilon}}^{\varepsilon}$ acts as follows on monomials $\beta_1\dots\beta_k$: act by $d_1^{\varepsilon}$ on $\beta_j$, attach $K_{\varepsilon\tau}$ at one of the arising negative punctures, attach $\Phi_{0}$ at all punctures before this puncture in $\varepsilon$ and  $\Phi_1$ at all punctures after. The sum
\[
d_0\Omega_{K_{\varepsilon\tau}}^{\varepsilon}(d_1^{\varepsilon}\gamma)+
\Omega_{d_0K_{\varepsilon\tau}}^{\varepsilon}(d_1^{\varepsilon}\gamma)
\]
counts configurations of the following form: act by $d_1^{\varepsilon}$ on $\gamma$, attach $K_{\varepsilon\tau}$ at one of its negative punctures, attach $\Phi_0$ or $\Phi_1$ at all remaining punctures, according to the ordering $\varepsilon$, then act by $d_0$ at the resulting negative punctures that do not come from $K_{\varepsilon\tau}$. (The terms in which $d_0$ acts on negative punctures of $K_{\varepsilon\tau}$ are counted twice with opposite signs in the above sum and hence cancel.) Using the chain map property of $\Phi_j$ we rewrite this instead as first acting with $d_1$ on the positive puncture where $\Phi_j$ was attached and then attaching $\Phi_j$ (and also remove the $d_0$ at the corresponding negative ends). We thus conclude that we can write the sum of the remaining terms in the right hand side as follows:
\begin{align*}
\Omega_{K_{\varepsilon\tau}d_1^{\varepsilon}}^{\varepsilon}(d_{1}^{\varepsilon}\gamma)+
\Omega_{d_0K_{\varepsilon\tau}}^{\varepsilon}(d_1^{\varepsilon}\gamma)
+d_0\Omega_{K_{\varepsilon\tau}}^{\varepsilon}(d_1^{\varepsilon}\gamma)=\Omega_{K_{\varepsilon\tau}}^{\varepsilon}(d_{1}^{\varepsilon}d_1^{\varepsilon}\gamma)=0,
\end{align*}
where the first term in the left hand side counts the terms where $K_{\varepsilon\tau}$ is attached at a negative end in the lower level curve in $d_1^{\varepsilon}d_1^{\varepsilon}$ and the sum of the last two counts the terms where it is attached at a negative end in the upper level. To see that $d_{1}^{\varepsilon}d_1^{\varepsilon}\gamma=0$ note that it counts the end points of an oriented compact 1-manifold.

We thus find that
\[
\Omega_{K_{\varepsilon}}^{\varepsilon}(d_1^{\varepsilon}\gamma)+ d_0\Omega_{K_{\varepsilon}}^{\varepsilon}(\gamma)=
\Omega_{K_{\tau}}^{\varepsilon}(d_{1}^{\varepsilon}\gamma)+d_0\Omega_{K_{\tau}}^{\varepsilon}(\gamma).
\]

Using this formula successively and noticing that if there are no $(-3)$-disks $K$ does not change over $D$, we find that 
\[
\Omega_{K_{\tau}}^{\varepsilon}(d_1^{\epsilon}\gamma)+d_0\Omega_{K_{\tau}}^{\varepsilon}(\gamma)=
\Omega_{K_{\tau}}^{\tau}(d_1^{\tau}\gamma)+d_0\Omega_{K_{\tau}}^{\tau}(\gamma).
\]

Thus for a specific ordering perturbation $\varepsilon$ we can move all the $\Phi_{0}$-factors across and using the splitting repeatedly we express the right hand side of \eqref{eq:chhomtpy_orbit} as the sum over all $r$-level trees, $r\ge 0$. Here $r$-level trees are defined inductively as follows. A $0$-level tree is a $\Phi_0$-curve. A $1$-level tree is a curve contributing to $d_1^{\varepsilon}$ with a $(-2)$-curve attached at one of its negative punctures and $\Phi_0$-curves at all others. An $r$-level tree is a curve contributing to $d_1^{\varepsilon}$ with a $(-2)$-curve attached at one of its negative punctures. At punctures after that, trees with $<r$ levels are attached. 

By the above we may take the $(-2)$-disks $K_{\varepsilon}=K$ to be independent of the ordering perturbation chosen and averaging over all ordering perturbations then gives
\[
\Phi_1(\gamma)=\Phi_0\,e^{(K d_1-d_0K)}(\gamma),
\]
by definition of the exponential.
\end{proof}

\begin{cor}
The chain maps induced by deformation equivalent cobordisms are chain homotopic.
\hfill{$\square$} 
\end{cor}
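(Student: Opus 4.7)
The plan is to reduce the general case to the single $(-2)$-curve case established in Lemma~\ref{lem:chainhomotopy_orbit} by a perturbation-and-subdivision argument, and then compose the resulting chain homotopies.

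Given two cobordisms $(X_{10}^0,L_{10}^0)$ and $(X_{10}^1,L_{10}^1)$ joined by a deformation $s\mapsto (X_{10}^s,L_{10}^s)$, $s\in I$, I would first invoke a parametric transversality argument (in the same spirit as Theorem~\ref{thm:dim+tv} but now including the parameter $s$) to arrange that the $(-2)$-curve locus in the parameterized moduli space $\mathcal{F}_{\R}^{I}$ is a transversely cut-out $0$-manifold, consisting of finitely many isolated points $s_1<s_2<\dots<s_N$ in $(0,1)$. After a small additional perturbation, each $s_i$ can be assumed to carry a single transverse $(-2)$-curve. I would then choose a subdivision $0=t_0<t_1<\dots<t_{N+1}=1$ with each $s_i$ in the interior of some $(t_{j},t_{j+1})$ and with every subinterval containing at most one of the $s_i$.

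On a subinterval $[t_j,t_{j+1}]$ that contains no $(-2)$-curve, the $1$-dimensional part of $\mathcal{F}_{\R}^{[t_j,t_{j+1}]}$ is an oriented cobordism between the $0$-dimensional moduli spaces defining the cobordism maps at its endpoints, so $\Phi_{t_j}=\Phi_{t_{j+1}}$ on the chain level (the discussion preceding Lemma~\ref{lem:chainhomotopy_orbit}). On a subinterval containing exactly one $(-2)$-curve, Lemma~\ref{lem:chainhomotopy_orbit} produces a degree $+1$ map $K_j$ and a chain homotopy
\[
\Phi_{t_{j+1}}=\Phi_{t_j}\,e^{(K_j d_1-d_0 K_j)}.
\]
Iterating this identity over $j=0,\dots,N$ and using the trivial subintervals to identify consecutive maps yields
\[
\Phi_1=\Phi_0\,e^{(K_0 d_1-d_0 K_0)}\cdots e^{(K_N d_1-d_0 K_N)}.
\]

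The remaining step is to turn this composition of exponentials into a single chain homotopy of the form \eqref{eq:chhomtpy_orbit}, i.e.\ to exhibit an operator $K$ with $\Phi_1=\Phi_0\,e^{(Kd_1-d_0 K)}$. The cleanest way is to show that the chain-homotopy relation between DGA maps defined by \eqref{eq:chhomtpy_orbit} is transitive: given $K, K'$ realizing $\Phi_1=\Phi_0\,e^{(Kd_1-d_0K)}$ and $\Phi_2=\Phi_1\,e^{(K'd_1-d_1K')}$, the inner derivation $Kd_1-d_0K$ commutes appropriately with the Leibniz action, and a formal Baker--Campbell--Hausdorff computation (which terminates at the level of any given word because the operators strictly decrease the action) yields an explicit $K''$ with $\Phi_2=\Phi_0\,e^{(K''d_1-d_0K'')}$. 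Geometrically, one may alternatively realize $K''=K_0+\dots+K_N$ (up to correction terms forced by the non-commutativity of the attaching configurations) as the count of all $(-2)$-disks arising across the entire family, and verify directly that this operator satisfies \eqref{eq:chhomtpy_orbit} by repeating the boundary analysis of Lemmas~\ref{lem:chainhomotopy_chord} and~\ref{l:Kchange} for the whole parameter interval.

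The main obstacle is the last step: algebraically combining several exponential chain homotopies into a single one requires verifying that the relation \eqref{eq:chhomtpy_orbit} is genuinely an equivalence relation on DGA maps. The geometric realization of $K''$ makes this essentially automatic, but the bookkeeping for orientations and multiply covered asymptotes is the same delicate point encountered in Lemma~\ref{lem:chainhomotopy_orbit}, and it must be handled using the same ordering perturbation scheme and within the fixed abstract perturbation framework (see Remark~\ref{rmk:perturbations}).
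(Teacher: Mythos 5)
Your proposal matches the paper's implicit argument: the paper gives no separate proof of the Corollary (it is marked $\square$ immediately after Lemma~\ref{lem:chainhomotopy_orbit}), because the discussion preceding that Lemma already lays out the exact same reduction — perturb to general position so there are only finitely many transverse $(-2)$-curve moments, subdivide the parameter interval so each piece contains at most one, apply Lemma~\ref{lem:chainhomotopy_orbit} on each nontrivial piece, and compose. Your Step~3 concern about producing a \emph{single} operator $K$ satisfying \eqref{eq:chhomtpy_orbit} goes beyond what the Corollary asserts: the statement is only that the maps are chain homotopic, and chain homotopy of DGA maps is an equivalence relation, so transitivity disposes of the issue without any Baker--Campbell--Hausdorff computation or a new geometric count of $(-2)$-disks across the whole family. (Your observation that such a computation would terminate word-by-word by action reasons is correct, and the geometric realization you sketch would work, but neither is needed for the statement as given.) So the proposal is correct, and a touch more elaborate than necessary in its final step.
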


\subsection{Composition of cobordism maps}
We next study compositions of cobordism maps. Let $(X_{0},L_{0})$, $(X_1,L_1)$, and $(X_2,L_2)$ be Weinstein pairs, and let $(X_{01},L_{01})$ and $(X_{12},L_{12})$ be two cobordisms between $(X_{0},L_{0})$ and $(X_1,L_1)$ and between $(X_{1},L_{1})$ and $(X_2,L_2)$, respectively. We can then glue the cobordisms to form a cobordism $(X_{02},L_{02})$ from $(X_{0},L_{0})$ to $(X_{2},L_{2})$. This gives three cobordisms maps $\Phi_{01}$, $\Phi_{12}$, and $\Phi_{02}$ and we have the following result relating them:
\begin{theorem}
The chain maps $\Phi_{12}\circ\Phi_{01}$ and $\Phi_{02}$ are homotopic.
\end{theorem}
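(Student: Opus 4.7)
The plan is to exhibit a family of chain maps interpolating between $\Phi_{02}$ and $\Phi_{01}\circ\Phi_{12}$ by neck-stretching along the intermediate contact manifold $(Y_1,\Lambda_1)$. (Here we read the composition in the natural order so that targets and sources match: $\Phi_{01}\circ\Phi_{12}\colon\mathcal{SC}^{+}(X_2,L_2)\to\mathcal{SC}^{+}(X_0,L_0)$.) Insert a cylinder $[-R,0]\times(Y_1,\Lambda_1)$ into the glued cobordism $(X_{02},L_{02})$, producing a one-parameter family $(X_{02}^R,L_{02}^R)$ for $R\ge 0$, with $R=0$ giving back the original glued cobordism. On each $X_{02}^R$ I would fix a \emph{three-step} Hamiltonian $H^R$ generalizing the two-step construction of Section~\ref{sec:hamilt1form}: small Morse on the compact parts of $X_0$, $X_{01}$, $X_{12}$, with linear-then-constant behavior of intermediate slope $a_1$ on the inserted collar above $\{0\}\times Y_1$ (with $a_1$ distinct from all Reeb chord and orbit lengths of $(Y_1,\Lambda_1)$), the standard two-step shape near $Y_0$, and the standard one-step shape near the positive end. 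Together with a splitting-compatible field of almost complex structures and a splitting-compatible family of interpolation $1$-forms, this yields a cobordism chain map $\Phi_{02}^R$.

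For any finite $R$, the cobordism $(X_{02}^R,L_{02}^R,H^R)$ is deformation equivalent to $(X_{02},L_{02})$ with the original two-step data, so the corollary following Lemma~\ref{lem:chainhomotopy_orbit} gives $\Phi_{02}^R\simeq\Phi_{02}$. I would then analyze the limit $R\to\infty$ using the maximum principle of Lemma~\ref{lem:belowpos} (applied to each linear region of the Hamiltonian), the action decomposition of Lemma~\ref{lemma:actiondecomposition}, and the Gromov-Floer compactness of Theorem~\ref{thm:mdli2}. The relevant rigid moduli spaces of Floer curves on $X_{02}^R$ with high-energy asymptotes converge, as $R\to\infty$, to broken configurations of exactly two cobordism levels: a top-level curve in $X_{12}$, asymptotic at its positive puncture to the input generator in $X_2$ and at its negative punctures to high-energy generators in the intermediate neck $(Y_1,\Lambda_1)$, together with a bottom-level curve in $X_{01}$ glued at each such intermediate puncture and asymptotic at its negative punctures to generators in $X_0$.

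A combinatorial count of these broken configurations reproduces the algebraic expression $(\Phi_{01}\circ\Phi_{12})(a)$: the top-level contribution yields a word $\bo{\eta}\mathbf{b}=\Phi_{12}(a)$, and because $\Phi_{01}$ is a DGA map it attaches \emph{independent} $X_{01}$-curves at each intermediate generator. The factorials $1/k!$ in the definitions of $\Phi_{12}$, $\Phi_{01}$, and in the multiplicative extension combine correctly with the symmetry factors from repeated intermediate asymptotics. Thus on the chain level $\Phi_{02}^R$ converges to $\Phi_{01}\circ\Phi_{12}$ as $R\to\infty$, and combined with the deformation invariance of the previous paragraph we obtain $\Phi_{02}\simeq\Phi_{01}\circ\Phi_{12}$.

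The principal obstacle is the gluing and transversality bookkeeping needed to justify the limit: one must show that no unexpected broken configurations appear (for instance none with extra intermediate levels in the $(Y_1,\Lambda_1)$-symplectization that would invalidate the matching with the algebraic composition), and one must choose the abstract perturbation scheme of Remark~\ref{rmk:perturbations} coherently across the family so that the $(-2)$-curve analysis of Lemma~\ref{lem:chainhomotopy_orbit} applies uniformly in $R$. A careful sign check, based on the conventions of the Appendix and the behavior of the capping operators under the neck degeneration, is also required in order to identify the broken count with $\Phi_{01}\circ\Phi_{12}$ rather than with a sign-twisted version.
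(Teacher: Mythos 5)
Your proposal follows the same strategy as the paper: use a three-step Hamiltonian configuration on the glued cobordism with the intermediate slope supported near an inserted collar over $(Y_1,\Lambda_1)$, identify one extreme of the resulting parameter family with $\Phi_{02}$ and the other (separated) extreme with the composition, and conclude via the deformation invariance of Section~\ref{s:hmtpyofcobmaps}. The paper states this very tersely (citing Figure~\ref{fig:H-2-cob} and simply asserting that at maximal separation the map ``equals the composition''); you make the neck-stretching $R\to\infty$ limit and the ensuing breaking/compactness analysis explicit, and you correctly flag the two places where real work hides — that the stretching deformation is not compactly supported in the sense of Section~\ref{s:hmtpyofcobmaps}, and that the $(-2)$-curve perturbation scheme must be chosen coherently in $R$ — but the underlying argument is the same.
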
   

\begin{proof}
Use the moduli space with two interpolation regions, and three Hamiltonians as shown in Figure~\ref{fig:H-2-cob}. When the $Y_{j}$, $j=0,1,2$ are maximally separated we find that the induced map equals the composition of the two maps. At the other end, when they collide, we have the composite map. The results in Section \ref{s:hmtpyofcobmaps} imply that the maps are homotopic.
\end{proof}

\begin{corollary}
The DGA $\mathcal{SC}^{+}(X,L)$ is invariant under exact isotopies.
\end{corollary}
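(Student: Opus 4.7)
The plan is to deduce invariance from the composition theorem together with the fact that the trivial (product) cobordism induces a chain map homotopic to the identity. An exact isotopy between Weinstein pairs $(X,L)$ and $(X',L')$ produces, via the symplectic mapping cylinder construction, a Weinstein cobordism $(W,L_W)$ from $(X,L)$ to $(X',L')$; running the isotopy backwards yields a second Weinstein cobordism $(W',L_{W'})$ from $(X',L')$ to $(X,L)$. Each of these gives, by Theorem~\ref{t:2ndary}, a DGA map $\Phi_W\colon \mathcal{SC}^+(X',L')\to\mathcal{SC}^+(X,L)$ and $\Phi_{W'}\colon \mathcal{SC}^+(X,L)\to\mathcal{SC}^+(X',L')$, each well defined up to chain homotopy.

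Next I would glue $W$ and $W'$ in both orders. The concatenation $W'\cup W$ is, by construction, a Weinstein cobordism from $(X,L)$ to itself obtained by running the isotopy forward and then backward; it is Weinstein homotopic to the trivial (symplectization) cobordism on $(X,L)$ via the obvious deformation through the reparameterized isotopies $t\mapsto (X_{s\cdot t},L_{s\cdot t})$, $s\in[0,1]$. The composition theorem immediately above then gives
\[
\Phi_W\circ\Phi_{W'}\;\simeq\;\Phi_{W'\cup W}\;\simeq\;\Phi_{\mathrm{triv}},
\]
and symmetrically $\Phi_{W'}\circ\Phi_W\simeq\Phi_{\mathrm{triv}}$ for the trivial cobordism on $(X',L')$.

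It therefore remains to verify that the trivial cobordism induces the identity DGA map up to chain homotopy. The trivial cobordism is the symplectization piece $[-R,0]\times Y$ inserted in the end; one may choose a two step Hamiltonian $H_0$ whose restriction to the original Weinstein piece agrees with a one step Hamiltonian $H_1$ and whose second step simply translates the slope along $[-R,0]\times Y$. For this choice, the interpolation family $B_\tau$ of Section~\ref{sec:hamilt1form} becomes a homotopy between $B_0$ and $B_1$ through one step data with no critical $(-1)$-crossings apart from those of the differential itself, and the analysis of Section~\ref{s:hmtpyofcobmaps} (applied to the deformation that shrinks $R$ to $0$) shows that $\Phi_{\mathrm{triv}}$ is chain homotopic to the continuation map $\mathcal{SC}^+(X,L;H_1)\to\mathcal{SC}^+(X,L;H_1)$, which is the identity in the direct limit defining $\mathcal{SC}^+(X,L)$.

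Combining these statements, $\Phi_W$ and $\Phi_{W'}$ are mutually inverse DGA homotopy equivalences between $\mathcal{SC}^+(X,L)$ and $\mathcal{SC}^+(X',L')$, proving invariance under exact isotopies. The main obstacle in this argument is the last paragraph: one must control the $(-2)$-disk corrections appearing in Lemma~\ref{lem:chainhomotopy_orbit} for the concrete deformation connecting the trivial two step Hamiltonian to a one step Hamiltonian, and check that the ordering-perturbation terms collapse so that the resulting chain homotopy really produces the identity map in the direct limit; all other ingredients are direct applications of the composition theorem and Weinstein-homotopy invariance provided by Theorem~\ref{t:2ndary}.
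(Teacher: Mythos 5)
Your proposal is correct and takes essentially the same route as the paper: the paper's one-line proof also reduces to deforming the concatenation of the cobordism induced by the isotopy and its inverse to the trivial cobordism, then applying the chain-homotopy invariance and composition results of Section~\ref{s:hmtpyofcobmaps}. You simply spell out the implicit step that the trivial cobordism induces the identity in the direct limit, which the paper handles by referring back to the continuation-map framework; this is a welcome clarification but not a different argument.
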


\begin{proof}
Apply the homotopy of chain maps to the obvious deformation that takes the composition of the cobordism induced by an exact isotopy and its inverse to the trivial cobordism.
\end{proof}

\begin{figure}
         \begin{center}
\input{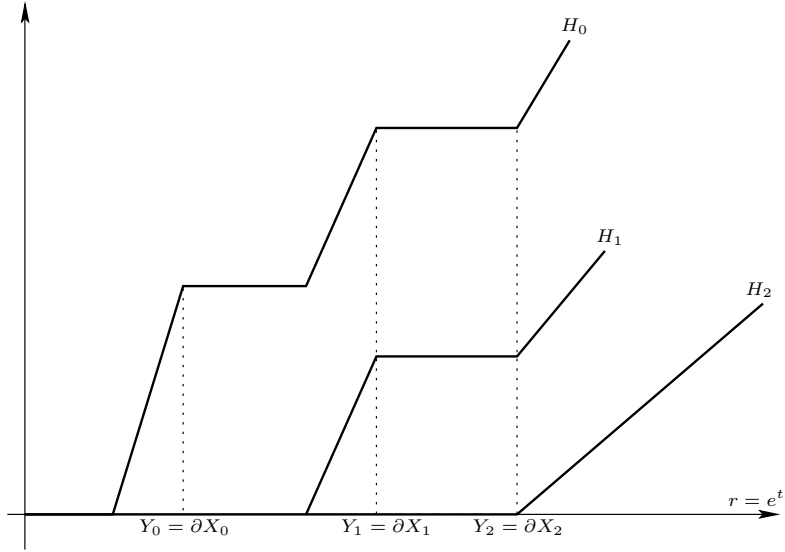}
         \end{center}
\caption{Hamiltonians for composition of cobordism maps.  \label{fig:H-2-cob}}
\end{figure}

\section{Isomorphism with contact homology} \label{sec:CHSH}
In this section we prove that the Hamiltonian simplex DGA $\mathcal{SC}^{+}(X,L)$ is quasi-isomorphic to the (non-equivariant) contact homology DGA $\mathcal{A}(Y,\Lambda)$ of its ideal boundary $(Y,\Lambda)$. 
The quasi-isomorphism is obtained using the cobordism map $\Phi$ in Section~\ref{s:fixHamcob} for vanishing Hamiltonian $H_1=0$. For the versions of contact homology where orbits and chords are not mixed this result implies that $\mathcal{SC}^{+}(L)$ is quasi-isomorphic to the Legendrian contact homology DGA of $\Lambda$, and that $\mathcal{SC}^{+}(X)$ is isomorphic to the (non-equivariant) contact homology DGA of $Y$. These results extend the corresponding isomorphisms between the high energy symplectic homology of $X$ and the non-equivariant linearized contact homology of $Y$~\cite{Bourgeois_Oancea_sequence}, or between the high energy wrapped Floer homology of $L$ and the linearized Legendrian homology of $\Lambda$, see e.g.~\cite{Ekholm_wrapped}, \cite[Theorem 7.2]{EHK}.

The non-equivariant orbit contact homology DGA is a natural generalization of the non-equivariant linearized contact homology, but is not described in the literature. We include a short description of the construction in Section~\ref{sec:nonequivDGA}. In Section~\ref{sec:equivDGA} we discuss the better known equivariant case that in our setup corresponds to the Hamiltonian simplex DGA associated to a time independent Hamiltonian. It should be mentioned that the transversality problems for the Floer equation corresponding to a time independent Hamiltonian are similar to the transversality problems for punctured holomorphic spheres in the symplectization end.    

\subsection{Non-equivariant contact homology orbit DGAs} \label{sec:nonequivDGA}
We give a brief description of non-equivariant contact homology. In essence this is simply a Morse-Bott theory for holomorphic disks and spheres with several negative interior punctures, where each Reeb orbit is viewed as a Morse-Bott manifold. (The chords are treated as usual, so our result for the Legendrian DGA is unaffected by the discussion here.)

Consider the contact manifold $Y$ which is the ideal contact boundary of $X$. To each Reeb orbit in $Y$ we will associate two decorated Reeb orbits $\hat{\gamma}$ and $\check{\gamma}$, see~\cite{Bourgeois_Oancea_sequence,BEE}. The gradings of these decorated orbits are  
\[  
|\check{\gamma}|=\CZ(\gamma)+(n-3)\quad \text{ and }\quad |\hat{\gamma}|=\CZ(\gamma)+(n-2).
\]
The differential in non-equivariant contact homology counts rigid Morse-Bott curves. These are several level holomorphic buildings where the asymptotic markers satisfy evaluation conditions with respect to a marked point on each Reeb orbit. Unlike in previous sections we here study curves in the symplectization. However, we still would like to use input from the filling. More precisely, as in~\cite{BEE,Bourgeois_Oancea_sequence} we will consider anchored curves. This means that all our curves have additional interior and boundary punctures where rigid holomorphic spheres and rigid holomorphic disks, respectively are attached. We will not mention the anchoring below but keep it implicit.

Recall first that in $D_{1;m,k}$ the positive boundary puncture determines an asymptotic marker at any interior negative puncture and that in $D_{0;0,k}$ any asymptotic marker at the positive puncture determines markers at all negative punctures. If $q$ is a puncture we write $\ev_{q}$ for the point on the Reeb orbit which is determined by the asymptotic marker. We next define Morse-Bott curves. 

Fix a point $x$ on each geometric Reeb orbit. A several level holomorphic curve with components $S_0,\dots, S_m$ with domain in $\mathcal{D}_{h;hm,k}$ is a \emph{Morse-Bott building} if the following holds:
\begin{itemize}
\item If the top level curve has a positive interior puncture $p$ then the following holds:
\begin{itemize}
\item If the asymptotic orbit is $\check{\gamma}$ then $\ev_{p}=x$.
\item If the asymptotic orbit is $\hat{\gamma}$ then $\ev_{p}$ is arbitrary.
\end{itemize}
\item 
For each component $S_j$ and for each negative interior puncture $q$ of $S_j$ the following holds:
\begin{itemize}
\item If there is a curve $S_m$ with positive interior puncture at $p$ attached to $S_j$ at $q$ then the oriented asymptotic Reeb orbit induces the cyclic order $(x,\ev_{q},\ev_{p})$ on the marked point and the two asymptotic markers.
\item If there is no curve attached at $q$ and the asymptotic orbit is $\hat{\gamma}$ then $\ev_{q}=x$.
\item If there is no curve attached at $q$ and the asymptotic orbit is $\check{\gamma}$ then $\ev_{q}$ is arbitrary.
\end{itemize}
\end{itemize}
  
Let $\mathcal{A}(Y,\Lambda)$ denote the graded unital algebra generated by the Reeb chords of $\Lambda$ and decorated Reeb orbits, where as above chords and decorated orbits sign commute and where decorated orbits sign commute with each other. The differential on $\mathcal{A}(Y,\Lambda)$ is given by a holomorphic curve count. Using notation analogous to the above we write $\mathcal{M}(a;\mathbf{b},\tilde{\bo{\eta}})$, $\mathbf{b}=b_1\dots b_m$, $\tilde{\bo{\eta}}=\tilde\eta_1\dots\tilde\eta_k$, where $\tilde\eta_j$ is a decorated orbit, for the moduli space of anchored Morse-Bott curves $u\colon D_{1;m,k}\to (\R\times Y,\R\times\Lambda)$ with positive boundary puncture where the map is asymptotic at $\infty$ to the holomorphic Reeb chord strip $\R\times a$, and $m$ negative boundary punctures and $k$ negative interior punctures where the map is asymptotic to  the Reeb chord strips $\R\times b_j$ and the Reeb orbit cylinders $\R\times\eta_j$ at $-\infty$. Similarly, we write $\mathcal{M}(\tilde{\gamma},\tilde{\bo{\eta}})$ for the moduli space of anchored Morse-Bott curves $u\colon D_{0;0,k}\to \R\times Y$ with positive interior puncture where the map is asymptotic at $\infty$ to the holomorphic Reeb orbit cylinder $\R\times \gamma$, and $k$ negative interior punctures where the map is asymptotic to the Reeb orbit cylinders $\R\times\eta_j$ at $-\infty$.

Define the differential $d$ to satisfy the Leibniz rule and to be given as follows on generators: for chords
\[ 
d a =\sum_{|a|-|\mathbf{b}|-|\tilde{\bo{\eta}}|=1}\frac{1}{k!}|\mathcal{M}(a;\mathbf{b},\tilde{\bo{\eta}})|\mathbf{b}\tilde{\bo{\eta}},
\]    
and for orbits
\[
d \tilde\gamma =\sum_{|\tilde\gamma|-|\tilde{\bo{\eta}}|=1}\frac{1}{k!}|\mathcal{M}(\tilde\gamma;\tilde{\bo{\eta}})|\tilde{\bo{\eta}}.
\]
Here $|\mathcal{M}|$ denotes a sign count of elements of a rigid moduli space with respect to a system of coherent orientations and $m(\bo{\eta})$ is the number of orbits in the monomial $\bo{\eta}$. See~\cite[\S4.4]{BOauto} for a discussion of orientations for fibered products that is relevant in the case at hand. Then, much like in Lemma~\ref{lma:secondarydelta2}, we have $d^{2}=0$.

\begin{remark}
Instead of using the Morse-Bott framework above, one can give an alternative definition of the non-equivariant DGA $\mathcal{A}(Y,\Lambda)$ by considering gluing compatible almost complex structures which are time-dependent and periodic in cylindrical end coordinates near interior punctures, i.e.~$J=J_t$, $t\in S^1$. The relevant moduli spaces would then have to be defined in terms of asymptotic incidence conditions determined by a choice of reference point on each periodic Reeb orbit. 
\end{remark}

We now set $H_1=0$ in the construction presented in Section~\ref{s:fixHamcob} and replace the cobordism there by the trivial cobordism, i.e.~the symplectization of $Y$. Define the algebra map $\Psi\colon \mathcal{A}(Y,\Lambda)\to\mathcal{SC}^+(X,L;H_0,J_0)$ as follows on generators: for chords
\[ 
\Psi(a)=\sum_{|a|-|\mathbf{b}|-|{\bo{\eta}}|=0} \frac 1 {k!} |\mathcal{F}_{\R}(a;\mathbf{b},\bo{\eta})|\,\bo{\eta}\mathbf{b},
\] 
and for orbits
\[
\Psi(\tilde\gamma)=\sum_{|\tilde\gamma|-|\bo{\eta}|=0} \frac 1 {k!} |\mathcal{F}_{\R}(\tilde{\gamma};\bo{\eta})|\,\bo{\eta}.
\]
Passing to the direct limit as the slope of $H_0$ goes to infinity we obtain an induced map 
$$
\Psi:\mathcal{A}(Y,\Lambda)\to\mathcal{SC}^+(X,L).
$$

\begin{theorem}\label{t:iso}
The induced map $\Psi\colon \mathcal{A}(Y,\Lambda)\to\mathcal{SC}^+(X,L)$ is a chain isomorphism.
\end{theorem}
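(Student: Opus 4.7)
The plan is to prove the theorem in two steps. First, I would verify that $\Psi$ is a chain map by the standard boundary analysis of the one-dimensional components of $\mathcal{F}_{\R}(a;\mathbf{b},\bo{\eta})$ and $\mathcal{F}_{\R}(\tilde\gamma;\bo{\eta})$: by Theorem~\ref{thm:mdli2} their boundaries decompose as broken configurations with one level consisting of a Morse-Bott building in the symplectization (since $H_1=0$) and the remaining levels consisting of Floer solutions with Hamiltonian $H_0$. Summing with signs over the boundary yields $\delta\circ\Psi=\Psi\circ d$.

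Second, I would show that $\Psi$ is an isomorphism by an action filtration argument. Both DGAs are filtered by action: on the contact side by the length $\int\alpha$ of the underlying Reeb chord/orbit, on the Floer side by the action~\eqref{eq:action}. Since $H_0$ can be chosen with arbitrarily small $C^0$-norm on the compact part and with slope at infinity avoiding the lengths of Reeb chords and orbits, each high-energy Hamiltonian chord/orbit is a reparameterization of a Reeb chord/orbit whose action is arbitrarily close to its length. Moreover, the time-dependent perturbation near each Reeb orbit $\gamma$ produces precisely two Hamiltonian orbits which, through the small Morse function on $\gamma$, are in canonical bijection with the decorations $\check\gamma,\hat\gamma$ of Section~\ref{sec:nonequivDGA}; each Reeb chord similarly produces one Hamiltonian chord. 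The non-positivity of the $1$-forms $B_\tau$ (Section~\ref{sec:superharmonic}) together with the action estimate of Section~\ref{sec:action_bound} shows that every curve contributing to $\Psi$ is action-decreasing, so $\Psi$ is upper triangular with respect to the action filtration and it suffices to compute its diagonal entries.

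For the diagonal, an action-preserving rigid element of $\mathcal{F}_{\R}$ must be contained in a level set $\{r=\mathrm{const}\}$ of the symplectization end and in the limit $\tau\to 0$ of the interpolation form degenerates into a configuration that is holomorphic in the symplectization together with a short Morse trajectory of the perturbation. A neck-stretching SFT-type analysis identifies these configurations with the trivial Reeb cylinders/strips decorated by the gradient flow data, and each contributes $\pm 1$, with the asymptotic marker conditions of Section~\ref{sec:nonequivDGA} exactly matching the $\hat{\phantom{\gamma}}$ versus $\check{\phantom{\gamma}}$ bookkeeping. Hence the map induced on the associated graded is the identity and $\Psi$ is a chain isomorphism. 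The main obstacle will be the Morse-Bott neck-stretching argument in the last step: one must show that as $\tau\to 0$ rigid solutions converge to anchored Morse-Bott buildings of the correct decoration type, and that gluing produces the expected count $\pm 1$ at each Reeb generator; this is a parametric SFT compactness and gluing problem for the family $B_\tau$, but it follows the same template as the corresponding linearized statement of \cite{Bourgeois_Oancea_sequence}, upgraded to the Morse-Bott DGA level.
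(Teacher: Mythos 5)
Your proposal follows the same two-step plan as the paper: chain-map via boundaries of one-dimensional moduli spaces $\mathcal{F}_{\R}$, then isomorphism via the action filtration with the diagonal given by trivial cylinders and strips. The paper's actual argument for the diagonal is considerably more terse than yours: for chords it simply observes that the rigid elements are reparameterizations of the trivial Reeb chord strip $\R\times c$ cut off in the $r$-slice where the corresponding Hamiltonian chord of $H_0$ sits (there is no $\tau\to 0$ degeneration — the relevant $\tau$ is pinned down by which $r$-slice matches the chord), and for orbits it delegates the decorated-orbit bookkeeping and the $\pm 1$ count entirely to \cite{Bourgeois_Oancea_sequence}. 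Your extra neck-stretching/SFT-degeneration paragraph is one way to justify that count, and you are right to flag it as the non-trivial step, but it is heavier machinery than the paper invokes; in the chord case in particular the direct identification with trivial strips suffices and avoids the parametric SFT compactness you worry about. Minor caution: upper-triangularity comes from the non-negativity of energy, i.e.\ $\mathfrak{a}(\text{positive})\ge\sum\mathfrak{a}(\text{negative})$ via Stokes and the non-positivity of $dB_\tau$; the phrase ``contained in a level set $\{r=\text{const}\}$'' for the energy-zero case is slightly off — zero energy forces $du=B_\tau$, which makes the curve a trivial Hamiltonian strip/cylinder, and that is the statement you need.
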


\begin{proof}
The fact that $\Psi$ is a chain map follows as usual by identifying contributing terms of $d\Psi-\Psi d$ with the endpoints of a $1$-dimensional moduli space. The isomorphism statement is a consequence of the fact that the Reeb chord strips or Reeb orbit cylinders contribute $1$, together with a standard action-filtration argument. For orbits, see ~\cite{Bourgeois_Oancea_sequence}. For chords the strips of the isomorphism are simply reparameterizations of trivial strips over the Reeb chord cut off in the $r$-slice where the corresponding Hamiltonian chord lies. 
\end{proof}

\subsection{Equivariance and autonomous Hamiltonians} \label{sec:equivDGA}
In order to relate the usual (equivariant) contact homology $\widetilde{\mathcal{A}}(Y,\Lambda)$ of the ideal contact boundary to a Hamiltonian simplex DGA we can use more or less the same argument as in the non-equivariant case. The starting point here is to set up an equivariant version of the Hamiltonian simplex DGA. To this end we use a time-independent one step Hamiltonian and define a version $\widetilde{\mathcal{SC}}^{+}(X,L)$ of the secondary symplectic DGA generated by unparameterized Hamiltonian orbits. To establish transversality for this theory one needs to use abstract perturbations. Assuming that such a perturbation scheme -- that also extends to curves in the symplectization with no Hamiltonian -- has been fixed, we can repeat the constructions of Section~\ref{sec:nonequivDGA} word by word to prove   

\begin{theorem}\label{t:isoequiv}
The map $\widetilde{\Psi}\colon \widetilde{\mathcal{A}}(Y,\Lambda)\to\widetilde{\mathcal{SC}}^+(X,L)$ is a chain isomorphism.
\end{theorem}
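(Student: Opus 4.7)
The plan is to imitate the argument used for Theorem~\ref{t:iso} line by line, the only new ingredient being that every moduli space has to be interpreted in the sense of a chosen abstract perturbation scheme. We therefore fix once and for all a coherent perturbation scheme (of polyfold, Kuranishi, or similar type) that (i) is defined for the Floer equation on $X$ associated to an autonomous one step Hamiltonian and an $\mathcal{E}$-family $B^\sigma$ of $1$-forms, (ii) is defined for the pseudo-holomorphic curve equation in the symplectization $\R\times Y$ in the Morse--Bott setup of Section~\ref{sec:nonequivDGA}, but now with the cyclic $S^1$-quotient of the evaluation incidences removed so as to produce the usual (equivariant) DGA $\widetilde{\mathcal{A}}(Y,\Lambda)$, and (iii) is compatible with splittings in the sense that perturbations glue coherently when $\tau\to 0$ in the interpolation form $B_\tau$.

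With such a scheme in place, the first step is to verify that $\widetilde{\Psi}$ is a DGA map and a chain map. Both properties are read off from the boundaries of suitable compactified $1$-dimensional moduli spaces exactly as in the proof of Theorem~\ref{thm:chainmap} and Theorem~\ref{t:iso}: the boundary of the parameterized moduli space $\mathcal{F}_{\R}$ (in the sense of Theorem~\ref{thm:mdli2}) consists on one side of broken configurations contributing to $d\circ \widetilde{\Psi}$ and on the other side of broken configurations contributing to $\widetilde{\Psi}\circ d$. That the DGA structures are respected (chord/orbit (anti)commutation and the Leibniz rule) is built into the algebra by construction, using the sign conventions recorded in Appendix~\ref{A:1}.

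The second step is to show $\widetilde{\Psi}$ is an isomorphism. Both complexes carry an action filtration: on the SFT side by the lengths of Reeb chords and orbits, on the Hamiltonian side by the action $\mathfrak a$ of the corresponding high-energy chords and orbits. The cobordism map $\widetilde{\Psi}$ preserves this filtration by the Stokes-type estimate used throughout the paper (non-positivity of $B_\tau$ forces a monotonicity between input and output actions). On the associated graded pieces, $\widetilde{\Psi}$ only sees trivial configurations: reparameterized trivial strips over Reeb chords (cut off at the appropriate $r$-slice) in the chord case, and reparameterized trivial Reeb orbit cylinders in the orbit case. Each of these is a rigid curve contributing $\pm 1$ with the chosen coherent orientations, and by an obvious model calculation the sign is $+1$. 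Hence the induced map on the associated graded is the identity and $\widetilde{\Psi}$ is an isomorphism of filtered complexes in each finite action window; passing to the direct limit over increasing slopes of the one step Hamiltonian gives the claim.

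The hard part will be step one, and specifically the transversality for the symplectization piece. Time independence of the Hamiltonian on the $X$ side and the usual $S^1$-symmetry on the SFT side both obstruct classical transversality, and this is exactly why one needs the abstract perturbation scheme mentioned in Remark~\ref{rmk:perturbations}. Provided the scheme is chosen so that the three compatibility requirements (i)--(iii) above are met, the identification of $1$-dimensional boundary strata with broken curves is standard and mirrors the proof of Theorem~\ref{t:iso} without modification; the identification of trivial cylinders and strips as the only contributions on the associated graded is then the same calculation as in~\cite{Bourgeois_Oancea_sequence}.
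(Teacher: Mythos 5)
Your proposal is correct and matches the paper's approach: the paper's own proof is literally ``Analogous to Theorem~\ref{t:iso},'' with the surrounding text already stipulating the time-independent Hamiltonian, the need for an abstract perturbation scheme extending to the symplectization, and a word-by-word repetition of Section~\ref{sec:nonequivDGA}. You have simply filled in the details the paper leaves implicit (the compatibility conditions on the perturbation scheme, the action-filtration argument with trivial strips/cylinders contributing $\pm 1$), which is exactly what ``analogous'' refers to.
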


\begin{proof}
Analogous to Theorem \ref{t:iso}.
\end{proof}

\section{Examples and further developments} \label{sec:examples}
In this section we first discuss examples where the Hamiltonian simplex DGA is known via the isomorphism to contact homology. Then we discuss how the theory can be generalized to connect Hamiltonian Floer theory to other parts of SFT.

\subsection{Knot contact homology.}
Our first class of examples comes from Legendrian contact homology. By Theorem~\ref{t:iso}, this corresponds to the chord case of our secondary DGA. 

Given a knot $K\subset S^3$, one considers its conormal bundle $\nu K\subset X=T^*S^3$. This is an exact Lagrangian that is conical at infinity, that has vanishing Maslov class, and whose wrapped Floer homology $WH(\nu K)$ was shown in~\cite{APS} to be equal to the homology of the space $\mathcal{P}_K S^3$ of paths in $S^3$ with endpoints on $K$, i.e. 
$$
WH(\nu K)\simeq H(\mathcal{P}_K S^3). 
$$
One can prove that the homotopy type of the space $\mathcal P_K S^3$ does not change as the knot is deformed in a $1$-parameter family possibly containing immersions. Since any two knots in $S^3$ can be connected by a path that consists of embeddings except at a finite number of values of the deformation parameter, where it consists of immersions with a single double point, we infer that $\mathcal P_K S^3$ has the same homotopy type as $\mathcal P_U S^3$, where $U\subset S^3$ is the unknot. As a matter of fact, the homotopy equivalence can be chosen to be compatible with the evaluation maps at the endpoints, showing that $H(\mathcal P_K S^3)\simeq H(\mathcal P_U S^3)$ as algebras with respect to the Pontryagin-Chas-Sullivan product. Moreover, we also have isomorphisms $H(\mathcal P_K S^3,K)\simeq H(\mathcal P_U S^3,U)$ induced by homotopy equivalences. We infer that $WH(\nu K)$ and its high-energy version $\mathcal{WH}^+(\nu K)\simeq H(\mathcal P_K S^3,K)$ are too weak as invariants in order to distinguish knots.  

In contrast, for the superficially different case $K\subset \mathbb{R}^3$, the Legendrian contact homology of $\nu K$, also called \emph{knot contact homology}, was proved in~\cite{EENS} to coincide with the combinatorial version of~\cite{Ngframed} and, as such, to detect the unknot. Theorem~\ref{t:iso} can be extended in a straightforward way to cover the case of $T^*\mathbb{R}^3$ in order to show that Legendrian contact homology of $\nu K$ is isomorphic to the homology of the Hamiltonian simplex DGA $\mathcal{SC}^+(\nu K)$. In particular, the operations on wrapped Floer homology corresponding to the differential on $\mathcal{SC}^+(\nu K)$ are rich and interesting. This contrasts to the naive higher co-products which are rather trivial. In terms of $\mathcal{P}_{K}$ the operations of the Hamiltonain simplex DGA correspond to fixing points on the paths with endpoints on $K$ and then averaging over the locations of these points. This gives a string topological interpretation of knot contact homology, where chains of strings split as the strings cross the knot as studied in~\cite{CELN}. 

As a final remark, the coefficient ring of knot contact homology involves a relative second homology group that in the unit cotangent bundle of $\R^{3}$ contains also the class of the fiber, which is killed in the full cotangent bundle. This extra variable is key to the relation between knot contact homology and the topological string, see \cite{AENV}, and indicates that it would be important to study the extension of the theory described in the current paper to a situation where the contact data at infinity does not have any symplectic fillings.

\subsection{$A_\infty$, $L_\infty$, and the diagonal}
As already mentioned in the introduction, the Hamiltonian simplex DGA $\mathcal{SC}^+(X)$ in the orbit case can be viewed as the cobar construction on the vector space generated by the high-energy orbits, viewed as an $\infty$-Lie coalgebra with the sequence of operations $(d_1,d_2,\dots)$. Note that $\infty$-Lie coalgebras are dual to $L_\infty$, or $\infty$-Lie algebras. 

In a similar vein, given a Lagrangian $L\subset X$ the Hamiltonian simplex DGA $\mathcal{SC}^+(L)$ in the chord case can be viewed as the cobar construction on the vector space generated by the high-energy chords, viewed as an $\infty$-coalgebra, a type of structure that is dual to $A_\infty$-algebras. 

It turns out that one can produce an $\infty$-algebra structure in the orbit case by implementing exactly the same construction subject to the additional condition that all punctures lie on a circle on the sphere. This condition is invariant under conformal transformations and yields well-defined moduli spaces, which effectively appear as submanifolds inside the moduli space that define the operations on $\mathcal{SC}^+(X)$. The resulting DGA is not an $\infty$-Lie coalgebra, but simply an $\infty$-coalgebra. 

Doubling chords to orbits and holomorphic disks to holomorphic spheres with punctures on a circle using Schwarz reflection, it is straightforward to show that that the resulting DGA coincides with $\mathcal{SC}^+(\Delta_X)$, the Hamiltonian simplex DGA of the Lagrangian diagonal $\Delta_X\subset X\times X$. This fact parallels the well-known isomorphism between periodic Hamiltonian Floer homology and Lagrangian Floer homology of the diagonal.

This example shows in particular that the relationship between the Hamiltonian simplex DGAs in the closed and in the open case is subtler than its linear counterpart. 

\subsection{Chern class, Maslov class, and exactness}
We discuss in this section some of the standing assumptions in the paper. 

A first set of assumptions imposed in Section~\ref{S:opendga} is that $\pi_1(X)=0$, $c_1(X)=0$, $\mu_L=0$. These are the simplest technical assumptions under which the theory is $\mathbb{Z}$-graded. If $\pi_1(X)=0$ but $c_1$ or $\mu_L$ are non-zero, the closed theory would be graded modulo the positive generator of $c_1(X)\cdot H_2(X)$, and the open theory would be graded modulo the positive generator of $\mu_L\cdot H_2(X,L)$. There are also ways to dispose of the condition $\pi_1(X)=0$ at the expense of possibly further weakening the grading, see the discussion in~\cite{EGH}. 

A standing assumption of a quite different and much more fundamental kind is that the manifold $X$ and the Lagrangian $L$ be exact. This is a simple way to rule out \emph{a priori}\,Êthe bubbling-off of pseudo-holomorphic spheres in $X$, respectively of pseudo-holomorphic discs with boundary on $L$. The advantage of this simple setup is that it allows us to focus on the new algebraic structure. The theory would need to be significantly adapted should one like to consider non-exact situations.

\subsection{Further developments}
At a linear level, $S^1$-equivariant symplectic homology is obtained from its non-equivariant counterpart using (an $\infty$-version of) the $BV$-operator~\cite{BOcontact-equivariant}. The $BV$-operator is an operation governed by the fundamental class of the moduli space of spheres with two punctures and varying asymptotic markers at the punctures. Note that this moduli space is homeomorphic to a circle and the $BV$-operator has degree $+1$ as a homological operation, which corresponds to the fact that the fundamental class of the moduli space lives in degree $1$. It was proved in~\cite{BOcontact-equivariant} that the high-energy, or positive part of $S^1$-equivariant symplectic homology recovers linearized cylindrical contact homology of the contact boundary $Y$. One advantage of the $S^1$-equivariant point of view over the symplectic field theory (SFT) point of view is that it does not require any abstract perturbation theory. 

\begin{question}
What is the additional structure on the non-equivariant Hamiltonian simplex DGA $\mathcal{SC}^+(X)$ that allows to recover the equivariant Hamiltonian simplex DGA described in Section~\ref{sec:equivDGA}?
\end{question}  

Though one can construct an $\infty$-version of the BV-operator in the DGA setting that we consider in this paper by methods similar to those of~\cite{BOcontact-equivariant}, it is not clear whether this is enough in order to recover the equivariant DGA from the non-equivariant one. It may be that one needs more information coming from the structure of an algebra over the operad of framed little $2$-disks that exists on any Hamiltonian Floer theory. 

From the point of view of SFT, the natural next steps are to understand the algebraic structure that is determined on Hamiltonian Floer theory by moduli spaces of genus $0$ curves with an arbitrary number of positive punctures, respectively by moduli spaces of curves with an arbitrary number of positive punctures and arbitrary genus. These would provide in particular non-equivariant analogues of the rational SFT and full SFT.

\appendix
\section{Determinant bundles and signs}\label{A:1}
In this appendix we give a more detailed discussion of how the sign rules of the Hamiltonian simplex DGA derive from orientations of determinant bundles. The material here has been discussed at many places in this context, see for example~\cite[\S11]{Seidel-book}, \cite[Appendix~A.2]{MS04}, \cite{Bourgeois-Mohnke}, \cite{Zinger-det}, \cite{FO3}, \cite{EES_ori}. 

If $V$ is a finite dimensional vector space then $\Lambda^{\max}V=\Lambda^{\dim\, V}V$ is its highest exterior power. For the $0$-dimensional vector space, $\Lambda^{\max}(0)=\R$. If
\[
\begin{CD}
0 @>>> V_1 @>{f_1}>> V_2 @>{f_2}>> \dots @>{f_n}>> V_{n+1} @>>> 0
\end{CD}
\]
is an exact sequence of finite dimensional vector spaces then there is a canonical isomorphism
\[
\bigotimes_{k \text{ odd}}\Lambda^{\max}V_k \  \ \cong \ \ \bigotimes_{k \text{ even}}\Lambda^{\max}V_k
\]
that \emph{does} depend on the maps $f_1,\dots,f_n$. For example, if $\dim\, V_1$ is odd and the map $f_1$ is changed to $-f_1$, then the isomorphism changes sign. 

If $X$ and $Y$ are Banach spaces and $D\colon X\to Y$ is a Fredholm operator, then the \emph{determinant line $\det(D)$ of $D$} is the 1-dimensional vector space 
\[
\det(D)=\Lambda^{\max}(\coker D)^*\otimes \Lambda^{\max}\ker D.
\]
We think of $\det(D)$ as a graded vector space supported in degree $\ind(D)$. 

We next discuss stabilization. We first stabilize in the source. Let $D\colon X\to Y$ be a Fredholm operator, $V$ a finite dimensional real vector space and $\Phi\colon V\to Y$ a linear map. The \emph{stabilization of $D$ by $\Phi$} is the Fredholm operator $D^V=D\oplus\Phi\colon X\times V\to Y$, $(x,v)\mapsto Dx+\Phi v$. The exact sequence 
$$
0\longrightarrow \ker D \longrightarrow \ker D^V\longrightarrow V\stackrel{\Phi}\longrightarrow \coker D\longrightarrow \coker D^V\to 0
$$
gives a canonical isomorphism  
$$
\det(D^V)\cong \det D\otimes \Lambda^{\max}V, 
$$
that depends on the map $\Phi$. For example, if the map $\Phi$ changes sign and if $\dim\coker D-\dim\coker D^V$ is odd, then the isomorphism changes sign. 

Similarly we can stabilize in the target. If $W$ is a finite dimensional vector space and $\Psi\colon X\to W$ is a continuous linear map, then with $D_{W}=(D,\Psi)\colon X\to Y\times W$, $x\mapsto (Dx,\Psi x)$ we get 
$$
0\longrightarrow \ker D_W \longrightarrow \ker D\stackrel{\Psi}\longrightarrow W\longrightarrow \coker D_W\longrightarrow \coker D\to 0,
$$
which gives a canonical isomorphism 
$$
\det(D_W)\cong  (\Lambda^{\max} W)^{\ast}\otimes\det D. 
$$
that depends on $\Psi$. For example, if the map $\Psi$ changes sign and $\dim\ker D - \dim\ker D_W$ is odd, then the isomorphism changes sign. 

Finally, combining the two if $\alpha\colon V\to W$ is a linear map then for the map $D^{V}_{W}\colon X\times V\to Y\times W$, $D^{V}_{W}(x,v)=(Dx+\Phi v, \Psi x + \alpha v)$ gives a canonical isomorphism
\begin{equation}\label{eq:detiso}
\det(D_W^{V})\cong (\Lambda^{\max} W)^{\ast}\otimes\det D\otimes \Lambda^{\max} V 
\end{equation}
that depends on $\Phi$, $\Psi$, and $\alpha$.

\begin{remark}
For the isomorphism above one also needs to specify conventions for orientations of direct sums corresponding to stabilizations. The details of these conventions do however not affect our discussion here.
\end{remark}

If $D\in \mathcal{F}(X,Y)$ then, by stabilizing in the domain, one may make all operators in a neighborhood of $D$ surjective and that together with the above isomorphism allows for the definition of a locally trivial line bundle $\underline{\det}\to\mathcal{F}(X,Y)$ over the space of Fredholm operators acting from $X$ to $Y$ with fiber over $D$ equal to $\det(D)$.

Assume $\mathcal{D}\colon \mathcal{O}\to \mathcal{F}(X,Y)$ is a continuous map defined on some topological space $\mathcal{O}$. Consider the pull-back bundle $\mathcal{D}^*\underline{\det}\to \mathcal{O}$ and note that it admits a trivialization provided the first Stiefel-Whitney class vanishes, $w_1(\mathcal{D}^*\underline{\det})=0$.

If $V$ and $W$ are finite dimensional vector spaces, we consider in line with the discussion above the bundle $\mathcal{O}^{V}_{W}=\mathcal{O}\times \Hom(V,Y)\times \Hom(X,W)\times \Hom(V,W)$ and the map $\mathcal{D}^{V}_{W}\colon \mathcal{O}^{V}_{W}\to\mathcal{F}(X\times V,Y\times W)$ defined as follows: $\mathcal{D}^{V}_{W}(p,\Phi,\Psi,\alpha)$ is the linear map which takes $(x,v)\in X\times V$ to
\[ 
(\mathcal{D}(p)x+\Phi v,\Psi x+\alpha v)\in X\times W.
\]
Using the natural base point in $\Hom(V,Y)\times \Hom(X,W)\times \Hom(V,W)$ given by $(\Phi,\Psi,\alpha)=0$ and the natural isomorphism \eqref{eq:detiso}, we transport orientations of $\Lambda^{\max}\underline{W}^*\otimes \mathcal{D}^*\underline{\det}\otimes\Lambda^{\max}\underline{V}\to\mathcal{O}$ to orientations of $(\mathcal{D}^{V}_{W})^*\underline{\det}\to\mathcal{O}^{V}_{W}$, and back. Here $\underline V\to \mathcal{O}$ and $\underline{W}\to \mathcal{O}$ denote the trivial bundles $\mathcal{O}\times V\to\mathcal{O}$ and $\mathcal{O}\times W\to\mathcal{O}$ respectively.

We now apply this setup to spaces of (stabilized) Cauchy-Riemann operators used in the definition of the Hamiltonian simplex DGA. Indeed, the linearized operator for our family of Cauchy-Riemann equations parameterized by the simplex $\Delta^{m-1}$ is of the type $D^{T_w\Delta^{m-1}}$. Since $T_w\Delta^{m-1}=\ker \ell_{\nu}$ with $\ell_{\nu}\colon \R^m\to \R$, $\ell_{\nu}(\zeta)=\langle \zeta,\nu\rangle$, where $\nu$ is the vector $\nu=(1,1,\dots, 1)$, we have a canonical isomorphism $\det(D^{T_w\Delta^{m-1}})\simeq \det(D^{\R^m}_{\R})$, with $D^{\R^m}_{\R}(x,\zeta)=(D^{T_w\Delta^{m-1}}(x,\pi\zeta),\ell_{\nu}(\zeta))$ and $\pi\colon \R^m\to T_w\Delta^{m-1}$ the orthogonal projection parallel to $\nu$. We can thus view the linearization of our parameterized Cauchy-Riemann problem as an element of a suitable space $\mathcal{O}^{\R^m}_{\R}$ of Fredholm operators of Cauchy-Riemann type. 

The negative orbit and chord capping operators $o_-(\gamma)$, $o_-(c)$ belong to natural spaces $\mathcal{O}_-(\gamma)$, $\mathcal{O}_-(c)$ of Cauchy-Riemann operators with fixed asymptotic behavior determined by the linearized Hamiltonian flow along $\gamma$ and respectively $c$, acting between appropriate Sobolev spaces of sections $W^{1,p}\to L^p$, $p>2$ (see~\cite[\S4.4]{BOauto} for the orbit case and~\cite[\S11]{Seidel-book}, \cite[\S9]{Abouzaid-Seidel} for the chord case). These spaces of Cauchy-Riemann operators with fixed asymptotes are contractible, and consequently the determinant line bundle can be trivialized over each of them. We similarly define natural spaces of Cauchy-Riemann operators $\mathcal{O}_+(\gamma)$, $\mathcal{O}_-(c)$ containing the positive orbit and chord capping operators $o_+(\gamma)$, $o_+(c)$. 

Our procedure for the construction of coherent orientations for the parameterized Cauchy-Riemann equation is then the following. 

\begin{itemize}
\item[(i)]
Given the canonical orientation on $\C$, we orient the determinant bundles over the spaces $\mathcal{O}(\C P^1)$ of Cauchy-Riemann operators over $\C P^1$ by the canonical orientation of complex linear operators. Since all the Euclidean spaces $\R^n$ are canonically oriented, this induces orientations of the determinant bundles over all spaces $\mathcal{O}^{\R^k}_{\R^\ell}(\C P^1)$ for arbitrary $k,\ell\in\Z_{\ge 0}$. 

Similarly, following \cite{FO3}, the choice of a relative spin structure on the Lagrangian $L$ determines an orientation of the determinant bundle over all spaces of Cauchy-Riemann operators $\mathcal{O}(D^2)$ defined on the pull-back of $TX$ over the disk $D^2$ by arbitrary smooth maps $u\colon(D^2,\p D^2)\to (X,L)$, with totally real boundary conditions given by $u|_{\p D^2}^*TL$. 
This then induces orientations of the determinant bundles over all spaces $\mathcal{O}^{\R^k}_{\R^\ell}(D^2)$ for arbitrary $k,\ell\in\Z_{\ge 0}$. 

\item[(ii)] We choose orientations of the determinant lines over the spaces $\mathcal{O}_-(\gamma)$, $\mathcal{O}_-(c)$, which determine in turn orientations of the determinant lines over all spaces ${\mathcal{O}_-}^{\R}_{0}(\gamma)$, ${\mathcal{O}_-}^{\R}_{0}(c)$. 

\item[(iii)] By gluing, we obtain orientations of the determinant lines over the spaces $\mathcal{O}_+(\gamma)$, $\mathcal{O}_+(c)$, which determine in turn orientations of the determinant lines over all spaces ${\mathcal{O}_+}^{\R}_{0}(\gamma)$, ${\mathcal{O}_+}^{\R}_{0}(c)$. 

\item[(iv)] If $\mathbf{b}=b_1\dots b_m$ is a word of Hamiltonian chords and $\bo{\eta}=\eta_1\dots\eta_k$ a word of Hamiltonian orbits, then we write
\[ 
{\mathcal{O}_+}^{\R}_{0}(\mathbf{b},\bo{\eta})={\mathcal{O}_+}^{\R}_{0}(b_1)\times\dots\times {\mathcal{O}_+}^{\R}_{0}(b_m)\times {\mathcal{O}_+}^{\R}_{0}(\eta_1)\times\dots\times {\mathcal{O}_+}^{\R}_{0}(\eta_k)
\]
and 
\[  
{\mathcal{O}_+}^{\R}_{0}(\bo{\eta})={\mathcal{O}_+}^{\R}_{0}(\eta_1)\times\dots\times {\mathcal{O}_+}^{\R}_{0}(\eta_k).
\] 
Given a Hamiltonian chord $a$, we write $\mathcal{O}(a;\mathbf{b},\bo{\eta})$ for the space of Cauchy-Riemann operators defined on a punctured disc with one positive boundary puncture, $m$ negative boundary punctures, and $k$ negative interior punctures, with Lagrangian boundary conditions given by the pull-back of $TL$ via a map on the disk into $X$ with boundary in $L$, and with asymptotic behavior at the punctures according to the Hamiltonian chords and orbits $a$, $\mathbf{b}$, and $\bo{\eta}$. Similarly, given a Hamiltonian orbit $\gamma$ we write $\mathcal{O}(\gamma;\bo{\eta})$ for the space of Cauchy-Riemann operators defined on a sphere with one positive puncture and $k$ negative punctures, and with asymptotic behavior at the punctures determined by the linearized flow along the Hamiltonian orbits $\gamma$, $\bo{\eta}$. We then have spaces $\mathcal{O}^{\R^{m+k+1}}_{0}(a;\mathbf{b},\bo{\eta})$ and $\mathcal{O}^{\R^{k+1}}_{0}(\gamma;\bo{\eta})$, respectively $\mathcal{O}^{\R^{m+k}}_{\R}(a;\mathbf{b},\bo{\eta})$ and $\mathcal{O}^{\R^k}_{\R}(\gamma;\bo{\eta})$. 
\item[(v)] Cauchy-Riemann operators which are stabilized by finite dimensional spaces at the source 
can be glued much like usual, i.e.~non stabilized, Cauchy-Riemann operators, see e.g.~\cite[Section 4.3]{ESm}. The gluing operations 
$$
{\mathcal{O}_-}^{\R}_{0}(a)\times\mathcal{O}^{\R^{m+k+1}}_{0}(a;\mathbf{b},\bo{\eta})\times {\mathcal{O}_+}^{\R}_{0}(\mathbf{b},\bo{\eta})\longrightarrow \mathcal{O}^{\R^{2(m+k+1)}}_{0}(D^2)
$$
and 
$$
{\mathcal{O}_-}^{\R}_{0}(\gamma)\times \mathcal{O}^{\R^{k+1}}_{0}(\gamma;\bo{\eta})\times {\mathcal{O}_+}^{\R}_{0}(\bo{\eta})\longrightarrow\longrightarrow \mathcal{O}^{\R^{2(k+1)}}_{0}(\C P^1)
$$
induce isomorphisms of determinant bundles which are canonical up to homotopy. From our previous choices we obtain orientations of all the spaces $\mathcal{O}^{\R^{m+k+1}}_{0}(a;\mathbf{b},\bo{\eta})$ and $\mathcal{O}^{\R^{k+1}}_{0}(\gamma;\bo{\eta})$. After restricting to the slice given by the zero stabilization map, we obtain as explained above orientations of all the spaces $\mathcal{O}^{\R^{m+k}}_{\R}(a;\mathbf{b},\bo{\eta})$ and $\mathcal{O}^{\R^{k}}_{\R}(\gamma;\bo{\eta})$. These orientations are used in order to count  rigid holomorphic curves with signs in the relevant moduli spaces. 
\end{itemize}

Our choice of coherent orientations gives the following graded commutativity property. As in Section~\ref{sec:Floermodulispaces}, let $|c|=\CZ(c)-2$ and $|\gamma|=\CZ(\gamma)+n-3$. Let $\mathbf{b}=b_1\dots b_m$, $\bo{\eta}=\eta_1\dots\eta_k$ be words in Hamiltonian chords and orbits respectively as above. Consider spaces of stabilized Cauchy-Riemann operators $\mathcal{O}^{\R^{m+k+1}}_{0}(a;\mathbf{b},\bo{\eta})$ and $\mathcal{O}^{\R^{k+1}}_{0}(\gamma;\bo{\eta})$ for Hamiltonian chords and orbits $a$ and $\gamma$. Given $1\le i\le k-1$ denote $\bo{\eta}^i=\eta_1\dots\eta_{i-1}\eta_{i+1}\eta_i\eta_{i+2}\dots\eta_k$. There are canonical identifications
\[ \mathcal{O}^{\R^{m+k+1}}_{0}(a;\mathbf{b},\bo{\eta})\cong\mathcal{O}^{\R^{m+k+1}}_{0}(a;\mathbf{b},\bo{\eta}^i) 
\]  
and 
\[
\mathcal{O}^{\R^{k+1}}_{0}(\gamma;\bo{\eta})\cong \mathcal{O}^{\R^{k+1}}_{0}(\gamma;\bo{\eta}^i) 
\]
obtained by relabeling the $i$-th and $(i+1)$-th interior punctures of the domain. Accordingly, the determinant line bundles over these spaces of operators are canonically identified. Each of them comes with an induced orientation as above, and these orientations differ by the sign 
$$
(-1)^{|\eta_i||\eta_{i+1}|}.
$$
Indeed, these orientations differ by the same sign as the orientations of the determinant lines over ${\mathcal{O}_+}^{\R}_{0}(\gamma_i)\times\mathcal{O}^{\R}_{0}(\gamma_{i+1})$ and ${\mathcal{O}_+}^{\R}_{0}(\gamma_{i+1})\times\mathcal{O}^{\R}_{0}(\gamma_i)$, identified via the obvious exchange of factors. The latter sign is equal to~\cite[p.150]{Seidel-book} 
$$
(-1)^{\ind({D^{\R}_{0}}_i)\times\ind({D^{\R}_{0}}_{i+1})}=(-1)^{|\eta_i||\eta_{i+1}|},
$$
where ${D^{\R}_{0}}_i\in {\mathcal{O}_+}^{\R}_{0}(\gamma_i)$ and ${D^{\R}_{0}}_{i+1}\in {\mathcal{O}_+}^{\R}_{0}(\gamma_{i+1})$. This holds because 
$$
\ind({D^{\R}_{0}}_i)=\CZ(\eta_i)+n+1\equiv |\eta_i| (\mathrm{mod}\, 2)
$$ 
and 
$$
\ind({D^{\R}_{0}}_{i+1})=\CZ(\eta_{i+1})+n+1\equiv |\eta_{i+1}| (\mathrm{mod}\, 2),
$$ 
see Section~\ref{sec:Floermodulispaces}.

This shows that orbits sign commute in the Hamiltonian simplex DGA of Section~\ref{S:opendga}.

\begin{remark} \label{rmk:sign_commute} 
In the Hamiltonian simplex DGA of Section~\ref{S:opendga} orbits sign commute with chords. That is \emph{not} a consequence of coherent orientations. It is just an algebraic choice that reflects the interpretation of orbits as coefficients for the algebra generated by chord generators. Indeed, we can always order the negative punctures of a holomorphic curve by first considering boundary punctures and then considering interior punctures (analogous to normal ordering of operators). 
\end{remark}

\bibliographystyle{abbrv}
\bibliography{000_refs_sh_ch}	

\begin{thebibliography}{10}

\bibitem{APS}
A.~Abbondandolo, A.~Portaluri, and M.~Schwarz.
\newblock The homology of path spaces and {F}loer homology with conormal
  boundary conditions.
\newblock {\em J. Fixed Point Theory Appl.}, 4(2):263--293, 2008.

\bibitem{AS}
A.~Abbondandolo and M.~Schwarz.
\newblock On the {F}loer homology of cotangent bundles.
\newblock {\em Comm. Pure Appl. Math.}, 59(2):254--316, 2006.

\bibitem{Abouzaid-cotangent}
M.~Abouzaid.
\newblock Symplectic cohomology and {V}iterbo's theorem.
\newblock arXiv:1312.3354, 2013.

\bibitem{Abouzaid-Seidel}
M.~Abouzaid and P.~Seidel.
\newblock An open string analogue of {V}iterbo functoriality.
\newblock {\em Geom. Topol.}, 14(2):627--718, 2010.

\bibitem{AENV}
M.~Aganagic, T.~Ekholm, L.~Ng, and C.~Vafa.
\newblock Topological strings, {D}-model, and knot contact homology.
\newblock {\em Adv. Theor. Math. Phys.}, 18(4):827--956, 2014.

\bibitem{Bourgeois-pik}
F.~Bourgeois.
\newblock Contact homology and homotopy groups of the space of contact
  structures.
\newblock {\em Math. Res. Lett.}, 13(1):71--85, 2006.

\bibitem{BEE}
F.~Bourgeois, T.~Ekholm, and Y.~Eliashberg.
\newblock Effect of {L}egendrian surgery.
\newblock {\em Geom. Topol.}, 16(1):301--389, 2012.
\newblock With an appendix by Sheel Ganatra and Maksim Maydanskiy.

\bibitem{BEHWZ}
F.~Bourgeois, Y.~Eliashberg, H.~Hofer, K.~Wysocki, and E.~Zehnder.
\newblock Compactness results in symplectic field theory.
\newblock {\em Geom. Topol.}, 7:799--888, 2003.

\bibitem{Bourgeois-Mohnke}
F.~Bourgeois and K.~Mohnke.
\newblock Coherent orientations in symplectic field theory.
\newblock {\em Math. Z.}, 248(1):123--146, 2004.

\bibitem{Bourgeois_Oancea_sequence}
F.~Bourgeois and A.~Oancea.
\newblock An exact sequence for contact- and symplectic homology.
\newblock {\em Invent. Math.}, 175(3):611--680, 2009.

\bibitem{BOauto}
F.~Bourgeois and A.~Oancea.
\newblock Symplectic homology, autonomous {H}amiltonians, and {M}orse-{B}ott
  moduli spaces.
\newblock {\em Duke Math. J.}, 146(1):71--174, 2009.

\bibitem{BOcontact-equivariant}
F.~Bourgeois and A.~Oancea.
\newblock {$S^1$}-equivariant symplectic homology and linearized contact
  homology.
\newblock arXiv:1212.3731, 2012.

\bibitem{CEL}
K.~Cieliebak, T.~Ekholm, and J.~Latschev.
\newblock Compactness for holomorphic curves with switching {L}agrangian
  boundary conditions.
\newblock {\em J. Symplectic Geom.}, 8(3):267--298, 2010.

\bibitem{CELN}
K.~Cieliebak, T.~Ekholm, J.~Latschev, and L.~Ng.
\newblock Knot contact homology, string topology, and the chord algebra.
\newblock In preparation.

\bibitem{Cieliebak-Eliashberg}
K.~Cieliebak and Y.~Eliashberg.
\newblock {\em From {S}tein to {W}einstein and back. {S}ymplectic geometry of
  affine complex manifolds}, volume~59 of {\em American Mathematical Society
  Colloquium Publications}.
\newblock American Mathematical Society, Providence, RI, 2012.

\bibitem{CFHW}
K.~Cieliebak, A.~Floer, H.~Hofer, and K.~Wysocki.
\newblock Applications of symplectic homology. {II}. {S}tability of the action
  spectrum.
\newblock {\em Math. Z.}, 223(1):27--45, 1996.

\bibitem{Ekholm_rsft}
T.~Ekholm.
\newblock Rational symplectic field theory over {$\mathbb{Z}\sb 2$} for exact
  {L}agrangian cobordisms.
\newblock {\em J. Eur. Math. Soc. (JEMS)}, 10(3):641--704, 2008.

\bibitem{Ekholm_wrapped}
T.~Ekholm.
\newblock Rational {SFT}, linearized {L}egendrian contact homology, and
  {L}agrangian {F}loer cohomology.
\newblock In {\em Perspectives in analysis, geometry, and topology}, volume 296
  of {\em Progr. Math.}, pages 109--145. Birkh\"auser/Springer, New York, 2012.

\bibitem{EES_ori}
T.~Ekholm, J.~Etnyre, and M.~Sullivan.
\newblock Orientations in {L}egendrian contact homology and exact {L}agrangian
  immersions.
\newblock {\em Internat. J. Math.}, 16(5):453--532, 2005.

\bibitem{EES}
T.~Ekholm, J.~Etnyre, and M.~Sullivan.
\newblock Legendrian contact homology in {$P\times\mathbb{R}$}.
\newblock {\em Trans. Amer. Math. Soc.}, 359(7):3301--3335 (electronic), 2007.

\bibitem{EENS}
T.~Ekholm, J.~B. Etnyre, L.~Ng, and M.~G. Sullivan.
\newblock Knot contact homology.
\newblock {\em Geom. Topol.}, 17(2):975--1112, 2013.

\bibitem{EHK}
T.~Ekholm, K.~Honda, and T.~K{\'a}lm{\'a}n.
\newblock Legendrian knots and exact {L}agrangian cobordisms.
\newblock {\em JEMS}, to appear, arXiv:1212.1519.

\bibitem{ESm}
T.~Ekholm and I.~Smith.
\newblock Exact {L}agrangian immersions with one double point revisited.
\newblock {\em Math. Ann.}, 358(1-2):195--240, 2014.

\bibitem{EGH}
Y.~Eliashberg, A.~Givental, and H.~Hofer.
\newblock Introduction to symplectic field theory.
\newblock {\em Geom. Funct. Anal.}, (Special Volume, Part II):560--673, 2000.
\newblock GAFA 2000 (Tel Aviv, 1999).

\bibitem{FO3}
K.~Fukaya, Y.-G. Oh, H.~Ohta, and K.~Ono.
\newblock {\em Lagrangian intersection {F}loer theory: anomaly and obstruction.
  {P}arts {I}, {II}}, volume 46.1, 46.2 of {\em AMS/IP Studies in Advanced
  Mathematics}.
\newblock American Mathematical Society, Providence, RI, 2009.

\bibitem{Goresky_Hingston}
M.~Goresky and N.~Hingston.
\newblock Loop products and closed geodesics.
\newblock {\em Duke Math. J.}, 150(1):117--209, 2009.

\bibitem{Hatcher}
A.~Hatcher.
\newblock {\em Algebraic topology}.
\newblock Cambridge University Press, Cambridge, 2002.

\bibitem{HWZ}
H.~Hofer, K.~Wysocki, and E.~Zehnder.
\newblock sc-smoothness, retractions and new models for smooth spaces.
\newblock {\em Discrete Contin. Dyn. Syst.}, 28(2):665--788, 2010.

\bibitem{HWZ-book1}
H.~Hofer, K.~Wysocki, and E.~Zehnder.
\newblock Polyfold and {F}redholm {T}heory {I}: {B}asic {T}heory in
  {M}-{P}olyfolds.
\newblock arXiv:1407.3185, 2014.

\bibitem{KSV}
T.~Kimura, J.~Stasheff, and A.~A. Voronov.
\newblock On operad structures of moduli spaces and string theory.
\newblock {\em Comm. Math. Phys.}, 171(1):1--25, 1995.

\bibitem{MS04}
D.~McDuff and D.~Salamon.
\newblock {\em {$J$}-holomorphic curves and symplectic topology}, volume~52 of
  {\em American Mathematical Society Colloquium Publications}.
\newblock American Mathematical Society, Providence, RI, 2004.

\bibitem{Ngframed}
L.~Ng.
\newblock Framed knot contact homology.
\newblock {\em Duke Math. J.}, 141(2):365--406, 2008.

\bibitem{Pardon}
J.~Pardon.
\newblock An algebraic approach to virtual fundamental cycles on moduli spaces
  of {$J$}-holomorphic curves.
\newblock arXiv:1309.2370, 2013.

\bibitem{Ritter}
A.~F. Ritter.
\newblock Topological quantum field theory structure on symplectic cohomology.
\newblock {\em J. Topol.}, 6(2):391--489, 2013.

\bibitem{RS}
J.~Robbin and D.~Salamon.
\newblock The {M}aslov index for paths.
\newblock {\em Topology}, 32(4):827--844, 1993.

\bibitem{SW}
D.~Salamon and J.~Weber.
\newblock Floer homology and the heat flow.
\newblock {\em Geom. Funct. Anal.}, 16(5):1050--1138, 2006.

\bibitem{Seidel_biased}
P.~Seidel.
\newblock A biased view of symplectic cohomology.
\newblock In {\em Current developments in mathematics, 2006}, pages 211--253.
  Int. Press, Somerville, MA, 2008.

\bibitem{Seidel-book}
P.~Seidel.
\newblock {\em Fukaya categories and {P}icard-{L}efschetz theory}.
\newblock Zurich Lectures in Advanced Mathematics. European Mathematical
  Society (EMS), Z\"urich, 2008.

\bibitem{Viterbo-loop}
C.~Viterbo.
\newblock Functors and computations in {F}loer homology with
  applications.~{II}.
\newblock Pr\'epublication Orsay 98-15, 1998. Available online at
  http://www.math.ens.fr/$\sim$viterbo/FCFH.II.2003.pdf.

\bibitem{Viterbo99}
C.~Viterbo.
\newblock Functors and computations in {F}loer homology with applications. {I}.
\newblock {\em Geom. Funct. Anal.}, 9(5):985--1033, 1999.

\bibitem{Zinger-det}
A.~Zinger.
\newblock The determinant line bundle for {F}redholm operators: construction,
  properties, and classification.
\newblock arXiv:1304.6368, 2013.

\end{thebibliography}

\end{document}